\documentclass[brochure,english,11pt]{smfbourbaki}

\usepackage[T1]{fontenc}
\usepackage[utf8]{inputenc}
\usepackage{lmodern,amssymb,amsmath,amsfonts,bm}
\usepackage[french,main=english]{babel}
\usepackage{amsmath, amssymb, amsfonts, amsthm, amsbsy}
\usepackage{enumerate}
\usepackage[dvipsnames]{xcolor}
\usepackage[unicode]{hyperref}
\hypersetup{
	urlcolor=blue,
	colorlinks=true,
	linkcolor= blue,
	citecolor=blue
}
\usepackage{comment, stmaryrd}
\usepackage[all]{xy}
\usepackage{tikz}
\usetikzlibrary{fit,shapes,calc,arrows,through,intersections,arrows.meta,decorations.pathmorphing}
\usepackage{tkz-euclide}
\theoremstyle{plain}
\newtheorem{theorem}{Theorem}[section]
\newtheorem{proposition}[theorem]{Proposition}
\newtheorem{lemma}[theorem]{Lemma}
\newtheorem{corollary}[theorem]{Corollary}

\newtheorem{fact}[theorem]{Fact}

\newtheorem{conjecture}[theorem]{Conjecture}
\theoremstyle{definition}
\newtheorem{example}[theorem]{Example}
\newtheorem{definition}[theorem]{Definition}
\newtheorem{question}[theorem]{Question}

\theoremstyle{remark}
\newtheorem{remark}[theorem]{Remark}

\newtheorem{notation}[theorem]{Notation}

\newenvironment{new-abstract}[1]
  {\bigskip\selectlanguage{#1}%
   \begin{center}\bfseries\abstractname\end{center}}
  {\par\bigskip}
\newcommand\define[1]{{\bf #1}}

\providecommand{\bfGamma}{\boldsymbol{\Gamma}}
\newcommand\fraki{\mathfrak{i}}

\newcommand\Th{\mathrm{Th}}
\newcommand\res{\mathrm{res}}
\newcommand\val{\mathrm{val}}
\newcommand\ac{\mathrm{ac}}
\newcommand\bfres{\mathbf{res}}
\newcommand\bfval{\mathbf{val}}

\newcommand\Lang{\mathfrak{L}}
\newcommand\Lring{\Lang_{\mathrm{ring}}}
\newcommand\Lval{\Lang_{\val}}
\newcommand\Lvalpi{\Lang_{\val}(\varpi)}
\newcommand\Lac{\Lang_{\ac}}
\newcommand\Loag{\Lang_{\mathrm{oag}}}

\newcommand\Llambda{\Lang_{\lambda}}

\newcommand\Lvlambda{\Lang_{\val,\lambda}}

\newcommand\Lk{\Lang_{\mathbf{k}}}
\newcommand\LGamma{\Lang_{\bfGamma}}
\newcommand\Lof{\Lang_{\mathrm{of}}}
\newcommand\Ldagger{\Lang_{p,e}}
\newcommand\squaretall\blacklozenge
\renewcommand{\square}{\mathbin{\rotatebox[origin=c]{-90}{$\squaretall$}}}
\newcommand\NN{\mathbb{N}}
\newcommand\ZZ{\mathbb{Z}}
\newcommand\QQ{\mathbb{Q}}
\newcommand\RR{\mathbb{R}}
\newcommand\CC{\mathbb{C}}
\newcommand\FF{\mathbb{F}}
\newcommand\PP{\mathbb{P}}

\newcommand\bfK{\mathbf{K}}
\newcommand\bfk{\mathbf{k}}
\newcommand\bfG{\bfGamma}
\newcommand\Gps[2]{{#1(\!(#2)\!)}}
\newcommand\iGps[2]{{#1[\![#2]\!]}}
\newcommand\Hs[2]{{\Gps{#1}{t^{#2}}}}
\newcommand\ps[1]{{\Gps{#1}{t}}}
\newcommand\ips[1]{{\iGps{#1}{t}}}
\newcommand\red{\mathrm{red}} % reduction
\newcommand\AKEtext{AKE principle}
\newcommand\AKEtextS{AKE principles}
\newcommand\AKEtransfertext{Ax--Kochen/Ershov transfer principle}
\newcommand{\AKE}{\mathrm{AKE}}
\newcommand{\sAKE}{\mathrm{s}\AKE}
\newcommand{\OAG}{\mathbf{OAG}} %%% OAGs
\newcommand{\Fth}{\mathbf{F}} %%% Fields
\newcommand{\VF}{\mathbf{VF}} %%% Valued fields
\newcommand{\ACF}{\mathbf{ACF}} %%% Algebraically closed fields
\newcommand{\RCF}{\mathbf{RCF}} %%% Real closed fields
\newcommand{\ACVF}{\mathbf{ACVF}} %%% Algebraically closed valued fields
\newcommand{\TVF}{\mathbf{TVF}} %%% Separably tame valued fields
\newcommand{\STVF}{\mathbf{STVF}} %%% Separably tame valued fields
\newcommand{\eq}{\mathrm{eq}}
\renewcommand{\H}{\mathbf{H}} %%% Henselian valued fields
\newcommand{\Hz}{\H^{0}} %%% Henselian valued fields of equal characteristic zero
\newcommand\Mod{\mathbf{Mod}}
\newcommand\perf{\mathrm{perf}}
\newcommand\ult{\mathfrak{u}}
\newcommand\ulim{\mathrm{ulim}}
\newcommand\sep{\mathrm{Sep}}
\newcommand\hens{\mathrm{h}}
\renewcommand\hens{{h}}
\newcommand\supp{\mathrm{supp}}
\newcommand\mred{\leq_{m}}
\newcommand\meq{\simeq_{m}}
\newcommand\Cperf{\mathbf{C}^{\mathrm{Perf}}} %%% PERFECTOID
\newcommand\CAT{\mathbf{C}^{\mathrm{AT}}} %%% ALMOST TAME
\newcommand\CST{\mathbf{C}^{\mathrm{ST}}} %%% SEMI-TAME
\newcommand\CAST{\mathbf{C}^{\mathrm{AST}}} %%% ALMOST SEPARABLY TAME
\newcommand\TAT{\mathbf{AT}} %%% ALMOST TAME
\newcommand\TST{\mathbf{ST}} %%% SEMI-TAME
\newcommand\ring[1]{#1^{\circ}}
\newcommand\ideal[1]{#1^{\circ\circ}}
\newcommand\Ring[1]{\mathcal{O}_{#1}}
\newcommand\Ideal[1]{\mathfrak{m}_{#1}}
\newcommand\tilt[1]{#1^{\flat}}

\newcommand\fet{\textbf{-fét}}
\usepackage[
backend=biber,
style=authoryear, 
citestyle=authoryear-comp,
maxnames=5
]{biblatex}
\usepackage{csquotes}

\DeclareDelimFormat{nameyeardelim}{\addcomma\space}

\DeclareNameAlias{sortname}{given-family}

\renewcommand{\bibnamedash}{\leavevmode\raise3pt\hbox to3em{\hrulefill}\space}

\AtEveryBibitem{%
  \clearfield{issn} % Remove issn
  \clearfield{doi} % Remove doi

  \ifentrytype{online}{}{% Remove url except for @online
    \clearfield{url}
  }
}

\renewbibmacro{in:}{%
    \ifentrytype{article}{}{\printtext{\bibstring{in}\intitlepunct}}}

\DeclareFieldFormat[article,periodical,inreference]{number}{\mkbibparens{#1}}
\DeclareFieldFormat[article,periodical,inreference]{volume}{\mkbibbold{#1}}
\renewbibmacro*{volume+number+eid}{%
    \printfield{volume}%
    \setunit*{\addthinspace}% NEW
    \printfield{number}%
    \setunit{\addcomma\space}%
    \printfield{eid}}

\DeclareFieldFormat[article,inbook,incollection]{title}{\enquote{#1}\addcomma} 

\date{Janvier 2026}
\bbkannee{78\textsuperscript{e} ann\'ee, 2025--2026}  % Commandes sp\'eciales
\bbknumero{1246}                                      % Commandes spéciales

\title[Bourbaki]{The Model Theory of Perfectoid Fields\\\textrm{[after Jahnke and Kartas]}}
\author{Sylvy Anscombe}
\address{Universit\'{e} Paris Cité, Sorbonne Universit\'{e},\\ CNRS, IMJ-PRG, F-75013 Paris, France}
\email{sylvy.anscombe@imj-prg.fr}

\begin{document}
\maketitle
%%%%%%%%%%%%
\setcounter{secnumdepth}{2}
\setcounter{tocdepth}{3}
 \tableofcontents
%%%%%%%%%%%%

\section{Introduction}

The most classical version of the Ax--Kochen/Ershov Theorem
is the following statement,
proved in 
\parencite{AxKochen-III}:
two henselian%
\footnote{This terminology from valuation theory will be defined in~\ref{section:valued_fields}.}
valued fields $(K,v)$ and $(L,w)$ of residue characteristic zero
have the same theory%
\footnote{%
	For background on first-order logic, including a careful definition of formulas,
	see~\cite{HilsLoeser} or~\cite{TentZiegler}.%
}
in a suitable first-order\footnote{We will only consider ``first order'' languages, which is to say that quantification is admitted only over elements of the structures concerned.} language of valued fields
if and only if
their residue fields
have the same theory
in the language of rings
and
their value groups
have the same theory
in the language of ordered groups.
Equivalently,
the theory
of each such valued field $(K,v)$
in the language of valued fields
is axiomatized by the background theory of henselian valued fields
of residue characteristic zero,
together with the particular sets of axioms to specify the theories of value group and residue field.
From yet another point of view, the theories of henselian valued fields of residue characteristic zero are parametrized by the first-order theories of ordered abelian groups and of fields of characteristic zero.

This theorem is at the heart of the theory 
developed in 
\parencite{AxKochen-I,AxKochen-II,AxKochen-III,Ershov65},
and vastly generalised since then.
The conclusion of the statement itself admits many variants,
and 
we refer to them collectively referred to as ``\AKEtextS''.

The original work of Ax, Kochen, and Ershov included the famous asymptotic \AKEtransfertext\ for first-order sentences
between the local fields of mixed and positive characteristics:
for every such sentence $\varphi$ there is $n\in\NN$ such that
\begin{align*}
	\QQ_{p}\models\varphi&\Leftrightarrow\ps{\FF_{p}}\models\varphi,
\end{align*}
for all $p>n$.
Here the notation ``$K\models\varphi$''
(read as ``$K$ models $\varphi$'')
simply means that $\varphi$ is true in the field $K$.
Importantly, any quantifiers in $\varphi$ are interpreted to range over elements of $K$.
See also Example~\ref{examples}(2.,4.).

The asymptotic Ax--Kochen/Ershov transfer principle in particular gives an asymptotic resolution of the following conjecture of Artin from 1936,
cf~\parencite{Artin},
for which 
a field $k$ is said to be $C_{i}$ if,
for every form $F\in k[X_{1},\ldots,X_{n}]$ of degree $d$ with $n>d^{i}$,
the equation $F(X_{1},\ldots,X_{n})=0$
has a nontrivial solution in $k^{n}$.

\begin{conjecture}[{Artin's Conjecture on forms over $\QQ_{p}$}]
	For each prime number~$p$, the field~$\QQ_{p}$ is~$C_{2}$.
\end{conjecture}
The
conjecture is false
as shown by Terjanian's counterexample \parencite{Terjanian} in degree $d=4$ and $p=2$,
generalized to arbitrary $p$ later in \cite{Terjanian2}.
See \cite{Heath-Brown} for a more detailed exposition.
The asymptotic resolution is:

\begin{theorem}[{\cite{AxKochen-I,Ershov65}}]
There is a computable%
\footnote{A function $f:\NN\rightarrow\NN$ is \define{computable} if there 
is a Turing machine that halts with output $f(n)$ on input $n$, for each $n\in\NN$, with appropriate coding of natural numbers.}
function $d\mapsto N_{d}$,
such that for any $p>N_{d}$ and any form $F\in\QQ_{p}[X_{1},\ldots,X_{n}]$ of degree $d$,
if $n>d^{2}$ the equation $F(X_{1},\ldots,X_{n})=0$ has a nontrivial solution in $\QQ_{p}^{n}$.
\end{theorem}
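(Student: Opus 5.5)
For fixed $d$, I would express the $C_{2}$ property in degree $d$ by a single first-order sentence in the language of rings. Then I would use the Ax--Kochen/Ershov transfer between $\QQ_{p}$ and $\ps{\FF_{p}}$, together with Lang's theorem that $\ps{\FF_{p}}$ is $C_{2}$.

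\textbf{The sentence.} Fix $d$ and set $n=d^{2}+1$. A nontrivial zero in more than $n$ variables is obtained by specializing the extra variables to zero. Indeed, a form of degree $d$ in $m\geq n$ variables restricts to a form in the first $n$ variables, unless that restriction vanishes identically, in which case $(1,0,\ldots,0)$ is already a nontrivial zero. So it suffices to treat exactly $n$ variables. A form of degree $d$ in $n$ variables is determined by its tuple of coefficients, one for each of the finitely many monomials of degree $d$. Hence the statement ``every such form has a nontrivial zero'' is a single sentence $\varphi_{d}$: it universally quantifies over the coefficients, existentially quantifies over $x_{1},\ldots,x_{n}$, and asserts that some $x_{i}\neq0$ and that the explicit polynomial expression vanishes. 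Since the language of valued fields contains the language of rings, $\varphi_{d}$ is a sentence of the kind to which the transfer principle applies. The map $d\mapsto\varphi_{d}$ is clearly computable.

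\textbf{Transfer.} By Lang's theorem, $\ps{\FF_{p}}$ is $C_{2}$ for every $p$: $\FF_{p}$ is $C_{1}$ by Chevalley--Warning, and passing to $\ps{\FF_{p}}$ raises the level by one. Thus $\ps{\FF_{p}}\models\varphi_{d}$ for all $p$. By the asymptotic Ax--Kochen/Ershov transfer principle recalled above, there is $N$ such that $\QQ_{p}\models\varphi_{d}\Leftrightarrow\ps{\FF_{p}}\models\varphi_{d}$ for all $p>N$. Hence $\QQ_{p}\models\varphi_{d}$ for $p>N$, which is the claim except for computability.

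\textbf{Computability: the main obstacle.} The transfer principle as stated only gives existence of $N$. The cleanest route is the underlying AKE argument: an ultraproduct $\prod_{\ult}\QQ_{p}$ over a nonprincipal ultrafilter and the corresponding ultraproduct $\prod_{\ult}\ps{\FF_{p}}$ are henselian, of residue characteristic zero, with elementarily equivalent residue fields $\prod_{\ult}\FF_{p}$ and elementarily equivalent value groups, both $\ZZ$-groups with a uniformizer of value $1$. The classical AKE theorem then makes them elementarily equivalent. Consequently the sentence $\varphi_{d}\leftrightarrow\varphi_{d}'$, where $\varphi_{d}'$ is the same statement read in the other family, follows from an effectively enumerable set of axioms. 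These axioms are: henselianity, $\val(p)=1$, residue characteristic $q\neq p$ for all $q$ up to some bound, the axioms of $\ZZ$-groups, and the axioms of the residue field theory, which is decidable by Ax's work on pseudofinite fields. A formal proof of $\varphi_{d}$ from these axioms, together with the known truth of $\varphi_{d}$ in all $\ps{\FF_{p}}$, uses only finitely many axioms ``$\mathrm{char}(\text{residue})\neq q$'', and the largest such $q$ bounds $N$. The algorithm searches through all proofs from this recursively enumerable axiom set until it finds one of the required form, and outputs the largest $q$ appearing. Since such a proof exists by completeness, the search halts, so $d\mapsto N_{d}$ is computable. The delicate point I expect is setting up this axiomatization correctly: the theory must decide $\varphi_{d}$ for large residue characteristic, uniformly across both families, which is exactly the content of the AKE theorem in residue characteristic zero.
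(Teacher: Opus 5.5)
Your reduction to a single sentence $\varphi_{d}$ (with $n=d^{2}+1$) is correct, and so is getting the existence of $N_{d}$ from Lang's theorem plus the asymptotic transfer. This is essentially the paper's argument. The paper indexes its sentences by the number of variables and obtains the transfer from $\prod_{p}\QQ_{p}/\ult\equiv\prod_{p}\ps{\FF_{p}}/\ult$ and Łoś's Theorem, which amounts to the same thing.

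The gap is in the computability paragraph: the proof search you describe does not output a valid bound.

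\textbf{Your axiom set.} It is not one that $\QQ_{p}$ satisfies for all large $p$. The axiom ``$\val(p)=1$'' is not a single sentence: the term $1+\cdots+1$ changes with $p$, and in $\ps{\FF_{p}}$ it equals $0$.

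\textbf{The residue-field thresholds.} More seriously, if you search for a proof of $\varphi_{d}$ itself, you cannot avoid residue-field axioms. Henselianity, residue characteristic zero and $\ZZ$-group axioms do not imply $\varphi_{2}$: it fails in $\ps{\RR}$, which is formally real. But any residue-field axioms true in $\prod_{p}\FF_{p}/\ult$, such as pseudofinite-field axioms, are not all true in any finite $\FF_{p}$. A proof will use finitely many of them, for instance PAC instances or ``at least $m$ elements''. Each of these holds in $\FF_{p}$ only beyond its own threshold. Your $N$, the largest excluded residue characteristic, ignores these thresholds, so it can be too small.

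\textbf{Two further problems.} The existence of such a proof would also need Ax's theorem that every characteristic-zero pseudofinite field is elementarily equivalent to an ultraproduct of the $\FF_{p}$, which you do not invoke. And the target $\varphi_{d}\leftrightarrow\varphi_{d}'$ is vacuous, since $\varphi_{d}$ is one and the same $\Lring$-sentence in both families.

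\textbf{The repair.} Search for an equivalence with a residue-field sentence rather than for a proof of $\varphi_{d}$. This is exactly what the paper's computable sentence-by-sentence map $\varphi\mapsto\varphi^{0}$ for $\Hz$ provides.
\begin{itemize}
\item Let $T_{0}$ consist of $\H$, the axioms $v(q)=0$ for all primes $q$, and the $\Loag$-theory of $\ZZ$ on the value-group sort. This is a recursively enumerable set.
\item By the AKE principle for $\Hz$ and compactness, there is an $\Lring$-sentence $\psi$ on the residue sort with $T_{0}\vdash\varphi_{d}\leftrightarrow\psi$.
\item Enumerate pairs consisting of such a $\psi$ and a proof until one is found. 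Let $N_{d}$ be the largest $q$ whose axiom $v(q)=0$ occurs in that proof.
\item For $p>N_{d}$, both $\QQ_{p}$ and $\ps{\FF_{p}}$ satisfy every axiom used and share the residue field $\FF_{p}$. Hence $\QQ_{p}\models\varphi_{d}\Leftrightarrow\FF_{p}\models\psi\Leftrightarrow\ps{\FF_{p}}\models\varphi_{d}$, and Lang's theorem finishes.
\end{itemize}
In this version Lang's theorem enters only semantically, at the end. No axioms about the residue field are needed, so no untracked thresholds arise.
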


This can be proved directly from a few important ingredients:
Firstly it was shown by \cite{Chevalley} that each $\FF_{p}$ is $C_{1}$,
and then by \cite{Lang} that
each $\ps{\FF_{p}}$ is $C_{2}$.
Secondly,
if we denote the set of prime numbers by $\PP$
and 
if we assume the continuum hypothesis
(i.e.,~the equality $2^{\aleph_{0}}=\aleph_{1}$ of cardinal numbers),
then there is an isomorphism
\begin{align}\label{eq:isomorphism}
	\prod_{p\in\PP}\QQ_{p}/\ult&\cong\prod_{p\in\PP}\ps{\FF_{p}}/\ult,
\end{align}
for every nonprincipal ultrafilter $\ult$ on $\PP$.
(An ultrafilter $\ult$ on $\PP$ is \define{principal} if it is of the form $\{A\subseteq\PP\mid p\in A\}$ for some $p\in\PP$.)
This isomorphism is obtained by a certain structure theory of valued fields in residue characteristic zero,
building on Kaplansky's work \parencite{Kaplansky} with Ostrowski's notion of pseudo-Cauchy sequences\footnote{Also called \define{pseudo-convergent} sequences.}~\parencite{Ostrowski}.
The use of the continuum hypothesis is a red herring: the final asymptotic result is absolute, and so independent of the hypothesis.
See for example~\cite{Shelah} for further set-theoretic results on the existence of this isomorphism.
Next, it is straightforward to formalize the property $C_{2}$ as an infinite (conjunction of a) family 
of first-order sentences $\varphi_{n}$ in the language of rings,
where
$\varphi_{n}$
expresses that
there is a nontrivial solution of the equation
$F(X_{1},\ldots,X_{n})=0$
for all forms $F$ of degree $d<\sqrt{n}$ in the $n$ variables $X_{1},\ldots,X_{n}$.
The final ingredient is perhaps the most model theoretic:
according to Łoś's Theorem,
the theory of an ultraproduct\footnote{See~\ref{rem:ultraproducts} for a very brief recap on the subject of ultraproducts.} is the ``almost all'' theory of its factors.
More precisely, for any family $(k_{i})_{i\in I}$ of structures in a first-order language, and for every ultrafilter $\ult$ on $I$, we have
\begin{align*}
	\prod_{i\in I}k_{i}/\ult\models\varphi&\Leftrightarrow \{i\in I\mid k_{i}\models\varphi\}\in\ult,
\end{align*}
for every sentence $\varphi$ in the language.
It now follows that $\varphi_{n}$ is true in $\QQ_{p}$ for all but finitely many prime numbers $p$.

The fishy isomorphism in~\eqref{eq:isomorphism}
can (without set theoretic hypotheses) be replaced by an elementary equivalence
(i.e.,~an equality of theories)
\begin{align*}
	\prod_{p\in\PP}\QQ_{p}/\ult&\equiv\prod_{p\in\PP}\ps{\FF_{p}}/\ult,
\end{align*}
for nonprincipal ultrafilters $\ult$ on $\PP$:
both of these are models of the theory of henselian valued fields with, as value group, the ultrapower $\ZZ^{\ult}$ and, as residue field, the ultraproduct $\prod_{p\in\PP}\FF_{p}/\ult$, noting that the latter has characteristic zero.
Putting this together with Kaplansky's theory, the elementary equivalence follows from a standard back-and-forth argument.
This method may be refined to give a proof of the classical \AKEtext~in residue characteristic zero.

\bigskip

In an entirely different direction, the notion of the tilt---applied to perfectoid fields, rings, and spaces---gives another way to transfer information between mixed characteristic and positive characteristic.
The first striking example is the following theorem, that describes an equivalence between the category of finite separable extensions of $\QQ_{p}(p^{1/p^{\infty}})$ on the one hand, and of $\ps{\FF_{p}}^{\perf}$ on the other.

\begin{theorem}[{\cite{FontaineWintenberger}}]
	There is an isomorphism
$\mathrm{Gal}(\QQ_{p}(p^{1/p^{\infty}}))\cong\mathrm{Gal}(\ps{\FF_{p}}^{\perf})$.
\end{theorem}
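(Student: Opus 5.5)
The plan is the tilting argument in the style of Scholze, which refines the original field-of-norms approach of Fontaine--Wintenberger. Put $K=\QQ_{p}(p^{1/p^{\infty}})$ and let $\hat K$ be its $p$-adic completion; $\hat K$ is a perfectoid field. Its tilt is $\tilt{\hat K}=\varprojlim_{x\mapsto x^{p}}\hat K$. With the element $t=(p,p^{1/p},p^{1/p^{2}},\ldots)$, the tilt is identified with the $t$-adic completion of $\ps{\FF_{p}}^{\perf}$. The argument is a chain of isomorphisms,
\begin{align*}
\mathrm{Gal}(K)\cong\mathrm{Gal}(\hat K)\cong\mathrm{Gal}(\tilt{\hat K})\cong\mathrm{Gal}(\ps{\FF_{p}}^{\perf}).
\end{align*}

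The two outer isomorphisms come from Krasner's lemma. Both $K$ and $\ps{\FF_{p}}^{\perf}$ are henselian: each is a union of henselian fields, namely finite extensions of $\QQ_{p}$ in the first case and the fields $\ps{\FF_{p}}^{1/p^{n}}\cong\ps{\FF_{p}}$ in the second. For a henselian valued field $F$ with completion $\hat F$, the functor $L\mapsto L\otimes_{F}\hat F$ is an equivalence of categories of finite separable extensions. Indeed, $F$ is separably closed in $\hat F$, so every separable polynomial over $\hat F$ can be approximated by one over $F$ with the same splitting behaviour. This gives $\mathrm{Gal}(F)\cong\mathrm{Gal}(\hat F)$. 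In the perfect case I also need that the completion of $\ps{\FF_{p}}^{\perf}$ is exactly $\tilt{\hat K}$. I will check this directly: I identify $\mathcal O_{\tilt{\hat K}}/t$ with $\mathcal O_{\hat K}/p=\ZZ_{p}[p^{1/p^{\infty}}]/p\cong\FF_{p}[t^{1/p^{\infty}}]/t$, and I use perfectness to lift this to all powers of $t$.

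The middle isomorphism is the core. The first step is to build the functor $L\mapsto \tilt{L}$ from finite extensions of $\hat K$ to finite extensions of $\tilt{\hat K}$, and its inverse $M\mapsto W(\mathcal O_{M})\otimes_{W(\mathcal O_{\tilt{\hat K}})}\hat K$ (where $W$ denotes the ring of Witt vectors), up to the appropriate $\theta$-map. The second step is to show that perfectoid fields correspond under tilting, i.e. a perfectoid $L/\hat K$ is determined by $\tilt L$. This amounts to an equivalence between perfectoid $\mathcal O_{\hat K}$-algebras and perfectoid $\mathcal O_{\tilt{\hat K}}$-algebras, both matched with the same algebras over $\mathcal O_{\hat K}/p\cong\mathcal O_{\tilt{\hat K}}/t$, modulo almost-zero modules. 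The third step is to show that every finite extension $L$ of $\hat K$ is again perfectoid. On the tilt side this is easy: a finite extension of a complete perfect field of characteristic $p$ has perfect hull that is still complete and perfectoid, and it is separable. The equivalence then shows that $\tilt{\hat K}$ and $\hat K$ have matching categories of finite étale algebras, compatible with degrees. This gives $\mathrm{Gal}(\hat K)\cong\mathrm{Gal}(\tilt{\hat K})$.

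The main obstacle is the third step: proving that any finite extension of $\hat K$ is perfectoid, equivalently that $\mathcal O_{L}$ is almost finite étale over $\mathcal O_{\hat K}$. This is the almost purity statement, which in Tate's approach is the vanishing of the different up to $\mathfrak m$. My first attempt is an approximation argument. Take $L=\hat K(\alpha)$ with $\alpha$ a root of a monic $P\in\mathcal O_{\hat K}[X]$. Use the equivalence mod $p$ to lift the reduction of $P$ to a polynomial $Q$ over $\mathcal O_{\tilt{\hat K}}$. Since $\tilt{\hat K}$ is perfect, the extension it defines is almost unramified; from this I extract a root of $Q$, transfer it back, and refine it by Newton/Krasner-style successive approximation to an actual root of $P$. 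This shows that the essential image of the tilting functor contains every finite extension of $\hat K$. Everything else in the plan is formal bookkeeping around this estimate.
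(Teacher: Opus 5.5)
Your outer reductions are fine: $\mathrm{Gal}(K)\cong\mathrm{Gal}(\hat K)$ via henselianity and Krasner, and the completion of $\ps{\FF_p}^{\perf}$ is identified with $\tilt{\hat K}$. The overall architecture is Scholze's. The paper gives no proof of its own here. It cites Fontaine--Wintenberger and later indicates that the result is recovered from the Jahnke--Kartas Second Main Theorem. There, your approximation estimate is replaced by the nonstandard fact that every finite extension of $(K^{\ult},w)$ is unramified (divisible value group plus algebraic maximality), together with the elementary embedding $\iota$ of $\tilt{K}$ into $k_w$.

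The genuine gap is in your third step, which you rightly call the heart of the matter. One lift $Q$ of $P \bmod p$, one root of $Q$, then Newton/Krasner refinement, does not produce a root of $P$. The transferred element $z$ only satisfies $v(P(z))\geq v(p)$. That is below Hensel's threshold $v(P(z))>2v(P'(z))$, and outside Krasner's radius, as soon as the discriminant of $P$ is large. Moreover, the field you land in need not contain any root of $P$. For $p$ odd and $c\in\ZZ_p^{\times}$ a non-square, $P=X^2-p^3c$ is irreducible over $\hat K$ and reduces to $X^2$. The admissible lift $Q=X^2-t$ has root field $M=\tilt{\hat K}(t^{1/2})$, whose untilt is $\hat K(p^{1/2})$. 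That field has residue field $\FF_p$, so it contains no root $\pm p\sqrt{pc}$ of $P$, even though $z=p^{1/2}$ has $v(P(z))=v(p)$. Also, almost unramifiedness on the tilt side plays no role in producing a root. That input is needed elsewhere: to show that the untilt of a finite extension of $\tilt{\hat K}$ is a finite extension of $\hat K$ of the same degree (char-$p$ almost purity plus unique deformation of almost finite étale algebras). You dismiss this as formal ``compatibility with degrees''.

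The fix is to consult the tilt at every step, which forces you to work over an algebraically closed tilt. Let $C^{\flat}$ be the completion of an algebraic closure of $\tilt{\hat K}$ and $C$ its untilt; show $C$ is algebraically closed.
Take $P\in\mathcal{O}_C[X]$ monic irreducible of degree $d$. Lift $P \bmod p$ to $\mathcal{O}_{C^\flat}[X]$, take a root $y\in\mathcal{O}_{C^\flat}$ and put $z=y^{\sharp}$. Since $P(X+z)$ is irreducible over the henselian field $C$, all its roots have the same valuation, at least $v(p)/d$. Rescaling by an element of value $v(p)/d$ gives a new monic integral irreducible polynomial. Iterating yields a Cauchy sequence $(z_n)$ in $C$ with $v(P(z_n))\geq (n+1)v(p)$, whose limit is a root of $P$, so $d=1$.
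Next, the union $N$ of the untilts $M^{\sharp}$ of finite extensions $M/\tilt{\hat K}$ is henselian and dense in $C$. By your own Krasner step it is therefore algebraically closed. So every finite $L/\hat K$ lies in some $M^{\sharp}$ with $M/\tilt{\hat K}$ Galois, and full faithfulness plus degree preservation give $L=(M^{H})^{\sharp}$ for a subgroup $H$. Only then is $L$ known to be perfectoid and in the essential image of tilting.
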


This theorem builds on
the fact that appropriate quotients of the valuation rings of these valued fields are equal:
\begin{align*}
	\Ring{\QQ_{p}(p^{1/p^{\infty}})}/p
	&=
	\Ring{\ps{\FF_{p}}^{\perf}}/t.
\end{align*}
Moreover the theorem motivates the following ``transformation'' of a complete mixed characteristic valued field into one of positive characteristic:
\begin{align*}
	\widehat{\QQ_{p}(p^{1/p^{\infty}})}&\mapsto\widehat{\ps{\FF_{p}}^{\perf}}.
\end{align*}
A \define{perfectoid field} is by definition a field $K$ with a complete valuation of rank $1$,
and not discrete, of residue characteristic $p>0$, such that the Frobenius map
$x\mapsto x^{p}$ induces a surjection
$\Ring{K}/p\rightarrow\Ring{K}/p$
(if this latter condition holds, we say that $\Ring{K}$ is \define{semi-perfect}).
The \define{tilt} of a perfectoid field $K$
is the field of fractions of the inverse limit 
$\tilt{\Ring{K}}$
of copies of the ring $\Ring{K}/p$,
with transition maps
given by the Frobenius map
$x\mapsto x^{p}$:
\begin{align*}
	\tilt{R}&:=\lim_{\leftarrow}(\ldots\rightarrow R/p\rightarrow R/p\rightarrow\ldots).
\end{align*}
In his celebrated work (\cite{Scholze}), P.~Scholze geometrized the above to prove that tilting extends to an equivalence
between the category of perfectoid rings over $K$ and the category of perfectoid rings over $\tilt{K}$.

The aim of this talk is to explain a new contribution of 
F.~Jahnke and K.~Kartas,
who have introduced an elementary class $\mathcal{C}$
%of ``almost tame''
of valued fields,
containing the perfectoid fields,
and proved a range of \AKEtextS~in this context,
including for the first time valued fields admitting nontrivial defect extensions.
I present the following two ``conceptual equations'':
\begin{align*}
	&\text{AKE for henselian valued fields of residue characteristic zero}\\
	+\quad&\text{Structure theory for complete discrete valuation rings}\\
	=\quad&\text{AKE for henselian finitely ramified mixed characteristic}
\end{align*}
and
\begin{align*}
	&\text{AKE for tame valued fields}\\
	+\quad&\text{Structure theory for perfectoid fields}\\
	=\quad&\text{AKE for perfectoid fields (and more) {\em à la} Jahnke--Kartas}.
\end{align*}
From their work one obtains a nonstandard version of the Almost Purity theorem of Scholze, and the Fontaine--Wintenberger theorem.
This should
be seen in the context of
Kuhlmann's tame valued fields,
which
are another setting in which there is an Ax--Kochen/Ershov-like theory,
as well as important earlier work of Kuhlmann and Rzepka on the valuation theory of deeply ramified fields,
and of Kartas which showed the transfer of certain decidability statements via tilting.
More recently, 
\cite{RKSS}
have formalized perfectoid fields in a language of continuous logic, and they prove that tilting is then a {\em bona fide} bi-interpretation in the sense of continuous model theory.

\begin{remark}\label{rem:ultraproducts}
Let $(A_{i})_{i\in I}$ be a family 
of structures in a given language $\Lang$.
We may form the direct product
\begin{align*}
    \prod_{i\in I}A_{i}&:=\{(a_{i})_{i\in I}\mid a_{i}\in A_{i}\}
\end{align*}
in the usual way.
However, this structure generally bears little resemblance to its factors in terms of model theory.
For example, the direct product of two integral domains is never an integral domain.
For an ultrafilter $\ult$ on $I$, we introduce the following notation:
\begin{align*}
    (a_{i})\sim_{\ult}(a_{i}')\Leftrightarrow\{i\in I\mid a_{i}=a_{i}'\}\in\ult.
\end{align*}
Then $\sim_{\ult}$ is an equivalence relation on $\prod_{i\in I}A_{i}$.
If the $A_{i}$ are rings, then the quotient corresponds to the quotient by the ideal
$\mathfrak{p}=\{(a_{i})_{i\in I}\mid\{i\in I\mid a_{i}=0\}\in\ult\}$.
If the $A_{i}$ are integral domains then $\mathfrak{p}$ is prime,
and if the $A_{i}$ are fields then $\mathfrak{p}$ is maximal.
For example, an ultraproduct of fields is a field.
Łoś proved that
	\begin{align*}
		\prod_{i\in I}A_{i}/\ult\models\varphi&\Leftrightarrow\{i\in I\mid A_{i}\models\varphi\}\in\ult,
	\end{align*}
	for all $\Lang$-sentences $\varphi$.
	We may define the \define{ultralimit}
	of any $(a_{i})_{i\in I}\in\prod_{i\in I}A_{i}$,
	to be its equivalence class under $\sim_{\ult}$,
	and we denote this ultralimit by
	$\ulim_{i\rightarrow\ult}a_{i}$.
\end{remark}

\subsection{Some model theory of fields}

To find a model theoretic view of rings, fields, and valued fields we must first settle on suitable languages:
$\Lring=\{+,\cdot,-,0,1\}$
is the usual candidate.

\begin{itemize}
	\item
		The theory of $\Th(\ZZ)$ is undecidable:
		Gödel famously proved
		that there is no algorithm
		(equivalently, no Turing machine)
		to determine when an $\Lring$-sentence belongs to this theory or not,
		see for example~\cite{Gödel}.
		Similarly, also the theory
		$\Th(\QQ)$
		is undecidable, since by \cite{JuliaRobinson},
		the set
		$\ZZ$ is an $\Lring$-definable subset of $\QQ$,
		so any statement about $\ZZ$ can be rewritten into a special statement about $\QQ$.
		This is an important example of \emph{interpretability},
		which is
		the model theoretic formalism describing when one structure appears (up to isomorphism) among the definable sets (and definable quotients) in another structure.
        In this case, there is also an interpretation in the other direction,
		since $\QQ$ is interpretable by $\ZZ$, and this is easy to see:
        an isomorphic copy of $\QQ$ appears as the quotient of
        $\ZZ\times\NN_{>0}\subseteq\ZZ^{2}$
        by the definable equivalence relation
        \begin{align*}
            (a,b)\sim(c,d)&\Leftrightarrow ad-bc=0,
        \end{align*}
        together with the singleton $(0,0)$.
        Importantly, the arithmetic operations of addition, multiplication, and subtraction, as well as the two constants, are all definable on this quotient by $\Lring$-formulas interpreted in $\ZZ$.
	\item
		It was proved by
		\cite{Tarski}
		that the theories
		$\Th(\CC)$
		and
		$\Th(\overline{\FF}_{p})$
		are decidable, and axiomatized simply by their characteristic and the background theory $\ACF$ of algebraically closed fields.
		Indeed, these theories are the only completions of $\ACF$.
		Moreover, $\ACF$ admits a quantifier-elimination in $\Lring$:
		every $\Lring$-formula
		$\varphi(\bar{x})$
		(for $\bar{x}=(x_{0},\ldots,x_{n-1})$)
		is equivalent modulo $\ACF$
		to 
		another such formula
		$\varphi^{\mathrm{qf}}(\bar{x})$
		that is quantifier-free (literally: a first-order formula with no quantifiers).
		It follows (abstractly, by listing all possible proofs from the axioms) that this map
		\begin{align*}
			\varphi(\bar{x})\mapsto\varphi^{\mathrm{qf}}(\bar{x})
		\end{align*}
		is computable, relative to a suitable Gödel coding of formulas, and there are by now rather explicit and efficient algorithms.
        
		When it comes to sentences (those formulas with no free variables) we can do a little better.
		Each $\Lring$-sentence $\varphi$ is equivalent modulo $\ACF$ to a sentence $\varphi^{\chi}$,
		which is a Boolean combination of atomic sentences
		\begin{align*}
		\underbrace{1+\ldots+1}_{\text{$p$ times}}=0,
		\end{align*}
		for $p\in\PP$.
		These sentences select individual positive characteristics.
		Again, the map $\varphi\mapsto\varphi^{\chi}$ can be chosen to be computable.
	\item
		Tarski's early work in model-theoretic algebra
		(for example~\cite{Tarski})
		also included
		an axiomatization and decision procedure for
		the theory
		$\RCF=\Th(\RR)$
		of real closed fields:
		those fields in which the squares form the nonnegative cone of a total ordering
		and every odd-degree polynomial has a root.
		There is also a effective quantifier-elimination in the language of ordered fields~$\Lof=\Lang\cup\{\leq\}$,
        i.e.,~there is computable function
		\begin{align*}
			\psi(\bar{x})\mapsto\psi^{\mathrm{qf}}(\bar{x})
		\end{align*}
		mapping each $\Lof$-formula to a quantifier-free $\Lof$-formula that is equivalent modulo $\RCF$.
        We observe another occurrence of the idea of interpretability:
        an isomorphic copy of $\CC$ appears as 
        $\RR^{2}$,
        with the arithmetic operations of addition, multiplication, and subtraction again given by $\Lring$-formulas on $\RR$.
\end{itemize}

\subsection{Valued fields}
\label{section:valued_fields}

A \define{valuation} on a field $K$
	is an epimorphism
	$v:K^{\times}\rightarrow\Gamma_{v}$
	from the multiplicative group of $K$
	onto an (additive) totally ordered abelian group $\Gamma_{v}$ (an ``OAG'')
	such that~$v(x+y)\geq\min\{v(x),v(y)\}$
	for all~$x,y\in K^{\times}$.
	By convention we extend $v$ to a map~$v:K\rightarrow\Gamma_{v}\cup\{\infty\}$
	by writing $v(0)=\infty$,
	for a new symbol $\infty$.

	\begin{notation}
As one expects, a range of notation is used in the literature.
We will write
$\mathcal{O}_{v}:=\{x\in K\mid v(x)\geq0\}$ for the
\define{valuation ring}
(sometimes written~$\ring{K}$ in the literature),
	we write
	$\mathfrak{m}_{v}:=\{x\in K\mid v(x)>0\}$
	(sometimes~$\ideal{K}$)
	for the \define{valuation ideal},
	i.e.,~the unique maximal ideal of $\mathcal{O}_{v}$.
Write
	$vK:=\Gamma_{v}$
	for the \define{value group} of $v$,
and 
	$Kv=k_{v}:=\mathcal{O}_{v}/\mathfrak{m}_{v}$
	for the \define{residue field} of $v$.
	\end{notation}

\begin{example}\label{examples}
	\begin{enumerate}
\item
			$(\QQ,v_{p})$:
	the $p$-adic valuations on $\QQ$.
	For $p\in\PP$, every $x\in\QQ^{\times}$ can be written uniquely as $\tfrac{m}{n}p^{l}$ for $l,m,n\in\ZZ$ with $p,m,n$ pairwise coprime, and~$n>0$.
	We define the \define{$p$-adic valuation} $v_{p}:\QQ\rightarrow\ZZ\cup\{\infty\}$ by
			\begin{align*}
				v_{p}(x)&:=l
			\end{align*}
			and $v_{p}(0)=\infty$.
			The valuation ring is $\ZZ_{(p)}$, the maximal ideal is $p\ZZ_{(p)}$, and the residue field is $\FF_{p}$.
\item
	The \define{field of $p$-adic numbers}
			is defined to be the completion of $(\QQ,v_{p})$, which naturally admits the structure of a valued field.
			We denote it by 
			$(\QQ_{p},v_{p})$.
			Its valuation ring $\ZZ_{p}$ can be constructed as an inverse limit of the system $\ZZ/p^{n}\ZZ$, with the usual quotient maps, or explicitly as power series in the ``variable'' $p$,
			i.e.,
			\begin{align*}
				\sum_{n=0}^{\infty}a_{n}p^{n},
			\end{align*}
			with coefficients $a_{n}\in\{0,\ldots,p-1\}$ and multiplication and addition defined to take care of the ``carrying'' that we are familiar with from basic arithmetic.
			The maximal ideal is $p\ZZ_{p}$, while its value group and residue field are again $\ZZ$ and $\FF_{p}$, respectively.
\item
Given any irreducible $f\in k[t]$ we can define a valuation $v_{f}$ on $k(t)$ by analogy with~$v_{p}$.
			For $g\in k[t]$, let $v_{f}(g)$ be the maximal integer such that $f^{v_{f}(g)}$ is the highest power of $f$ dividing $g$ in $k[t]$, and $v_{f}(g/h)=v_{f}(g)-v_{f}(h)$.
The valuation ring of $v_{f}$ is the localization $k[t]_{(f)}$ of $k[t]$ at $(f)$.
\item
	The valued field of \define{formal power series} over a field $k$ is $(\ps{k},v_{t})$,
 where
			\begin{align*}
				\ps{k}&:=\Big\{	\sum_{n=N}^{\infty}a_{n}t^{n}\;\Big|\;a_{n}\in k,N\in\ZZ\Big\}
			\end{align*}
			and $v_{t}$ is defined by
			\begin{align*}
				v_{t}\Big(\sum_{n=N}^{\infty}a_{n}t^{n}\Big)&:=N,\quad\text{supposing that $a_{N}\neq0$}.
			\end{align*}
			The valuation ring is $\ips{k}=\{\sum_{n=0}^{\infty}a_{n}t^{n}\mid a_{n}\in k\}$ and the residue field is $k$.
			Indeed, $(\ps{k},v_{t})$ is the completion of $(k(t),v_{t})$.
		\item
			For any ordered abelian group $\Gamma$, 
			and any field $k$,
			the valued field of \define{Hahn series}/\define{generalized formal power series} is $(\Hs{k}{\Gamma},v_{t})$,
	where
			\begin{align*}
				\Hs{k}{\Gamma}&:=\Big\{	\sum_{\gamma\in\Gamma}a_{\gamma}t^{\gamma}\;\Big|\;a_{\gamma}\in k,\supp\Big(\sum a_{\gamma}t^{\gamma}\Big)\text{ is well-ordered}\Big\}
			\end{align*}
			and $v_{t}$ is defined by
			\begin{align*}
				v_{t}\Big(\sum_{\gamma\in\Gamma}a_{\gamma}t^{\gamma}\Big)&:=\gamma_{0},\quad\gamma_{0}=\min\Big(\supp\Big(\sum_{\gamma\in\Gamma}a_{\gamma}t^{\gamma}\Big)\Big).
			\end{align*}
	\end{enumerate}
	\end{example}

 \begin{definition}\label{def:henselian}
	 A valued field $(K,v)$ is \define{henselian} if for all monic $f\in\Ring{v}[X]$, and all $a\in\Ring{v}$, if $f(a)\in\Ideal{v}$ and $f'(a)\notin\Ideal{v}$,
	 then there exists a unique $a'\in a+\Ideal{v}$ such that $f(a')=0$.
 \end{definition}

\begin{lemma}[{Hensel's Lemma}]\label{lem:Hensel}
Complete valued fields of rank $1$ are henselian.
In particular, $(\QQ_{p},v_{p})$ and $(\ps{k},v_{t})$ are henselian, for any field $k$.
\end{lemma}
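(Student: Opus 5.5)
The plan is to prove Hensel's Lemma by Newton iteration, using completeness to pass to the limit. Since $v$ has rank~$1$, I will fix an order-embedding of $vK$ into $(\RR,+)$. Then $|x|:=e^{-v(x)}$ is a nonarchimedean absolute value, and completeness means that every $|\cdot|$-Cauchy sequence converges. Fix a monic $f\in\Ring{v}[X]$ and $a\in\Ring{v}$ with $v(f(a))>0$ and $v(f'(a))=0$. Set $\varepsilon:=v(f(a))>0$ and define
\begin{align*}
a_{0}:=a,\qquad a_{n+1}:=a_{n}-\frac{f(a_{n})}{f'(a_{n})}.
\end{align*}

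The key estimate comes from the Taylor expansion $f(x+h)=f(x)+f'(x)h+h^{2}g(x,h)$, where $g$ has coefficients in $\Ring{v}$. I will prove by induction that $a_{n}\in\Ring{v}$, that $v(f'(a_{n}))=0$, and that $v(f(a_{n}))\geq 2^{n}\varepsilon$.

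For the inductive step, put $h_{n}:=-f(a_{n})/f'(a_{n})$, so that $v(h_{n})=v(f(a_{n}))>0$. Then $a_{n+1}\in a_{n}+\Ideal{v}\subseteq\Ring{v}$. The expansion gives $f(a_{n+1})=h_{n}^{2}g(a_{n},h_{n})$, hence $v(f(a_{n+1}))\geq 2v(f(a_{n}))$. Applying the same expansion to $f'$ gives $f'(a_{n+1})\equiv f'(a_{n})\pmod{\Ideal{v}}$, so $f'(a_{n+1})$ is still a unit.

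Since $v(a_{n+1}-a_{n})\geq 2^{n}\varepsilon\to\infty$, and this is a genuine real limit precisely because the rank is~$1$, the sequence $(a_{n})$ is Cauchy. By completeness it converges to some $a'$, which lies in $a+\Ideal{v}$ because that set is closed. Continuity of polynomials then gives $f(a')=\lim f(a_{n})=0$.

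For uniqueness, suppose $b\in a+\Ideal{v}$ is another root and write $b=a'+h$. Expanding,
\begin{align*}
0=f(b)=h\bigl(f'(a')+h\,g(a',h)\bigr).
\end{align*}
We have $v(f'(a'))=0$ and $v(h\,g(a',h))>0$, so the factor $f'(a')+h\,g(a',h)$ has valuation $0$ and is nonzero. Hence $h=0$.

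The particular cases follow from Example~\ref{examples}. Both $\QQ_{p}$ and $\ps{k}$ are completions with value group $\ZZ$, which has rank~$1$, so the general statement applies to them.

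The main obstacle is the convergence step. This is the only place where both the rank-$1$ and the completeness hypotheses are used. The point to be careful about is that $2^{n}\varepsilon$ tends to infinity, which requires $vK$ to be archimedean. Beyond that, only routine bookkeeping is needed: the Taylor coefficients must be integral, and the derivative must remain a unit along the whole iteration.
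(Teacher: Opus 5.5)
Your Newton-iteration proof is correct. The quadratic estimate $v(f(a_{n+1}))\geq 2v(f(a_{n}))$, the persistence of $f'(a_{n})$ as a unit, the use of the archimedean property to get a Cauchy sequence, and the factorization $f(b)=h\bigl(f'(a')+h\,g(a',h)\bigr)$ for uniqueness are all sound. The paper gives no proof of this lemma and simply quotes it as a classical fact; your argument is the standard proof.
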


There are numerous properties equivalent to henselianity.
For example, the henselianity of a valued field $(K,v)$ is equivalent to the uniqueness of the extension of $v$ to the separable algebraic closure of $K$,
The equation $v(x)=\tfrac{1}{p}v(x^{p})$ shows that there is always a unique extension of $v$ from $K$ to its perfect hull $K^{\perf}$.

Any valued field $(K,v)$ admits a canonical \define{henselization} $(K^{\hens},v^{\hens})$,
which is a separably algebraic extension of $(K,v)$,
and is henselian,
satisfying the obvious universal property.
It coincides with the fixed field of the decomposition subgroup of the absolute Galois group $G_{K}$.

To discuss the model theory of extensions of valued fields, we introduce to more pieces of terminology.

\begin{notation}
	An extension $(L,w)/(K,v)$ of valued fields
	is
	\define{existentially closed},
	denoted
	$(K,v)\preceq_{\exists}(L,w)$,
	if
	\begin{align*}
		&(K,v)\models\exists y_{1},\ldots,y_{n}\;\varphi(a_{1},\ldots,a_{m},y_{1},\ldots,y_{n})\\
		\intertext{if and only if}
		&(L,w)\models\exists y_{1},\ldots,y_{n}\;\varphi(a_{1},\ldots,a_{m},y_{1},\ldots,y_{n}),
	\end{align*}
	for every quantifier-free $\Lval$-formula $\varphi(x_{1},\ldots,x_{m},y_{1},\ldots,y_{n})$,
	and tuple $(a_{1},\ldots,a_{m})\in K^{m}$.
	The extension is said to be
	\define{elementary},
	denoted
	$(K,v)\preceq(L,w)$,
	if
	\begin{align*}
		&(K,v)\models\varphi(a_{1},\ldots,a_{m})\\
		\intertext{if and only if}
		&(L,w)\models\varphi(a_{1},\ldots,a_{m}),
	\end{align*}
	for every $\Lval$-formula $\varphi(x_{1},\ldots,x_{m})$
	and $(a_{1},\ldots,a_{m})\in K^{m}$.
\end{notation}

We now describe the relationships between a few important examples.

\begin{example}
	\
	\begin{enumerate}
		\item
			For any field $k$,
			the henselization
			$k(t)^{\hens}$ is the relative algebraic closure of $k(t)$ in $\ps{k}$,
			and
			we have
			$(k(t)^{\hens},v_{t})\preceq_{\exists}(\ps{k},v_{t})$,
			by a theorem of Ershov~{\rm\parencite{Ershovbook}}.
			This follows from the fact that $\ps{k}/k(t)^{\hens}$ is separable, and that $k(t)^{\hens}$ is dense in $\ps{k}$ with respect to the topology induced by $v_{t}$.
		\item
			For any prime number $p$,
			the henselization
			$\QQ^{\hens}$ of $\QQ$ with respect to the $p$-adic valuation $v_{p}$ is also the relative algebraic closure of $\QQ$ in $\QQ_{p}$
			and
			$(\QQ^{\hens},v_{p}^{\hens})\preceq_{\exists}(\QQ_{p},v_{p})$.
			This can be seen by the same density argument as in~1.
			In fact
			$(\QQ^{\hens},v_{p}^{\hens})\preceq(\QQ_{p},v_{p})$,
			which is less easy but follows from the AKE theory for~$\QQ_{p}$.
	\end{enumerate}
\end{example}

\begin{lemma}[{Fundamental equality\footnote{This is unually expressed as $[L:K]\geq\sum_{w}e(w/v)f(w/v)$ and called the ``fundamental inequality''.}}]\label{lem:fundamental_inequality}
Let $(K,v)$ be a valued field of residue characteristic exponent%
\footnote{The characteristic exponent of a field $k$ is the characteristic of $k$ if this is positive, but is $1$ if the characteristic of $k$ is zero.}
$p\in\PP\cup\{1\}$
and let $L/K$ be a finite extension.
Then
\begin{align*}
 [L:K]&=\sum_{w\supseteq v}e(w/v)f(w/v)p^{d(w/v)},
\end{align*}
where $e(w/v)=(wL:vK)$
is the ramification degree
and
$f(w/v)=[Lw:Kv]$
is the inertia degree of the extension.
\end{lemma}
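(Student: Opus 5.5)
The plan is to read the statement as \emph{Ostrowski's Lemma}. We define the defect $d(w/v)$ so that $p^{d(w/v)}$ is the factor by which the local degree exceeds $e(w/v)f(w/v)$, and then show that this factor is always a nonnegative integer power of the residue characteristic exponent $p$. We carry this out in two stages: a global-to-local reduction, and then an analysis of the henselian case.

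\emph{Reduction to the henselian case.} Write $L=K(\alpha)$ after first splitting off the purely inseparable part. For the separable part $L_{s}/K$, we use the henselization $(K^{\hens},v^{\hens})$ and the decomposition-group description of it recalled above. First, $L_{s}\otimes_{K}K^{\hens}\cong\prod_{w}L_{s,w}^{\hens}$, where the product runs over the extensions $w$ of $v$ to $L_{s}$; this comes from factoring the minimal polynomial of $\alpha$ over $K^{\hens}$ and matching the irreducible factors with conjugacy classes of extensions of $v$ under $G_{K^{\hens}}$. Second, henselization is immediate, so it preserves value groups and residue fields. Together these give $[L_{s}:K]=\sum_{w}[L_{s,w}^{\hens}:K^{\hens}]$, with $e$ and $f$ unchanged in each summand. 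The purely inseparable part has a unique extension of the valuation, by the observation $v(x)=\tfrac1p v(x^{p})$. Its degree is a power of $p$. Moreover, $e$ and $f$ for it are powers of $p$, because $x^{p^{n}}\in K$ for every $x$ in that extension. All three quantities (degree, $e$, $f$) are multiplicative in towers, so it suffices to prove the claim for a finite separable extension of a henselian field $(K,v)$. Indeed, a product of powers of $p$ is again one, and the divisibility statements compose.

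\emph{The henselian Galois case.} Pass to a finite Galois extension $N/K$ containing $L$. Since defects multiply in towers and the local degree is multiplicative, the claim for $N/K$ and for $N/L$ yields it for $L/K$: the ratio for $L/K$ is a quotient of the ratio for $N/K$ by the ratio for $N/L$, both powers of $p$, and it is an integer since $ef\le[L:K]$. So let $G=\mathrm{Gal}(N/K)$, with the unique extension $w$ of $v$. We use the standard chain $G\supseteq T\supseteq V$ of inertia and ramification groups, and three facts about it.
\begin{enumerate}
\item The map $G/T\to\mathrm{Aut}(Nw/Kv)$ is surjective, and the residue extension of $N^{T}/K$ is the separable closure of $Kv$ in $Nw$, with trivial ramification. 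So $[N^{T}:K]=[Nw:Kv]_{s}$.
\item There is an injection $T/V\hookrightarrow\mathrm{Hom}(wN/vK,(Nw)^{\times})$ with image of order prime to $p$, and this accounts exactly for the prime-to-$p$ part of $e$. This gives $[N^{V}:N^{T}]=(wN:vK)_{p'}$.
\item $V$ is a $p$-group (trivial if $p=1$).
\end{enumerate}
Hence $[N:K]=[Nw:Kv]_{s}\,(wN:vK)_{p'}\,|V|$. The remaining contributions to $e$ and $f$, namely the $p$-part of $e$ and the inseparable degree of $Nw/Kv$, are powers of $p$ and divide $|V|$. So $[N:K]/(ef)$ is a power of $p$.

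The main obstacle is the second fact: the surjectivity and the exact order of $T/V$. We would prove it by constructing the pairing $(\sigma,\gamma)\mapsto \res(\sigma(x)/x)$ for $w(x)=\gamma$, and producing enough roots $x^{1/n}$ via henselianity (Hensel's Lemma applied to $X^{n}-a$, with $n$ prime to $p$) to show that the pairing is nondegenerate.
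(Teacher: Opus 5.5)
Your proposal is correct. The paper gives no proof of this lemma; it is recalled as classical background, so there is no argument in the source to compare with. Your outline is the standard textbook proof: go from global to local via $L_{s}\otimes_{K}K^{h}\cong\prod_{w}L^{h}_{s,w}$ and the immediacy of henselization, then prove Ostrowski's lemma from $[N:K]=|G/T|\,|T/V|\,|V|$. Your reading of $p^{d(w/v)}$ as the local quotient $[L^{h}_{w}:K^{h}]/(e(w/v)f(w/v))$ is genuinely needed. The single displayed equation does not determine the individual $d(w/v)$ when $v$ has several extensions, which is the ambiguity the paper's own remark about $d(w/v)$ alludes to.

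A few details should be made explicit in a write-up.

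\begin{itemize}
\item The divisibility of $|V|$ by the $p$-part of $e$ times the inseparable residue degree is not a separate fact. It follows from your formula for $[N:K]$ together with $ef\le[N:K]$, since both quantities are powers of $p$.

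\item The equality $|G/T|=[Nw:Kv]_{s}$ uses that $Nw/Kv$ is normal.

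\item In the nondegeneracy step, Hensel's lemma cannot be applied to $X^{n}-a$ with $a\in K$ and $v(a)=n\gamma$, and such roots need not lie in $N$ anyway. Instead, take $x\in N$ with $w(x)=\gamma$ and write $x^{n}=au$ with $u\in\mathcal{O}_{w}^{\times}$. Choose $m$ with $\res(u)^{p^{m}}$ separable over $Kv$, and integers $r,s$ with $rn+sp^{m}=1$. Replacing $x$ by $xu^{-r}$ gives $x^{n}=au^{sp^{m}}$, whose unit part has residue in the residue field of $N^{T}$. Divide by some $b\in\mathcal{O}_{w}^{\times}\cap N^{T}$ with the same residue, and extract an $n$-th root of the resulting $1$-unit by Hensel's lemma. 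This yields $y\in N$ with $w(y)=\gamma$ and $y^{n}\in N^{T}$.

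Now suppose every $\sigma\in T$ paired trivially with $\gamma$. Then each $\sigma(y)/y$ would be an $n$-th root of unity with residue $1$, hence equal to $1$ since $p\nmid n$. So $y\in N^{T}$ and $\gamma\in wN^{T}=vK$, a contradiction. This is exactly where the ``trivial ramification'' part of your first fact is used.
\end{itemize}
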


The quantity $d(w/v)$ is called the \define{defect}%
\footnote{Sometimes the defect is defined to be the quantity $p^{d(w/v)}$ rather than $d(w/v)$.}
	, nontrivial when it is positive (considered to be zero when $p=1$, though ill-defined by the above equation).
Accordingly, we say an extension $(L,w)/(K,v)$ is \define{defectless} when $p^{d(w/v)}=1$,
and $(K,v)$ is [\define{separably}] \define{defectless} when all of its finite [separable] extensions are defectless.
A related notion is that of an \define{immediate} extension: $(L,w)/(K,v)$ is immediate when $e(w/v)=f(w/v)=1$.
A valued field $(K,v)$ is [[\define{separably}] \define{algebraically}] \define{maximal} when it admits no nontrivial immediate [[separably] algebraic] extensions.
There are some basic implications between these properties:
\begin{align*}
	\text{Maximal}
	&\implies
	\text{henselian and defectless}\\
	&\implies
	\text{algebraically maximal}\\
	&\implies
	\text{separably algebraically maximal}\\
	&\implies
	\text{henselian}.
\end{align*}
In residue characteristic zero, since every extension is defectless, every henselian valued field is algebraically maximal.
Also, every field of Hahn series $\Hs{k}{\Gamma}$ is maximal.

The counterpart of the fundamental inequality for transcendental extensions is the following lemma.
In the lemma we denote by $\mathrm{trdeg}(L/K)$ (respectively $\mathrm{trdeg}(Lw/Kv)$)
the transcendence degree of the field extension $L/K$ (respectively $Lw/Kv$).
We also denote by $\mathrm{rrk}(wL/vK)$ the rational rank of the abelian group $wL/vK$,
i.e.,~the dimension of $\QQ\otimes(wL/vK)$ as a $\QQ$-vector space.

 \begin{lemma}[{Abhyankhar's inequality, \cite{Abhyankar}}]\label{lem:Abhyankar_inequality}
	 Let $(L,w)/(K,v)$ be an extension of valued fields.
	 Then
	 \begin{align*}
		 \mathrm{trdeg}(L/K)&\geq\mathrm{trdeg}(Lw/Kv)+\mathrm{rrk}(wL/vK),
	 \end{align*}
 \end{lemma}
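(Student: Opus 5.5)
We reduce Abhyankar's inequality to the case where $L$ is finitely generated over $K$. The key case is a simple transcendental extension, where we prove that the left-hand side is $1$ and the right-hand side is at most $1$. If the transcendence degree of $L/K$ is infinite there is nothing to prove, so assume $n:=\mathrm{trdeg}(L/K)<\infty$.

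\emph{Reduction to finitely many elements.} Choose $a_{1},\ldots,a_{r}\in\Ring{w}$ whose residues are algebraically independent over $Kv$. Choose $b_{1},\ldots,b_{s}\in L^{\times}$ whose values are $\QQ$-linearly independent modulo $vK$, meaning no nontrivial relation $\sum m_{j}w(b_{j})\in vK$ with $m_{j}\in\ZZ$ holds. It suffices to show $r+s\leq n$ for every such finite choice. Both properties persist in the subextension $K'=K(a_{1},\ldots,a_{r},b_{1},\ldots,b_{s})$ equipped with the restricted valuation. Moreover $\mathrm{trdeg}(K'/K)\leq n$. So we may replace $L$ by $K'$, and it is enough to show that $a_{1},\ldots,a_{r},b_{1},\ldots,b_{s}$ are algebraically independent over $K$, which gives $r+s\leq n$.

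\emph{Core claim.} We show directly that no nonzero polynomial $F\in K[X_{1},\ldots,X_{r},Y_{1},\ldots,Y_{s}]$ vanishes at $(\bar a,\bar b)$. Write
\[
F=\sum_{\mu,\nu}c_{\mu\nu}X^{\mu}Y^{\nu}.
\]
Group the monomials by the exponent $\nu$ of $Y$. Distinct $\nu$ give values $w(\bar b^{\nu})$ that lie in distinct cosets of $vK$, by the independence assumption. Hence for distinct $\nu$ the quantities
\[
\min_{\mu}v(c_{\mu\nu})+w(\bar b^{\nu})
\]
are pairwise distinct, once we know that $w\bigl(\sum_{\mu}c_{\mu\nu}\bar a^{\mu}\bigr)=\min_{\mu}v(c_{\mu\nu})$.

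That equality is the residue-field step. Divide $\sum_{\mu}c_{\mu\nu}X^{\mu}$ by a coefficient of minimal value. The result is a polynomial over $\Ring{v}$ with nonzero reduction over $Kv$. Evaluated at $\bar a$, its residue is that reduction evaluated at the residues of $\bar a$, which is nonzero by algebraic independence. So its value is exactly $0$.

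Now $F(\bar a,\bar b)$ is a sum of finitely many terms with pairwise distinct values. By the strict ultrametric inequality its value equals the minimum of these values, which is finite. Hence $F(\bar a,\bar b)\neq0$.

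\emph{Where care is needed.} The main step is making the distinct-cosets argument precise. If $\nu\neq\nu'$, then $w(\bar b^{\nu})-w(\bar b^{\nu'})=\sum_{j}(\nu_{j}-\nu'_{j})w(b_{j})$. This is a nontrivial integer combination, so it does not lie in $vK$. On the other hand $\min_{\mu}v(c_{\mu\nu})-\min_{\mu}v(c_{\mu\nu'})$ does lie in $vK$. Therefore the two totals cannot coincide. Everything else is bookkeeping, and the argument uses neither Lemma~\ref{lem:fundamental_inequality} nor henselianity.
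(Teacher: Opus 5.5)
The paper does not prove this lemma; it only quotes it from \cite{Abhyankar}, so there is no internal proof to compare with. Your argument is correct, and it is the standard textbook proof: lift a transcendence basis of $Lw/Kv$ together with a family of values that are rationally independent modulo $vK$, then use the ultrametric inequality to show the lifts are algebraically independent over $K$. The key point is that distinct $\nu$ give totals differing by an element outside $vK$, and you handle this correctly.

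A few presentational points:
\begin{itemize}
\item Your opening sentence announces a reduction to simple transcendental extensions. You never carry this out and do not need it, so delete it.
\item Passing to $K'=K(a_{1},\ldots,a_{r},b_{1},\ldots,b_{s})$ is superfluous. The core claim already gives algebraic independence inside $L$, hence $r+s\leq n$ directly.
\item When grouping by $\nu$, first discard those $\nu$ for which $\sum_{\mu}c_{\mu\nu}X^{\mu}$ is the zero polynomial, before taking minima.
\item To pass from ``$r+s\leq n$ for every finite choice'' to the stated inequality, say explicitly that this bound forces $\mathrm{trdeg}(Lw/Kv)$ and $\mathrm{rrk}(wL/vK)$ to be finite. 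Also note that your condition (no nontrivial integer combination of the $w(b_{j})$ lies in $vK$) is exactly linear independence of their images in $\QQ\otimes(wL/vK)$. You may then take $r$ and $s$ equal to these two numbers.
\end{itemize}
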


\subsection{First-order languages of valued fields}

There are numerous candidates for a first-order language of valued fields.
In my view, there is no particularly canonical choice
(our syntactic choices are always rather arbitrary).
Nevertheless the literature suggests three among the leading contenders:
\begin{itemize}
\item
$\Lval^{1}=\Lring\cup\{O\}$,
i.e.,~the expansion of $\Lring$ by a unary predicate which we interpret in a valued field $(K,v)$ by the valuation ring $\Ring{v}$.
\item
$\Lval^{2}=\Lring\cup\{\leq_{v}\}$, where $\leq_{v}$ is the symbol for a binary relation interpreted by $x\leq_{v}y\Leftrightarrow v(x)\leq v(y)$.
This is one of the languages in which the theory $\ACVF$ of algebraically closed valued fields admits quantifier-elimination, see for example \parencite{Robinson_1956}.
\item
$\Lval=(\Lring^{\bfK},\Lring^{\bfk},\Loag^{\bfG};\bfval,\bfres)$,
i.e.,~the language with three sorts
$\bfK$, $\bfk$, and $\bfG$,
each equipped with
$\Lring$,
$\Lring$,
and
$\Loag$ (the language of ordered abelian groups),
Moreover, we equip $\Lval$ with two function symbols $\bfval$ (from sort $\bfK$ to $\bfG$) and $\bfres$ (from sort $\bfK$ to $\bfk$).
Of course, the first of these is interpreted as the valuation, and the second as the residue map, for example extended to be constantly zero outside of the valuation ring.
\end{itemize}

For many (but not all) purposes, these languages are equivalent:
the same sets are definable in each case.
There is a caveat:
the value group and residue field appear explicitly (as sorts) in $\Lval$,
but only as definable quotients in $\Lval^{1}$ and $\Lval^{2}$,
and moreover the complexity of the formulas defining certain definable sets may depend on the language.

In the rest of this text, I will view valued fields as $\Lval$-structures.

\subsection{Residue characteristic zero and limits as $p\rightarrow\infty$}

We begin with the $\Lval$-theory $\VF$ of valued fields.
Let $\H$ denote the $\Lval$-theory that extends $\VF$ and axiomatizes the class of henselian valued fields:
its axioms (beyond those of $\VF$) demand, for each $n\in\NN$, that the definition of ``henselianity''
(in the form of Definition~\ref{def:henselian})
holds for polynomials $f$ of degree $\leq n$.
We let $\Hz$ denote the extension of $\H$ by axioms
\begin{align*}
	v(\underbrace{1+\ldots+1}_{p\text{-times}})=0,
\end{align*}
for each $p\in\PP$.

The original theory of Ax, Kochen, and Ershov,
as introduced above,
applies to all models of the theory $\Hz$,
not just to those coming from ultraproducts of local fields.
Perhaps the most classical formulation is the following,
which we label by ``$\AKE^{\equiv}$'' simply because it relates to the binary relation of elementary equivalence, symbolically~``$\equiv$''.
Later we will see formulations that are more general insofar as they apply to wider classes of valued fields, they relate to other relations between structures, and they allow certain enrichments of the languages.

\begin{theorem}[{$\AKE^{\equiv}$ for $\Hz$}]\label{thm:AKE_basic}
Let $(K,v),(L,w)\models\Hz$.
Then
\begin{align*}
	\underbrace{(K,v)\equiv(L,w)}_{\text{in }\Lval}
	&\Longleftrightarrow
	\underbrace{vK\equiv wL}_{\text{in }\Lring}
	\text{ and }
	\underbrace{\Gamma_{v}\equiv\Gamma_{w}}_{\text{in }\Loag}.
\end{align*}
\end{theorem}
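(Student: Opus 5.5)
The statement should be read with the residue fields on the left, i.e.\ $Kv\equiv Lw$ in $\Lring$, and the value groups $vK\equiv wL$ in $\Loag$. The forward implication is immediate. The residue field and the value group are sorts of $\Lval$, so every $\Lring$-sentence about $\bfk$ and every $\Loag$-sentence about $\bfG$ is already an $\Lval$-sentence about $(K,v)$.

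For the converse I would reduce to a back-and-forth between saturated models.

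\emph{Reduction.} Replace $(K,v)$ and $(L,w)$ by $\aleph_{1}$-saturated (or $\kappa$-saturated, of equal cardinality under a suitable set-theoretic assumption) elementary extensions; ultrapowers suffice. These are still models of $\Hz$, and their residue fields and value groups are elementarily equivalent and saturated. By Keisler--Shelah, or by saturation plus equal cardinality, we may then fix isomorphisms $\sigma:Kv\to Lw$ and $\tau:vK\to wL$.

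\emph{Enrichment.} To keep track of the residue and value maps I would expand both fields by angular component maps $\ac$. A sufficiently saturated valued field admits a cross-section $s:vK\to K^{\times}$, and then $\ac(x)=\res(x/s(v(x)))$. Proving the $\ac$-enriched statement is stronger, because $\Lval$-equivalence follows by restriction.

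\emph{Back-and-forth system.} Let $\mathcal{S}$ be the set of $\Lval(\ac)$-isomorphisms $f:E\to F$ between small valued subfields $E\subseteq K$ and $F\subseteq L$ such that $f$ restricted to residue fields is compatible with $\sigma$, $f$ restricted to value groups is compatible with $\tau$, and $f$ commutes with $\ac$. Since residue characteristic is zero, the prime subfields are $\QQ$ with trivial valuation, so $\mathcal{S}\neq\emptyset$. I would show $\mathcal{S}$ has the back-and-forth property in three extension steps, each performed inside the saturated model.

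\begin{enumerate}
\item \textbf{Henselization.} Pass from $E$ to $E^{\hens}$, which embeds uniquely by the universal property.
\item \textbf{Residue field and value group.} Enlarge the residue field of $E$ to all of $Kv$ and the value group to all of $vK$.
 \begin{itemize}
 \item For a residue-transcendental $\alpha$, lift to any $a$ with $\ac(a)=\alpha$.
 \item For a residue-algebraic $\alpha$, lift a root via Hensel's Lemma. The residue characteristic zero makes the reduced minimal polynomial separable, so the lift exists.
 \item For a value-group element $\gamma$, adjoin the section value $s(\gamma)$. If $n\gamma\in vE$, take an $n$-th root using henselianity and the fact that $n$ is a unit.
 \end{itemize}
Abhyankar's inequality (Lemma~\ref{lem:Abhyankar_inequality}) together with the usual computation gives the value group and residue field of each such simple extension, and hence the uniqueness of its isomorphism type.
\item \textbf{Immediate extensions.} Once $E$ is henselian with full residue field and full value group, any $a\in K\setminus E$ generates an immediate extension. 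By Kaplansky, $a$ is a pseudo-limit of a pseudo-Cauchy sequence in $E$ with no limit in $E$. That sequence cannot be of algebraic type, because henselian fields of residue characteristic zero are algebraically maximal (as recalled after Lemma~\ref{lem:fundamental_inequality}). So it is of transcendental type. Its image under $f$ has a pseudo-limit $b\in L$ by $\aleph_{1}$-saturation, since countable cofinal subsequences suffice. Kaplansky's lemma then says $E(a)\cong F(b)$ via $a\mapsto b$, as valued fields and compatibly with $\ac$.
\end{enumerate}

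By symmetry this yields the back-and-forth property, hence $(K,v)\equiv(L,w)$.

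I expect the main obstacle to be step~3, together with the bookkeeping that keeps $\ac$ (and therefore $\sigma,\tau$) coherent across all steps. Kaplansky's uniqueness for transcendental-type pseudo-Cauchy sequences must be checked to respect angular components. In an immediate extension this should follow because $\ac$ of an element of $E(a)$ is eventually determined by $\ac$ of elements of $E$ along the sequence. I would verify this first. If it fails, I would drop $\ac$ and instead carry only $\sigma,\tau$ with compatibility of the $\mathrm{RV}$-structure.
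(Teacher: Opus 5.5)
Your reading of the statement is right (it should say $Kv\equiv Lw$ in $\Lring$ and $vK\equiv wL$ in $\Loag$), and a Pas-style back-and-forth with angular components is a legitimate strategy. The gap is that Steps~2 and~3 are incompatible as written. Step~2 enlarges $E$ until $Ev=Kv$ and $vE=vK$. In an $\aleph_{1}$-saturated model these are uncountable, being infinite interpretable sets, so $E$ is no longer small. Step~3 then needs a pseudo-limit in $L$ of the image of a pseudo-Cauchy sequence. That sequence is indexed with the cofinality of $\{v(a-e)\mid e\in E\}$, which need not be countable once $E$ is uncountable. The claim that ``countable cofinal subsequences suffice'' is false for such sequences. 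A pseudo-limit of a sequence of cofinality $\mu$ realizes a partial type with $\mu$ parameters, and $\aleph_{1}$-saturation does not provide this when $\mu>\aleph_{0}$. So the forth step, and with it the whole back-and-forth system, is not established.

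The standard repair keeps the domains countable and never makes the residue field or value group full. Given $f:E\to F$ with $E$ countable and $a\in K$, use your residue and value steps to build countable henselian fields $E=E_{0}\subseteq E_{1}\subseteq\cdots$, each carrying an extension of $f$. Require that $E_{n+1}v$ contains $E_{n}(a)v$ and $\ac(E_{n}^{\times})$, and that $vE_{n+1}\supseteq vE_{n}(a)$. Then $E^{*}=\bigcup_{n}E_{n}$ is countable and henselian, and $E^{*}(a)=\bigcup_{n}E_{n}(a)$ is immediate over $E^{*}$. The relevant pseudo-Cauchy sequence can be taken of length $\omega$, so $\aleph_{1}$-saturation of $L$ now gives $b$.

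The condition $\ac(E^{\times})\subseteq Ev$ is also what your torsion value-group step silently uses. Suppose $v(e)=n\gamma$ and you want $a$ with $v(a)=\gamma$, $a^{n}\in E$ and $\ac(a)=1$. You must first pick a unit $d\in E$ with $\res(d)=\ac(e)$, then apply Hensel's Lemma to $s(\gamma)^{n}d/e$, which has residue $1$, and set $a=s(\gamma)/u$. This was automatic only because you had made $Ev=Kv$.

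With these corrections your argument proves the stronger $\Lac$-statement. It differs from the paper's route, which avoids the piecewise construction. The paper chooses a cross-section and a coefficient field $k\subseteq K$ and sandwiches $k(t^{vK})^{\hens}\subseteq K\subseteq\Hs{k}{vK}$ with both inclusions immediate. The residue field and value group are thus absorbed at once, and only the elementarity of immediate extensions (via Kaplansky) remains.
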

\begin{proof}[{Sketch of the proof}]
	The implication $(\Longrightarrow)$ is trivial, since the value group and residue field are interpretable in each valued field.
It remains to argue for the converse implication.
Passing to an ultrapower if necessary,
we may assume that there is a cross-section
$\chi_{v}:vK\rightarrow K^{\times}$,
i.e.,~a group homomorphism that is a right inverse of $v$.
Moreover, any model of $\Hz$ admits a subfield of representatives of its residue field,
i.e.,~a subfield $k\subseteq K$ such that the residue map $\res_{v}$ restricts to an isomorphism $k\rightarrow Kv$.
Identifying the image of $\chi_{v}$ with a multiplicative subgroup $t^{vK}\leq K^{\times}$,
we have $(k(t^{vK})^{\hens},v_{t})\subseteq(K,v)$,
which is an immediate extension, and 
the map $n\mapsto t^{n}$ is a cross-section for both $(k(t^{vK})^{\hens},v_{t})$ and $(K,v)$).
Above $K$, on the other hand, there is an immediate extension
$(K,v)\subseteq(\Hs{k}{vK},v_{t})$,
and again $n\mapsto t^{n}$ is a cross-section for both valued fields.
This latter inclusion comes from Kaplansky's theory.
Thus, it suffices to show that any immediate extension of henselian valued fields in equal characteristic zero is an elementary extension in the language $\Lval$.
For abstract model theoretic reasons (specifically, back-and-forth arguments) it suffices to show the {\em a priori} weaker statement that each such extension is existentially closed, which again follows from Kaplansky's theory
\parencite{Kaplansky}.
In residue characteristic zero every simple immediate extension of a henselian valued field is transcendental, and its isomorphism type is determined by a suitable pseudo-Cauchy sequence.
\end{proof}

Indeed, this sketch proof shows that 
every $(K,v)\models\Hz$ is existentially closed in its unique maximal immediate extension.

The argument illustrates a pattern:
we combine underlying structural principles to describe certain valued fields
(with strong hypotheses, for example~maximality or completeness)
up to isomorphism.
We then combine this with back-and-forth arguments to find isomorphisms or embeddings between sufficiently rich elementary extensions of arbitrary models of the given theory/theories.
In fact, we don't always need to work with the strong hypotheses $vK\cong wL$ and $Kv\cong Lw$.
These isomorphism arguments are often compatible with residue field and value group extensions,
meaning that there are underlying embedding lemmas, on which I will elaborate below.

Firstly, there are many corollaries, even of this basic statement:
We begin by observing that this proves the required elementary equivalence between ultraproducts of local fields.

\begin{corollary}
Let $\ult$ be a nonprincipal ultrafilter $\ult$ on $\PP$. 
We have
\begin{align*}
	\prod_{p\in\PP}(\QQ_{p},v_{p})/\ult
	&\equiv
	\prod_{p\in\PP}(\ps{\FF_{p}},v_{t})/\ult.
\end{align*}
\end{corollary}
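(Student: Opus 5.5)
The plan is to deduce the corollary directly from Theorem~\ref{thm:AKE_basic} by checking that both ultraproducts are models of $\Hz$, and that their residue fields and value groups coincide (up to isomorphism, hence elementary equivalence). Write $(K,v):=\prod_{p}(\QQ_{p},v_{p})/\ult$ and $(L,w):=\prod_{p}(\ps{\FF_{p}},v_{t})/\ult$, each regarded as an $\Lval$-structure, the ultraproduct being taken sortwise.

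\textbf{Both ultraproducts are models of $\Hz$.} By Łoś's Theorem, every $\Lval$-sentence true in almost all factors holds in the ultraproduct. The theory $\H$ is axiomatized by sentences (one for each bound $n$ on degrees) which hold in every factor, since each $(\QQ_{p},v_{p})$ and each $(\ps{\FF_{p}},v_{t})$ is henselian by Lemma~\ref{lem:Hensel}. Hence $(K,v)$ and $(L,w)$ are henselian valued fields. For the axioms of residue characteristic zero, fix a prime $q$. The sentence $v(1+\cdots+1)=0$ (with $q$ summands) holds in $(\QQ_{p},v_{p})$ and in $(\ps{\FF_{p}},v_{t})$ for every $p\neq q$. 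The set of such $p$ is cofinite, so it lies in the nonprincipal ultrafilter $\ult$, and Łoś gives the axiom in both ultraproducts. Thus $(K,v),(L,w)\models\Hz$.

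\textbf{Value groups and residue fields.} Because the language $\Lval$ has separate sorts $\bfk$ and $\bfG$, taken sortwise, the residue field of the ultraproduct is the ultraproduct of the residue fields, and likewise for the value groups. The maps $\bfres$ and $\bfval$ are also computed coordinatewise; this is a routine check from the definition of $\sim_{\ult}$. Consequently:
\begin{itemize}
\item $Kv\cong\prod_{p}\FF_{p}/\ult\cong Lw$;
\item $vK\cong\ZZ^{\ult}\cong wL$, since each factor has value group $\ZZ$.
\end{itemize}
Isomorphic structures are elementarily equivalent. So $Kv\equiv Lw$ in $\Lring$ and $vK\equiv wL$ in $\Loag$.

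\textbf{Conclusion.} Theorem~\ref{thm:AKE_basic}, in the direction $(\Longleftarrow)$, now yields $(K,v)\equiv(L,w)$ in $\Lval$.

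The only step needing any care is verifying residue characteristic zero, which is exactly where nonprincipality of $\ult$ is used. Everything else is bookkeeping with Łoś's Theorem.
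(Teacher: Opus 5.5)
Your proposal is correct and is essentially the paper's argument: by Łoś and nonprincipality of $\ult$, both ultraproducts are models of $\Hz$ with value group $\ZZ^{\ult}$ and residue field $\prod_{p}\FF_{p}/\ult$, so Theorem~\ref{thm:AKE_basic} applies. You also rightly read the residue-field condition of that theorem as $Kv\equiv Lw$ in $\Lring$, which is what its displayed statement intends.
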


Combined with Łoś's Theorem, we obtain the asymptotic resolution to Artin's conjecture which began this story:

\begin{corollary}
	For all $\Lval$-sentence $\varphi$ there exists $n_{\varphi}\in\NN$ such that
	\begin{align*}
		(\QQ_{p},v_{p})\models\varphi&\Leftrightarrow(\ps{\FF_{p}},v_{t})\models\varphi,
	\end{align*}
	for all $p>n_{\varphi}$.
	Moreover the function $\varphi\mapsto n_{\varphi}$ may be chosen to be computable.
\end{corollary}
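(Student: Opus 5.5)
Fix an $\Lval$-sentence $\varphi$. I will argue in two stages: first I obtain the existence of $n_{\varphi}$ by compactness, using the previous Corollary and Łoś's Theorem. Then I upgrade this to a computable bound by a proof-search argument, which needs an effective axiomatization of the relevant theory.

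For existence, suppose for contradiction that the set $A=\{p\in\PP\mid (\QQ_{p},v_{p})\models\varphi\not\Leftrightarrow(\ps{\FF_{p}},v_{t})\models\varphi\}$ is infinite. Then $A$ belongs to some nonprincipal ultrafilter $\ult$ on $\PP$. Split $A$ as $A_{1}\cup A_{2}$, where on $A_{1}$ the sentence $\varphi$ holds in $\QQ_{p}$ but fails in $\ps{\FF_{p}}$, and $A_{2}$ is the reverse. One of $A_1, A_2$ lies in $\ult$. By Łoś's Theorem, $\varphi$ then holds in exactly one of the two ultraproducts, which contradicts the previous Corollary.

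For computability, I would introduce the recursively axiomatized $\Lval$-theory $T$ consisting of:
\begin{itemize}
\item $\Hz$;
\item axioms stating that the value group is a $\ZZ$-group, i.e.\ a model of $\Th(\ZZ)$ in $\Loag$, which is Presburger arithmetic and is recursively axiomatized;
\item axioms stating that the residue field is a pseudofinite field of characteristic zero, recursively axiomatized by Ax's work.
\end{itemize}
By Theorem~\ref{thm:AKE_basic}, the completions of $T$ are determined by the residue field theory, and Ax shows that this in turn is determined by the algebraic numbers it contains. The key claim is then: $T\vdash\varphi\leftrightarrow\psi$ for some sentence $\psi$ that is a Boolean combination of residue-field sentences. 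Given that, $T\vdash\neg\chi_{p}$ for the characteristic sentences $\chi_{p}$, and more precisely each equivalence $\varphi\leftrightarrow\psi$ uses only finitely many axioms. So we can search effectively through proofs from $T$ together with the axioms
\begin{align*}
v(\underbrace{1+\cdots+1}_{p\text{ times}})=0\quad\text{for } p\leq n
\end{align*}
for a proof of $\varphi\leftrightarrow\psi$, with $\psi$ a residue-field sentence. Only finitely many such characteristic axioms appear in the proof, and we take $n_{\varphi}$ to exceed the largest $p$ they mention. For $p>n_{\varphi}$, both $\QQ_{p}$ and $\ps{\FF_{p}}$ satisfy the remaining finitely many axioms used. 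Indeed, they are henselian with value group $\ZZ$ and residue field $\FF_{p}$; finite fields satisfy the finitely many pseudofiniteness axioms involved once $p$ is large, possibly after enlarging $n_{\varphi}$ effectively. Hence both satisfy $\varphi\leftrightarrow\psi$, and $\psi$ has the same truth value in each, since both residue fields are $\FF_{p}$.

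The main obstacle is making this search provably terminate. A cleaner route avoids $\psi$ altogether: search simultaneously for an $n$ and a proof, from the finite fragment of $\Hz$ with the characteristic axioms for $p\le n$ plus "value group $\equiv\ZZ$" axioms, of
\begin{align*}
\varphi\leftrightarrow\varphi^{\ast},
\end{align*}
where $\varphi^{\ast}$ is the residue-field translation. Existence of such a proof follows from Theorem~\ref{thm:AKE_basic} by compactness: in every model of $T$, the truth of $\varphi$ depends only on the residue field theory, so $\varphi$ is equivalent modulo $T$ to a residue-field sentence. The delicate point is justifying this last compactness step, namely that equivalence depending only on the residue field yields a single residue-field sentence. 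I would handle it by a standard separation-by-compactness argument on the space of residue-field completions.
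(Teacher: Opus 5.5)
Your proof is correct and follows the paper's route. Existence of $n_{\varphi}$ is exactly Łoś's Theorem applied to the preceding ultraproduct corollary, and your ``cleaner route'' for computability is the proof-search form of the computable sentence-by-sentence reduction $\varphi\mapsto\varphi^{0}$ modulo $\Hz$ that the paper records shortly afterwards; fixing the value group theory to $\Th(\ZZ)$ just makes $\varphi^{0}$ a pure residue-field sentence. Discard the first attempt, and run the separation-by-compactness step modulo $\Hz$ plus the $\ZZ$-group axioms rather than modulo your $T$: the pseudofiniteness axioms are unnecessary, and a proof that used them would need an effective Lang--Weil bound to say which $\FF_{p}$ satisfy them.
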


Returning to the generality of $\Hz$,
we may think schematically about the completions of $\Hz$ as parametrized by the pairs of completions of $\OAG$ and of $\Fth^{0}$, the theories of ordered abelian groups and of fields of characteristic zero, respectively:
\begin{align*}
	\Hz&\overset{\sim}{\longrightarrow}\OAG\times\Fth^{0}.
\end{align*}
This even applies to those theories extending $\Hz$ that are not necessarily complete.
In other words, there is a computable function
\begin{align*}
    \varphi\mapsto\varphi^{0}
\end{align*}
from $\Lval$-sentences to the positive Boolean combinations of $\Loag$ and $\Lring$ sentences
so that
\begin{align*}
    \Hz\models\varphi\leftrightarrow\varphi^{0},
\end{align*}
i.e.,~$\varphi$ and $\varphi^{0}$ are equivalent in every model of $\Hz$.
This might be viewed as a ``sentence-by-sentence'' version of the \AKEtext~for $\Hz$.
Since this function is computable, we have the following corollary.

\begin{corollary}\label{cor:Hz_decidability}
Let $(K,v)$ be a henselian valued field of equal characteristic zero.
Then
the theory of $(K,v)$ in the language $\Lval$ of valued fields
is decidable
if and only if
the following two conditions are satisfied:
\begin{itemize}
\item
the theory of $Kv$ in the language $\Lring$ of rings is decidable;
\item
the theory of $vK$ in the language $\Loag$ of ordered abelian groups is decidable.
\end{itemize}
\end{corollary}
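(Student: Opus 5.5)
The plan is to use the computable reduction $\varphi\mapsto\varphi^{0}$ from $\Lval$-sentences to positive Boolean combinations of $\Loag$- and $\Lring$-sentences, with $\Hz\models\varphi\leftrightarrow\varphi^{0}$. This reduction comes from Theorem~\ref{thm:AKE_basic}, and the direction ``$\Leftarrow$'' is essentially an algorithm built on it.

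For ``$\Leftarrow$'', suppose $\Th(Kv)$ and $\Th(vK)$ are decidable. Given an $\Lval$-sentence $\varphi$, first compute $\varphi^{0}$. Since $(K,v)\models\Hz$, we have $(K,v)\models\varphi$ if and only if $(K,v)\models\varphi^{0}$. Write $\varphi^{0}$ as a positive Boolean combination of atoms $\alpha_{i}$, each an $\Loag$-sentence about the sort $\bfG$ or an $\Lring$-sentence about the sort $\bfk$. The truth of an $\Loag$-atom in $(K,v)$ is just its truth in $vK$, and likewise the truth of an $\Lring$-atom is its truth in $Kv$. So we decide each atom with the given oracles and evaluate the Boolean combination.

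The main point to justify is the existence and computability of $\varphi\mapsto\varphi^{0}$, which the paper asserts after Theorem~\ref{thm:AKE_basic}. If I had to prove it, I would run a compactness argument. Let $T_{\varphi}$ be the set of positive Boolean combinations $\psi$ of $\Loag$/$\Lring$ sentences with $\Hz\models\psi\to\varphi$. By $\AKE^{\equiv}$, the truth of $\varphi$ in a model depends only on the pair $(\Th(vK),\Th(Kv))$. Hence $\Hz\cup\{\varphi\}$ and $\Hz\cup\{\neg\varphi\}$ cannot both be consistent with the same pair of complete theories. Compactness then yields a finite separating combination, which can be taken positive after absorbing negations into the atoms. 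For computability, $\Hz$ is recursively axiomatized, so we enumerate proofs from $\Hz$ until we find some $\psi$ with $\Hz\vdash\varphi\leftrightarrow\psi$. The search terminates because such a $\psi$ exists.

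For ``$\Rightarrow$'', the residue field and value group are sorts of $\Lval$. Any $\Lring$-sentence $\alpha$ translates effectively into an $\Lval$-sentence $\alpha^{\bfk}$ relativized to the sort $\bfk$, with $Kv\models\alpha$ if and only if $(K,v)\models\alpha^{\bfk}$. The same holds for $\Loag$-sentences and the sort $\bfG$. So a decision procedure for $\Th(K,v)$ yields decision procedures for $\Th(Kv)$ and $\Th(vK)$.

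I expect the only nontrivial step to be the compactness argument producing $\varphi^{0}$, since everything else is bookkeeping over interpretations.
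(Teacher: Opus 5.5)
Your proposal is correct and follows the paper's route. The paper likewise derives ``$\Leftarrow$'' from the computable sentence-by-sentence reduction $\varphi\mapsto\varphi^{0}$ with $\Hz\models\varphi\leftrightarrow\varphi^{0}$, and ``$\Rightarrow$'' from the fact that $\bfk$ and $\bfG$ are sorts of $\Lval$. Your compactness argument for the existence of $\varphi^{0}$, together with the proof search over the recursively axiomatized $\Hz$ for computability, is sound and fills in what the paper only asserts.
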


I wrote, above, of a general embedding principle;
here it is:

\begin{lemma}[{Underlying embedding lemma}]\label{lem:embedding_Hz}
Let $(K_{1},v_{1}),(K_{2},v_{2})\models\Hz$ be common extensions of a valued field $(K,v)$.
Suppose that $v_{1}K_{1}/vK$ is torsion-free.
Suppose moreover that we are given
an~$\Lring$-embedding
$\varphi_{\bfk}:K_{1}v_{1}\rightarrow K_{2}v_{2}$ over $Kv$,
and an~$\Loag$-embedding
$\varphi_{\bfG}:v_{1}K_{1}\rightarrow v_{2}K_{2}$ over $vK$.
Then, there exists an
elementary extension $(K_{2},v_{2})\preceq(K_{2}^{*},v_{2}^{*})$
and an embedding of
$\varphi:(K_{1},v_{1})\rightarrow(K_{2}^{*},v_{2}^{*})$
over $(K,v)$
that induces $\varphi_{\bfk}$ and $\varphi_{\bfG}$ between the residue fields and value groups.
\end{lemma}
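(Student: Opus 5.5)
Since $(K_{2},v_{2})$ is henselian, we may replace $(K,v)$ by its henselization inside $K_{1}$ and inside $K_{2}$. By the universal property these are identified over $K$, and henselization is immediate, so the hypotheses are unchanged. We first pass to a sufficiently saturated elementary extension $(K_{2}^{*},v_{2}^{*})$, say $|K_{1}|^{+}$-saturated. Then we build $\varphi$ by Zorn's lemma: consider partial embeddings $\psi:(F,u)\to(K_{2}^{*},v_{2}^{*})$ over $(K,v)$, where $K\subseteq F\subseteq K_{1}$ and $F$ is henselian, whose induced maps on residue field and value group are restrictions of $\varphi_{\bfk}$ and $\varphi_{\bfG}$. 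A maximal such $\psi$ exists, since unions of chains are fine (a directed union of henselian fields is henselian). The goal is to show $F=K_{1}$.

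The extension is carried out in three stages, each adding one element. First, residue field: if $Fu\subsetneq K_{1}v_{1}$, pick $\bar a\in K_{1}v_{1}\setminus Fu$. If $\bar a$ is transcendental over $Fu$, lift it to $a\in\Ring{v_{1}}$. Then $F(a)$ carries the Gauss valuation, and we map $a$ to any lift of $\varphi_{\bfk}(\bar a)$ in $K_{2}^{*}$. If $\bar a$ is algebraic (hence separable, since we are in characteristic zero), lift its minimal polynomial and use Hensel's Lemma in both fields. Matching roots then yields an unramified extension compatible with $\varphi_{\bfk}$, and we take the henselization afterwards. Second, value group: if $\gamma\in v_{1}K_{1}\setminus uF$, we use that $v_{1}K_{1}/vK$ is torsion-free. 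In fact we need $v_{1}K_{1}/uF$ torsion-free, which needs care; alternatively we treat torsion $n\gamma\in uF$ via henselianity. If $n\gamma\in uF$ with $n$ minimal, write $n\gamma=u(b)$. After adjusting $b$ by a residue-field element (available because residue fields are already matched and $\varphi_{\bfk}$ is given), an $n$-th root of $b$ exists in $K_{1}$, and Hensel's Lemma (characteristic zero, so $X^{n}-c$ is separable) gives a matching root in $K_{2}^{*}$. If $\gamma$ is rationally independent from $uF$, pick $a$ with $v_{1}(a)=\gamma$. Then $F(a)$ is valued by $v(\sum c_{i}a^{i})=\min(u(c_{i})+i\gamma)$, and we send $a$ to any $a'$ with $v_{2}^{*}(a')=\varphi_{\bfG}(\gamma)$.

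Third, once $Fu=K_{1}v_{1}$ and $uF=v_{1}K_{1}$, the extension $K_{1}/F$ is immediate. In residue characteristic zero, $F$ henselian implies it is algebraically maximal, so any $a\in K_{1}\setminus F$ is transcendental and is the pseudo-limit of a pseudo-Cauchy sequence $(a_{\rho})$ in $F$ of transcendental type, without a limit in $F$. By Kaplansky's theory \parencite{Kaplansky}, the isomorphism type of $F(a)$ over $F$ is determined by $(a_{\rho})$. So it suffices to find a pseudo-limit of $(\psi(a_{\rho}))$ in $K_{2}^{*}$. The conditions $v(x-\psi(a_{\rho}))=v(\psi(a_{\rho+1})-\psi(a_{\rho}))$ form a type over fewer than $|K_{1}|^{+}$ parameters, and it is finitely satisfiable because the sequence is pseudo-Cauchy (take a later term). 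Saturation then provides the pseudo-limit, and we henselize.

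I expect the main obstacle to be this immediate step: using Kaplansky's theorem that transcendental-type pseudo-Cauchy sequences determine the extension, and justifying that in residue characteristic zero every pseudo-Cauchy sequence without a limit in a henselian $F$ is of transcendental type. A secondary delicate point is the torsion case in the value group stage, together with the ordering of the stages: matching residue fields first is what makes the $n$-th root adjustments possible, and the torsion-freeness hypothesis is what rules out incompatible choices of roots.
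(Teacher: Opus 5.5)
Your residue-field step and your immediate (Kaplansky) step are sound, but the value-group step has a genuine gap, exactly where you wrote ``needs care''. The paper states this lemma without proof, so this assessment is on the argument's own merits.

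In the torsion case you assert that once $bc$ has an $n$-th root in $K_{1}$, Hensel's Lemma produces an $n$-th root of $\psi(bc)$ in $K_{2}^{*}$. Recall $n\gamma=u(b)$, and take $d\in K_{1}$ and $d_{2}\in K_{2}^{*}$ of values $\gamma$ and $\varphi_{\bfG}(\gamma)$. Then $\psi(bc)$ is an $n$-th power exactly when the residue of $\psi(bc)/d_{2}^{n}$ is an $n$-th power in $K_{2}^{*}v_{2}^{*}$. This forces $\varphi_{\bfk}(\overline{d^{n}/b})$ and $\overline{d_{2}^{n}/\psi(b)}$ to lie in the same coset of the $n$-th powers of $(K_{2}^{*}v_{2}^{*})^{\times}$. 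Nothing guarantees this, because it depends on your earlier free choices.

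Concretely, let $K=\QQ$ be trivially valued, $K_{1}=\ps{\QQ}$, $K_{2}=\Gps{\QQ}{s}$, with $\varphi_{\bfk}$ and $\varphi_{\bfG}$ the identities. Your rationally-independent step permits $t^{2}\mapsto 2s^{2}$. After that, no extension of $\psi$ can ever have $t$ in its domain: $t$ would have to go to a square root of $2s^{2}$, which is not a square in $K_{2}$, hence not in any $K_{2}^{*}\succeq K_{2}$. So a maximal element of your poset need not have $F=K_{1}$.

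Note also that your torsion argument never invokes the hypothesis that $v_{1}K_{1}/vK$ is torsion-free. If it were right, it would prove the lemma without that hypothesis, which is false. Take $K=\ps{\QQ}$, $K_{1}=K(\sqrt{t})$, $K_{2}=K(\sqrt{2t})$, with the identity maps on $\QQ$ and $\tfrac{1}{2}\ZZ$. Then $t$ is not a square in $K_{2}$, so not in any elementary extension, and no embedding over $K$ exists.

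The missing idea is to arrange that the torsion case never arises. Admit into the Zorn poset only those $\psi$ for which $v_{1}K_{1}/uF$ is torsion-free. This is where the hypothesis is used: it holds for $F=K^{h}$. The condition survives unions of chains, and it survives your residue-field and immediate steps, since they do not change $uF$.

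In the value-group step, first arrange $Fu=K_{1}v_{1}$. Then adjoin not a single element but the whole pure hull $\Delta$ of $uF+\ZZ\gamma$ in $v_{1}K_{1}$. One element at a time cannot work in general: if $v_{1}K_{1}/vK\cong\QQ$, adjoining any single new value creates torsion.

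Since $\Delta/uF$ is torsion-free of rank one, it is the increasing union of the cyclic groups generated by the classes of some $\delta_{0}=\gamma,\delta_{1},\delta_{2},\ldots$ with $\delta_{i}\equiv m_{i}\delta_{i+1}$ modulo $uF$. Using $Fu=K_{1}v_{1}$ and Hensel's Lemma in $K_{1}$, choose $a_{i}\in K_{1}$ with $v_{1}(a_{i})=\delta_{i}$ and $a_{i+1}^{m_{i}}=a_{i}f_{i}$ for suitable $f_{i}\in F$.

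Now consider the countable set of conditions $y_{i+1}^{m_{i}}=y_{i}\psi(f_{i})$ and $v(y_{i})=\varphi_{\bfG}(\delta_{i})$. It is finitely satisfiable in $K_{2}^{*}$: pick $y_{n}$ of the right value and define $y_{n-1},\ldots,y_{0}$ downwards. So saturation realizes it. Then $a_{i}\mapsto y_{i}$ extends $\psi$ to $\bigcup_{i}F(a_{i})$, since each $F(a_{i})$ is purely transcendental with valuation determined by $u$ and $\delta_{i}$. From there $\psi$ extends to the henselization, whose value group $\Delta$ is pure in $v_{1}K_{1}$, so the invariant is preserved.
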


\begin{figure}[ht]
	\begin{align*}
  	\xymatrix@=4.0em{
		(K_{1},v_{1})
        \ar@{-->}[r]^{\varphi}
&
		(K_{2}^{*},v_{2}^{*})
&
		K_{1}v_{1}
		\ar@{->}[r]^{\varphi_{\bfk}}
&
		K_{2}v_{2}
&
		v_{1}K_{1}
		\ar@{->}[r]^{\varphi_{\bfG}}
&
		v_{2}K_{2}
\\
(K,v)
	\ar@{-}[u]_{}
  	\ar@{-}[ur]^{}
&
&
		Kv
	\ar@{-}[u]_{}
	\ar@{-}[ur]_{}
&
&
		\Gamma_{v}
	\ar@{-}[u]_{}
	\ar@{-}[ur]_{}
&
  	}
	\end{align*}
  \caption{Underlying embedding lemma}
\end{figure}
This kind of embedding lemma yields a richer family of resplendent \AKEtextS.
Let
$\Lk$ be any expansion of $\Lring$,
let
$\LGamma$ be any expansion of $\Loag$.
The corresponding expansion of $\Lval$,
obtained by expanding the sort $\bfk$ to have the language $\Lk$,
expanding $\bfG$ to have $\LGamma$,
and nothing else in addition,
is denoted $\Lval(\Lk,\LGamma)$.
Such an language is a \define{$(\bfk,\bfGamma)$-expansion} of $\Lval$.

\begin{definition}\label{def:AKE}
Let $T$ be an $\Lang$-theory of valued fields,
where $\Lang=\Lval(\Lk,\LGamma)$,
and let $\square\in\{\equiv,\equiv_{\exists},\preceq,\preceq_{\exists}\}$.
We say the class $\Mod_{\Lang}(T)$ of $\Lang$-structures that are models of $T$ is an
\define{$\AKE^{\square}$-class}
for $(\Lang,\Lk,\LGamma)$ if the following holds:

Let $(K,v)$ and $(L,w)$ be two models of $T$,
and additionally suppose $(K,v)\subseteq(L,w)$ in case $\square$ is either $\preceq$ or $\preceq_{\exists}$.
Then
$(K,v)\square(L,w)$
in $\Lang$
if and only if
the following two conditions are satisfied:
\begin{itemize}
\item
$Kv\square Lw$
in $\Lk$;
\item
$vK\square wL$
in $\LGamma$.
\end{itemize}
\end{definition}

The following theorem is an expression of ``resplendent'' \AKEtextS~for $\Hz$.

\begin{theorem}[{$\AKE$ for $\Hz$}]\label{thm:Hz}
Let $\Lang=\Lval(\Lk,\LGamma)$ be a $(\bfk,\bfG)$-expansion of $\Lval$
and let $\square\in\{\equiv,\equiv_{\exists},\preceq,\preceq_{\exists}\}$.
The class $\Mod_{\Lang}(\Hz)$ of $\Lang$-structures that are models of $\Hz$
an
$\AKE^{\square}$-class
for $(\Lang,\Lk,\LGamma)$.
\end{theorem}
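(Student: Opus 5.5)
The plan is to derive all four cases of Theorem~\ref{thm:Hz} from the underlying embedding lemma (Lemma~\ref{lem:embedding_Hz}) by the standard back-and-forth method, working in saturated models. In each case the forward implication is immediate, because the sorts $\bfk$ and $\bfG$, with their full languages $\Lk$ and $\LGamma$, are part of the $\Lang$-structure. So it suffices to prove the converses. I would first reduce $\equiv_{\exists}$ to $\preceq_{\exists}$ and $\equiv$ to $\preceq$. For this I pass to a common substructure, namely the prime valued field (the henselization of $\QQ$ with the trivial valuation, which embeds in every model of $\Hz$). Then $(K,v)\equiv(L,w)$ becomes an elementary-equivalence statement over a common base, and it follows from the $\preceq$-type statement applied to suitable elementary extensions. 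Alternatively I can run the same back-and-forth directly with empty base.

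The core case is $\square=\preceq$. Suppose $(K,v)\subseteq(L,w)$ are models of $\Hz$ with $Kv\preceq Lw$ in $\Lk$ and $vK\preceq wL$ in $\LGamma$. I replace $(K,v)$ by a $|L|^{+}$-saturated elementary extension $(K^{*},v^{*})$ in $\Lang$. By the hypotheses and the elementary joint embedding (amalgamation) in $\Lk$ and $\LGamma$ separately, I can then find $\Lk$-embeddings $Lw\to K^{*}v^{*}$ over $Kv$ and $\LGamma$-embeddings $wL\to v^{*}K^{*}$ over $vK$ that are \emph{elementary}. The goal is to lift these to an $\Lang$-embedding $L\to K^{*}$ over $K$ whose induced maps on residue field and value group are elementary. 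The key steps, in order, are as follows.
\begin{enumerate}
\item Since $vK\preceq wL$ as ordered groups, $wL/vK$ is torsion-free (pure subgroup), so Lemma~\ref{lem:embedding_Hz} applies. It produces, after possibly enlarging $K^{*}$ to a further elementary extension (harmless by saturation), an embedding $\varphi:(L,w)\to(K^{*},v^{*})$ over $(K,v)$ inducing the prescribed $\varphi_{\bfk},\varphi_{\bfG}$.
\item Since $\varphi_{\bfk},\varphi_{\bfG}$ are elementary in $\Lk,\LGamma$, $\varphi$ is an $\Lang$-embedding (the extra symbols live only on the auxiliary sorts). The image $\varphi(L)$ is then a substructure of $K^{*}$ whose residue field and value group are elementary substructures.
\item I iterate this in a back-and-forth system. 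I take $\mathcal{I}$ to be the family of partial $\Lang$-isomorphisms between small substructures of saturated models that are henselian of residue characteristic zero, whose residue-field and value-group parts are elementary, and whose value-group quotients are torsion-free. Using Lemma~\ref{lem:embedding_Hz} at each step, I show that $\mathcal{I}$ has the back-and-forth property. This yields the elementarity of each member, and in particular $K\preceq L$.
\end{enumerate}
The case $\preceq_{\exists}$ is identical, with existential closure in place of elementarity. Only the forth direction is needed: an existential formula over $K$ true in $L$ is witnessed by finitely many elements. I embed $L$ into a saturated elementary extension of $K$, using embeddings of residue field and value group that exist by $\preceq_{\exists}$ plus saturation, and then pull the witnesses back.

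The main obstacle is step 3, namely closing the system under one-element extensions while keeping the members henselian and the quotient of value groups torsion-free. A finitely generated extension of a model of $\Hz$ need not be henselian. I would handle this by always replacing a substructure by its henselization, which is immediate and embeds uniquely over the base. I would also replace it by its relative algebraic closure inside the ambient model. Then, because the residue field and value group are elementary substructures (relatively algebraically closed, respectively pure), the value-group quotient is torsion-free and the hypotheses of Lemma~\ref{lem:embedding_Hz} are restored at every stage.
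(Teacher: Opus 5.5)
Your strategy, deriving everything from Lemma~\ref{lem:embedding_Hz} by saturation and back-and-forth, is the route the paper intends. The paper gives no further proof than the remark that this embedding lemma ``yields'' the resplendent principles. Your forward directions, your $\preceq_{\exists}$ argument and your steps 1--2 are sound. The gap is exactly at the point you single out as the main obstacle. Replacing a substructure by the relative algebraic closure of its henselization does \emph{not} make its residue field and value group elementary substructures. It does not even make the value group pure, so the torsion-freeness hypothesis of the lemma is not restored at the next stage. Concretely, let $M=\Gps{\QQ(x)}{s}$ with the $s$-adic valuation, put $t=s^{2}/x$, and let $F$ be the relative algebraic closure of $\QQ(t)$ in $M$. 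Then $F$ is henselian, and $Fv=\QQ$ because it is algebraic over $\QQ$ and contained in $\QQ(x)$. Suppose some $d\in F$ had $v(d)=v(s)$. Then $d^{2}/t$ would be a unit of $F$ with residue some $r\in\QQ^{\times}$, so $d^{2}/(rt)$ would be a $1$-unit, hence a square in the henselian field $F$. Then $r/x=rt/s^{2}$ would be a square in $M$, hence in $\QQ(x)$, which is absurd. Thus $vM/vF\cong\ZZ/2\ZZ$, and Lemma~\ref{lem:embedding_Hz} cannot be applied with base $F$.

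The repair is easy; there are two options.
\begin{itemize}
\item Let the domains of your partial isomorphisms be small \emph{elementary} substructures $A\preceq M$; L\"owenheim--Skolem gives one containing $A\cup\{a\}$. Then $Av\preceq Mv$ and $vA\preceq vM$, so purity is automatic. Also, the $\bfk$- and $\bfG$-sorts of the substructure are exactly its residue field and value group, which is all the lemma talks about; an arbitrary $\Lang$-substructure instead carries $\Lk$- and $\LGamma$-closures there. Saturation of the target then lets you extend the partial elementary maps on these sorts before applying the lemma.
\item Drop the back-and-forth and use an elementary chain. Applying the lemma alternately produces $K\subseteq L\subseteq K_{1}\subseteq L_{1}\subseteq\cdots$ with $K\preceq K_{1}\preceq\cdots$ and $L\preceq L_{1}\preceq\cdots$. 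Every inclusion induces elementary maps on residue fields and value groups, so every value-group quotient is torsion-free. The common union is an elementary extension of both $K$ and $L$, whence $K\preceq L$.
\end{itemize}
A smaller point: $\equiv_{\exists}$ does not reduce to $\preceq_{\exists}$ as you suggest. Equality of existential theories only gives \emph{embeddings} of $Kv$ and $vK$ into elementary extensions of $Lw$ and $wL$. That is enough, however. The lemma over the trivially valued $\QQ$ (note that $vK$ is torsion-free) then embeds $K$ into an elementary extension of $L$, giving $\Th_{\exists}(K)\subseteq\Th_{\exists}(L)$. For $\equiv$, make these maps elementary and invoke the $\preceq$ case.
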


\noindent
Taking $\square$ to be $\equiv$,
and simplifiying to the setting
$\Lk=\Lring$
and $\LGamma=\Loag$,
we recover the classic form of the Ax--Kochen/Ershov principle.

\begin{corollary}\label{cor:Hz_resplendent_decidability}
Let $\Lang=\Lval(\Lk,\LGamma)$ be a $(\bfk,\bfG)$-expansion of $\Lval$ by countable languages $\Lk$ and $\LGamma$.
Let $(K,v)$ be an $\Lang$-structure expanding a model of $\Hz$.
Then
the $\Lang$-theory of $(K,v)$ is Turing equivalent%
\footnote{Two sets of natural numbers are \define{Turing equivalent} if each is decidable given access to an oracle-machine for the other.
The Turing sum of two sets $A,B$ may be taken to be the disjoint union of $2A$ and $2B+1$.}
to the Turing sum of the $\Lk$-theory of $Kv$ and the $\Loag$-theory of $vK$.
In particular
the $\Lang$-theory of $(K,v)$
is decidable
if and only if
the following two conditions are satisfied:
\begin{itemize}
\item
the $\Lk$-theory of $Kv$
is decidable;
\item
the $\LGamma$-theory of $vK$
is decidable.
\end{itemize}
\end{corollary}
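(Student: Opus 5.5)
The proof has two reductions, one for each direction of the Turing equivalence. From the theory of $(K,v)$ to the sum of the component theories I would use the interpretability of the sorts. From the sum to the theory of $(K,v)$ I would use a computable sentence-by-sentence reduction, obtained from Theorem~\ref{thm:Hz} with $\square$ taken to be $\equiv$.

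The easy direction is from $\Th_{\Lang}(K,v)$ to the other two theories. An $\Lk$-sentence about $Kv$ is literally an $\Lang$-sentence whose quantifiers are all restricted to the sort $\bfk$. The same holds for $\LGamma$-sentences about $vK$ and the sort $\bfG$. So membership in $\Th_{\Lk}(Kv)$ and in $\Th_{\LGamma}(vK)$ is decided by querying an oracle for $\Th_{\Lang}(K,v)$ on the obvious translation.

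For the other direction, I would first establish a resplendent version of the computable map $\varphi\mapsto\varphi^{0}$: for every $\Lang$-sentence $\varphi$ there is a Boolean combination $\varphi^{0}$ of $\Lk$-sentences and $\LGamma$-sentences with $\Hz\models\varphi\leftrightarrow\varphi^{0}$, where $\Hz$ is read as an $\Lang$-theory. Existence follows by a compactness argument. Let $\Sigma$ be the set of such Boolean combinations and consider models of $\Hz$ that satisfy the same sentences of $\Sigma$. Then their residue fields are $\Lk$-elementarily equivalent and their value groups are $\LGamma$-elementarily equivalent. By Theorem~\ref{thm:Hz} these models are therefore $\Lang$-elementarily equivalent. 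The standard separation lemma then gives, for each $\varphi$, some $\psi\in\Sigma$ with $\Hz\vdash\varphi\leftrightarrow\psi$.

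Computability is obtained by a search. Because $\Lk$ and $\LGamma$ are countable, we may fix a Gödel coding in which $\Hz$ is a recursively enumerable set of $\Lang$-sentences. Given $\varphi$, we enumerate all pairs consisting of a formal proof and some $\psi\in\Sigma$, and stop at the first proof of $\varphi\leftrightarrow\psi$. This search always terminates, by the existence just shown and Gödel's completeness theorem.

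To decide $\varphi$ with oracles for $\Th_{\Lk}(Kv)$ and $\Th_{\LGamma}(vK)$, we compute $\varphi^{0}$, written as a Boolean combination of atoms $\alpha_{i}$ (in $\Lk$) and $\beta_{j}$ (in $\LGamma$). We query each $\alpha_{i}$ against $\Th_{\Lk}(Kv)$ and each $\beta_{j}$ against $\Th_{\LGamma}(vK)$, and evaluate the resulting truth table. Since $(K,v)\models\Hz$, this value is the truth value of $\varphi$ in $(K,v)$. The two oracles together form the Turing sum, so this proves the Turing equivalence. The statement about decidability follows immediately: a Turing sum is decidable exactly when both summands are, and a set Turing equivalent to a decidable set is decidable.

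The main obstacle is the compactness step establishing the reduction to $\Sigma$. It needs care in two places. The type-space separation has to be set up over the multi-sorted language, which is legitimate because Theorem~\ref{thm:Hz} is stated for arbitrary $(\bfk,\bfG)$-expansions. Effectivity also requires $\Hz$ to be recursively enumerable. It is: $\Hz$ is axiomatized by $\VF$, the henselianity scheme and the residue characteristic zero scheme, all of which are computable lists, and no axioms in the new symbols are needed.
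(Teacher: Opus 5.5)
Your proposal is correct and is essentially the argument the paper intends: Theorem~\ref{thm:Hz} with $\square$ taken to be $\equiv$ gives the computable reduction $\varphi\mapsto\varphi^{0}$ to Boolean combinations of $\Lk$- and $\LGamma$-sentences, via compactness and a proof search over the recursive axiomatization of $\Hz$, just as the paper derives Corollary~\ref{cor:Hz_decidability}; the other reduction comes from the sorts $\bfk$ and $\bfG$ being part of the language. You were also right to read the ``$\Loag$-theory of $vK$'' in the first clause as the $\LGamma$-theory, since for a proper expansion $\LGamma$ the literal version fails (for example, add a predicate naming a noncomputable subset of $vK=\ZZ$).
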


We can strengthen the embedding lemma to removing the hypothesis that $v_{1}K_{1}$ is torsion free at the expense of 
adding to the language an angular component map,
i.e.,~a multiplicative map
$\ac:K^{\times}\rightarrow k_{v}^{\times}$
extending the restriction of $\res_{v}$ to $\Ring{v}^{\times}$.
This enables Pas's quantifier-elimination result for this theory:

\begin{theorem}[{\cite[Theorem 4.1]{Pas}}]
	The theory $\Hz$ eliminates quantifiers over $\bfK$ in $\Lac$,
    meaning every $\Lac$-formula is equivalent modulo $\Hz$ (plus suitable axioms for the angular component map) to one with quantifiers only over the sorts $\bfK$ and $\bfG$.
\end{theorem}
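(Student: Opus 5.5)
The plan is the standard route to relative quantifier elimination: a back-and-forth criterion for elimination of field-sort quantifiers, driven by an embedding lemma in the language $\Lac$ that strengthens Lemma~\ref{lem:embedding_Hz}. By the usual model-theoretic test, it suffices to prove the following. Let $(M,\ac_M)$ and $(N,\ac_N)$ be models of $\Hz$ with angular components, with $N$ $|M|^{+}$-saturated. Let $A\subseteq M$ be a substructure whose $\bfK$-sort is a subfield $E$, but whose $\bfk$- and $\bfG$-sorts are the full sorts $Mv$ and $vM$. Let $f:A\to N$ be an embedding that is elementary when restricted to the pair of sorts $(\bfk,\bfG)$. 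Then $f$ extends to an embedding of $M$ into $N$. Requiring the residue and value-group sorts to be entirely present is exactly what allows quantifiers over $\bfk$ and $\bfG$ to remain in the final formulas.

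First, enlarge $E$ inside $M$ step by step, always keeping $\ac$ compatible.

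1. Pass to the henselization $E^{\hens}$. Its embedding is unique by the universal property, and since the extension is immediate in residue characteristic zero, $\ac$ is determined on it by $\res$.

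2. Make the residue field of $E$ equal to $Mv$. For an element of $Mv$ transcendental over the current residue field, lift it to a unit $a$. The map $f$ on the $\bfk$-sort tells us where its residue goes, and any lift in $N$ gives a valued-field embedding. For algebraic residue elements, use Hensel's lemma (Definition~\ref{def:henselian}) on a lifted separable minimal polynomial; residue characteristic zero makes it separable.

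3. Make the value group equal to $vM$. Take $\gamma\in vM\setminus vE$. If $\gamma$ has infinite order modulo $vE$, choose $a\in M$ with $v(a)=\gamma$. Saturation of $N$ gives $b$ with $v(b)=f(\gamma)$ and $\ac(b)=f(\ac(a))$, and $E(a)\to E(b)$ is then an embedding, since valuations of polynomials are computed monomially. If $n\gamma\in vE$ with $n$ minimal, this is the torsion case that Lemma~\ref{lem:embedding_Hz} excluded, and $\ac$ is what repairs it. Choose $c\in E$ with $v(c)=n\gamma$. Look for $a\in M$ with $v(a)=\gamma$ and $a^n=c\,u$, where $u$ is a unit whose residue is pinned down by the $\ac$ data: $\res(u)=\ac(a)^n/\ac(c)$, and $\ac(a)$ is an element of $Mv$ known to $f$. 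Hensel's lemma applied to $X^n-c\,u'$ lets us adjust $a$ so that it is an exact $n$-th root of an element of $E$ times a unit with a prescribed lift. Hensel again, in $N$, produces the matching root with the matching angular component. The extension $E(a)/E$ then has degree $n$, with ramification $n$, and the image is forced.

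Second, once $E$ is henselian with residue field $Mv$ and value group $vM$, the extension $M/E$ is immediate. Kaplansky's theory, as invoked in the proof of Theorem~\ref{thm:AKE_basic}, handles it. Each $a\in M\setminus E$ is a pseudo-limit of a pseudo-Cauchy sequence in $E$ with no pseudo-limit in $E$, and because residue characteristic is zero this sequence is of transcendental type. By saturation, $N$ contains a pseudo-limit of the image sequence, and Kaplansky's uniqueness gives $E(a)\cong f(E)(b)$ as valued fields. Angular components agree automatically, since in an immediate extension $\ac$ is determined by $\res$ after dividing by an element of $E$ of the same value. Iterating with Zorn's lemma finishes the extension of $f$.

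I expect the main obstacle to be the torsion step in the value group. There one must check that the choice of $\ac(a)$ is consistent, that the required $n$-th root exists in $N$ with exactly the prescribed angular component, and that the resulting extension is unique up to isomorphism. This is where $\ac$ is genuinely used.
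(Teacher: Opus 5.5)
Your proposal is correct. It follows the route the paper indicates just before the statement: strengthen Lemma~\ref{lem:embedding_Hz} in $\Lac$ so that the torsion-free hypothesis on the value group extension can be dropped (your torsion step, which is exactly where $\ac$ is needed), and combine this with the standard back-and-forth test for eliminating $\bfK$-quantifiers. One point worth making explicit: in the torsion step the unit $u$ can be chosen in $\mathcal{O}_{E}$ because you have already arranged $Ev=Mv$. The matching root in $N$ then comes from Hensel's lemma applied to a $1$-unit, and no saturation is needed there.
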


There is an incredible range of work developed from these underpinnings,
including
		cell decompositions (e.g.,~\cite{Pas}),
further quantifier-elimination results (e.g.,~\cite{Flenner}),
    elimination of imaginaries (e.g.,~\cite{HaskellHrushovskiMacpherson,RKV}),
		and the recent framework of Hensel minimality (see~\cite{CluckersHalupczokRideau-Kikuchi,CluckersHalupczokRideau-KikuchiVermeulen,Vermeulen}).

\section{Two other well-understood settings: finitely ramified and tame valued fields}

There are two (or perhaps three) other rather well-understood settings for the model theory of henselian valued fields, namely those that are finitely ramified (of mixed characteristic) and those that are tame (or separably tame).

\subsection{Finitely ramified henselian valued fields}
A valued field $(K,v)$ of mixed characteristic $(0,p)$ is \define{finitely ramified}
if the interval $[0,v(p))$ in the value group is finite.
The number of elements in this interval is the \define{initial ramification degree}, often denoted by~$e$.
We say $(K,v)$ is \define{unramified} if $e=1$.
The model theoretic analysis of henselian finitely ramified valued fields $(K,v)$ is (as claimed above) achieved by combining the \AKEtextS~in equal characteristic zero with a well-developed structure theory for complete discrete valuation rings of mixed characteristic.
The latter---the structure theory---is due to Cohen (e.g.~\cite{Cohen}) and numerous others including Mac Lane and Witt.
For a general reference, one may consult \cite[Chapter II]{Serre}.

The structure theory attains its cleanest form in the unramified case, with perfect residue fields:
for each prime number $p$,
there is the map
\begin{align*}
	W:\{\text{perfect fields of char $p$}\}&\rightarrow\{\text{unram.~CDVRs, mixed char.~$(0,p)$, perfect res.~field}\}\\
	k&\mapsto W[k],
\end{align*}
where $W[k]$ is the ring of Witt vectors over $k$,
which gives an equivalence of categories.
When we allow 
imperfect residue fields,
Cohen's structure theory still gives us an unramified
complete discrete valuation ring
$C[k]$ that is unique up to isomorphism (over $k$), but the isomorphism now fails to be canonical, and the categories cease to be equivalent.

In the unramified case, the above structure theory is enough,
and there is a full range of \AKEtextS.
For example, the theory of an unramified henselian valued field $(K,v)$ is axiomatized by $\H$, together with the theory of its residue field and value group (the latter necessarily discrete),
together with an axiom
\begin{align*}
 \forall x\;(v(x)\leq0\vee v(p)\leq v(x)),
\end{align*}
which expresses the condition ``unramified''.

Even in the finitely ramified (but not necessarily unramified) case,
it is possible to find \AKEtextS~reducing these theories entirely to the residue fields and value groups, at the expense of expanding those structures.
More precisely, in \cite{ADJ24}, a language $\Ldagger$ is identified which describes the structure induced on the residue field by a finitely ramified henselian valued field so that the following AKE principles hold.

\begin{theorem}[{\cite{ADJ24,}}]
Let $(K,v)$ and $(L,w)$ be two finitely ramified henselian valued fields of mixed characteristic $(0,p)$
of initial ramification $e$.
Then, we have
\begin{align*}
    \underbrace{(K,v) \equiv (L,w)}_{\text{in }\Lval} &\Longleftrightarrow
    \underbrace{Kv \equiv Lw}_{\text{in } \Ldagger} \text{ and } \underbrace{vK \equiv wL}_{\text{in }\Loag} \\[-0.4cm]
\intertext{and}
     \underbrace{(K,v) \equiv_\exists (L,w)}_{\text{in }\Lval} &\Longleftrightarrow
    \underbrace{Kv \equiv_{\exists^+} Lw}_{\text{in } \Ldagger},
\intertext{where $\equiv_{\exists}$ (respectively $\equiv_{\exists^+}$)
denotes equality of existential (respectively positive existential) theories.
In case $(K,v) \subseteq (L,w)$, we have}
     \underbrace{(K,v) \preceq_\exists (L,w)}_{\text{in }\Lval}
     &\Longleftrightarrow
    \underbrace{Kv \preceq_\exists Lw}_{\text{in }\Lring}
     \text{ and }
     \underbrace{vK \preceq_\exists wK}_{\text{in }\Loag}.
\end{align*}
\end{theorem}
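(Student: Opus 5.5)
The plan is to reduce the finitely ramified case to the equal characteristic zero theory (Theorem~\ref{thm:Hz} and Lemma~\ref{lem:embedding_Hz}), using coarsening together with the Cohen structure theory.

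\textbf{Setup.} Given a finitely ramified henselian $(K,v)$ of type $(0,p)$ with initial ramification $e$, pass to an $\aleph_1$-saturated elementary extension. Let $v_0$ be the finest coarsening of $v$ with residue characteristic zero, that is, the coarsening whose value group is $vK/\Delta$, where $\Delta$ is the convex hull of $\ZZ v(p)$. Then $(K,v_0)\models\Hz$. Its residue field $Kv_0$ carries the induced valuation $\bar v$, which has value group $\Delta\cong\ZZ$ (finite ramification forces $\Delta$ to be discrete of rank one). By saturation, $(Kv_0,\bar v)$ is complete, so it is a complete discretely valued field of type $(0,p)$ with ramification index $e$ and residue field $Kv$.

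\textbf{Equivalences.} For the $\equiv$ statement, apply Theorem~\ref{thm:Hz} to $(K,v_0)$ and $(L,w_0)$. We need $vK/\Delta\equiv wL/\Delta'$, which follows from $vK\equiv wL$ because $\Delta$ is definable from the distinguished element $v(p)$. We also need $Kv_0\equiv Lw_0$ as fields. The core step is therefore to show that two complete DVRs of type $(0,p)$ with ramification $e$ are elementarily equivalent once their residue fields are $\Ldagger$-equivalent. Here I would use Cohen's theorem: such a ring is $C[k][\pi]$ with $\pi$ a root of an Eisenstein polynomial of degree $e$ over the Cohen ring $C[k]$. The language $\Ldagger$ should be exactly rich enough to encode the residues of the Eisenstein coefficients, modulo the relevant power of $p$, as data on $k$ (for instance, predicates for $p$-independence and for the coefficients of the ``Witt-type'' expansions). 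With that in hand, a back-and-forth between $\aleph_1$-saturated models, where the residue fields are isomorphic as $\Ldagger$-structures, lifts to an isomorphism of the complete DVRs by the uniqueness of Cohen rings together with Krasner's lemma. This identification of the correct $\Ldagger$ and the lifting of isomorphisms is the main obstacle: the isomorphism $C[k]\cong C[k']$ is non-canonical, and one must check that $\Ldagger$-isomorphisms of residue fields suffice to match the Eisenstein polynomials up to a perturbation small enough for Krasner's lemma.

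\textbf{Existential statements.} For $\equiv_\exists$, follow the same pattern, but embed rather than equate. An existential sentence true in $K$ witnesses, after saturation, an embedding of a finitely generated substructure. The value group contributes nothing beyond discreteness, since any two such value groups with a least positive element share their existential theory. On the residue side, embeddability of the relevant complete DVR data into $Lw_0$ corresponds to positive existential $\Ldagger$-information, because embeddings of Cohen-type rings only need homomorphic images of the residue data. For $\preceq_\exists$ with $K\subseteq L$, note that $e$ is the same in both, so $v(p)$ and the Eisenstein data coincide. In this case plain $\Lring$ suffices: use Lemma~\ref{lem:embedding_Hz} for $v_0$, together with the fact that an existentially closed residue extension lifts to an existentially closed extension of complete DVRs containing the common uniformizer. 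That last fact comes from Hensel lifting of residue witnesses through $C[Kv]\subseteq C[Lw]$.
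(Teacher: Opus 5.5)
Your architecture matches the only indication the survey gives: saturate, coarsen to $v_0$ with $\Delta=\ZZ\gamma$, apply the AKE principle for $\Hz$, and feed in the structure theory of complete discretely valued fields. The survey has no proof of its own, only the citation \cite{ADJ24}, the ``conceptual equation'' from the introduction, and the remark that $\Ldagger$ describes the structure induced on $Kv$ by $(K,v)$. Still, there is a genuine gap exactly where the content of the theorem lies.

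\textbf{The main gap.} What has to be proved is that two complete discretely valued fields of type $(0,p)$ and ramification $e$ whose residue fields are isomorphic as $\Ldagger$-structures are isomorphic as valued fields. For the $\equiv_\exists$ clause you also need that a residue-field embedding preserving the positive $\Ldagger$-structure lifts to an embedding of the complete fields. You never define $\Ldagger$. You say it ``should be exactly rich enough'' for this and then label it the main obstacle. That assumes the key lemma rather than proving it, and the same goes for your claim that positive existential $\Ldagger$-information governs embeddings.

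The mechanism you sketch also fails as stated when $Kv$ is imperfect:
\begin{itemize}
\item elements of $C[k]$ have no canonical coordinates in $k$, since there is no Witt-vector expansion;
\item an isomorphism $k\cong k'$ lifts to $C[k]\cong C[k']$ in many inequivalent ways;
\item to use Krasner's lemma you must show that \emph{some} lift carries the Eisenstein polynomial of one field $p$-adically close to one defining the other, and that this is controlled by first-order data on $k$.
\end{itemize}
That this extra data is needed at all is already visible over $\FF_p$. Take $p$ odd and $u\in\ZZ_p^\times$ a non-square. Then $\QQ_p(\sqrt p)$ and $\QQ_p(\sqrt{up})$ have the same $e$, residue field and value group, but disagree on the sentence $\exists x\,(x^2=p)$.

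\textbf{Smaller points.}
\begin{itemize}
\item To recover $(K,v)$ from $(K,v_0)$ you need Theorem~\ref{thm:Hz} in its resplendent form, with $\Lk$ containing a predicate for $\Ring{\bar v}$ on the residue sort. Knowing $Kv_0\equiv Lw_0$ as fields is not enough.
\item $\Delta$ is not definable in $vK$; it is only a countable union of definable sets. So $vK/\Delta\equiv wL/\Delta'$ does not follow from definability. Obtain it instead from Keisler--Shelah isomorphisms of the value groups, which preserve the least positive element and hence $\Delta$.
\end{itemize}

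\textbf{The $\preceq_\exists$ clause.} This part is in better shape, because there the residue language is just $\Lring$. Note that $Kv\preceq_\exists Lw$ forces $Lw/Kv$ to be separable, since $p$-dependence is existential. With a common uniformizer and a separable residue extension, the lifting you need is the formal smoothness of the completion of $\Ring{\bar w}$ over that of $\Ring{\bar v}$. That is what ``Hensel lifting through $C[Kv]\subseteq C[Lw]$'' should be replaced by.

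For the value groups in this clause, add one remark. An embedding of $wL$ over $vK$ into the value group of a saturated elementary extension of $(K,v)$ fixes the least positive element. It therefore descends to an embedding of the quotients by $\Delta$, as required by the embedding lemma for $v_0$.
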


Briefly removing the assumption that the ramification is finite, there is the following influential AKE-style unpublished result of van den Dries:

\begin{theorem}[{van den Dries}]
Let $(K,v)$ and $(L,w)$ be two henselian valued fields of mixed characteristic $(0,p)$.
Then we have
\begin{align*}
\underbrace{(K,v)\equiv(L,w)}_{\text{in $\Lval$}}
	\Longleftrightarrow
	&\quad\quad
\underbrace{\Ring{v}/p^{n}\equiv\Ring{w}/p^{n}}_{\text{in $\Lring$}}
\quad\forall n\in\NN,\\
	&\quad\text{and}\quad
\underbrace{(vK,v(p))\equiv(wL,w(p))}_{\text{in $\Loag(\digamma)$}},
\end{align*}
where $\Loag(\digamma)$ is the expansion of $\Loag$ by a single constant symbol $\digamma$.
\end{theorem}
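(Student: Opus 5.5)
The plan is to deduce van den Dries's theorem from the $\AKE^{\equiv}$ principle for $\Hz$ (Theorem~\ref{thm:Hz}), via a coarsening argument, and with Łoś's theorem carrying the data. The forward implication is immediate. The ring $\Ring{v}/p^{n}$ is interpretable in $(K,v)$ uniformly in $n$, using $\Ring{v}$ modulo the definable ideal $p^{n}\Ring{v}$. Likewise $(vK,v(p))$ is interpreted via $\bfval$, with $\digamma$ named by the term $\bfval(p)$.

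For the converse, first pass to $\aleph_{1}$-saturated elementary extensions, for instance ultrapowers over a nonprincipal ultrafilter on $\NN$. This preserves both hypotheses, by Łoś's theorem and the interpretability just noted. Inside $\Gamma=vK$, let $\Delta$ be the convex hull of $\ZZ v(p)$, i.e.~the smallest convex subgroup containing $v(p)$. Let $\dot v$ be the coarsening of $v$ with value group $\Gamma/\Delta$.

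Then $(K,\dot v)$ has residue characteristic zero, since $\dot v(p)=0$. It is henselian, as a coarsening of a henselian valuation. So $(K,\dot v)\models\Hz$. Its residue field $K\dot v$ carries the induced valuation $\bar v$ with value group $\Delta$, and by saturation $(K\dot v,\bar v)$ is complete of rank one. Concretely, $\Ring{\bar v}$ is the $p$-adic completion, i.e.~$\varprojlim_{n}\Ring{v}/p^{n}$ modulo the ideal of elements of infinite value. The same construction applied to $L$ gives $\dot w$, and the data $(\Ring{\bar w},\Delta_{w})$.

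Theorem~\ref{thm:Hz} then reduces the task to two statements:
\begin{itemize}
\item $\Gamma/\Delta\equiv\Gamma_{w}/\Delta_{w}$ in $\Loag$;
\item $K\dot v\equiv L\dot w$, where the residue field is taken in a suitable expansion $\Lk$, namely as the valued field $(K\dot v,\bar v)$.
\end{itemize}
The resplendent form is what lets us carry this extra structure on $\bfk$. The first statement holds because $\Delta$ is definable in the pair $(\Gamma,v(p))$ by a type-definable/ind-definable family. More precisely, I would handle it as follows. Choose the ultrafilters so that the two saturated models have isomorphic pairs $(\Gamma,v(p))\cong(\Gamma_{w},w(p))$; this is possible by Keisler--Shelah, or by replacing the saturation step with a common ultrapower index. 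The isomorphism then carries $\Delta$ to $\Delta_{w}$.

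The main obstacle is the second statement: showing that the complete rank-one fields $K\dot v$ and $L\dot w$ are elementarily equivalent, or even isomorphic, given only $\Ring{v}/p^{n}\equiv\Ring{w}/p^{n}$ for each $n$. Here I would again use Keisler--Shelah simultaneously for the countably many structures. The many-sorted structure consisting of all the rings $\Ring{v}/p^{n}$, together with the reduction maps between them, is determined in ultrapowers up to isomorphism, because each sort is interpretable in $(K,v)$ in a uniform way. Isomorphic inverse systems $(\Ring{v}/p^{n})_{n}$, compatible with reductions, give isomorphic inverse limits. After quotienting by the infinitesimals, this yields $\Ring{\bar v}\cong\Ring{\bar w}$, hence an isomorphism of the valued fields $(K\dot v,\bar v)\cong(L\dot w,\bar w)$.

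The delicate point is arranging that the separate elementary equivalences for each $n$ can be amalgamated into a single compatible system. I would try to achieve this by building the system as a single multi-sorted structure whose theory is the union of the theories of the individual sorts, together with the reductions, which are definable from the ring structure of $\Ring{v}/p^{n+1}$. Then Theorem~\ref{thm:Hz} gives $(K,\dot v)\equiv(L,\dot w)$ in the enriched language. Since $v$ is definable from $\dot v$ together with $\bar v$ on the residue sort, this yields $(K,v)\equiv(L,w)$.
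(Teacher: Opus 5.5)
The paper gives no proof of this theorem (it is quoted as an unpublished result of van den Dries), so there is nothing to compare against line by line. Judged on its own, your argument is correct, and it is the standard one. It follows the same pattern as the paper's sketch of Theorem~\ref{thm:AKE-relative_subcompleteness}: Keisler--Shelah ultrapowers, then a coarsening, then lifting isomorphisms of residue rings to an isomorphism of the valued residue fields, then an AKE principle for the coarsening. The differences are that you use $\dot v=v_{p+}$ in place of $v_{\pi-}$, the inverse system $(\Ring{v}/p^{n})_{n}$ in place of $\Ring{v}/\pi$, and the resplendent Theorem~\ref{thm:Hz} (with $\bfk$ expanded by a predicate for $\Ring{\bar v}$) in place of the roughly tame embedding problem.

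One statement should be corrected: $(K\dot v,\bar v)$ is \emph{not} of rank one in general. Its value group is $\Delta=\Delta_{p+}$. In an $\aleph_1$-saturated model with dense value group there is $\gamma>0$ with $n\gamma<v(p)$ for all $n$, so $\Delta$ is not archimedean. Only the further induced valuation $\bar v_{p-}$, with value group $\Delta_{p+}/\Delta_{p-}$, has rank one (this is the Fact after Remark~\ref{rem:standard_decomposition}).

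You never actually use rank one, though. What you need is the following:
\begin{itemize}
\item $\Ideal{\dot v}=\bigcap_{n}p^{n}\Ring{v}$, because $\Delta$ is the convex hull of $\ZZ v(p)$; hence $\Ring{\bar v}=\Ring{v}/\bigcap_{n}p^{n}\Ring{v}$.
\item The map $\Ring{v}\to\varprojlim_{n}\Ring{v}/p^{n}$ is surjective. This is exactly where $\aleph_1$-saturation enters: you realize the countable type $\{v(x-a_{n})\geq nv(p)\}_{n}$.
\end{itemize}
Hence $\Ring{\bar v}\cong\varprojlim_{n}\Ring{v}/p^{n}$, with no further quotient needed, and your lifting step goes through.

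Finally, the ``delicate point'' dissolves. For $m\leq N$, the ring $\Ring{v}/p^{m}\cong(\Ring{v}/p^{N})/p^{m}(\Ring{v}/p^{N})$ and the reduction map are uniformly interpretable in $\Ring{v}/p^{N}$. So each sentence of the many-sorted language translates into an $\Lring$-sentence about a single $\Ring{v}/p^{N}$, and the hypotheses give elementary equivalence of the full inverse systems.

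To have the value-group isomorphism in the same pair of models, you have two options. Either add $(vK,v(p))$ as a further disjoint sort, or apply Keisler--Shelah twice in succession, since an isomorphism persists in ultrapowers by a common ultrafilter. In either case, take the ultrafilters countably incomplete so that $\aleph_1$-saturation is preserved.
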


There are extensions of this result by \cite{LeeLee}.
Other important results
(including
\cite{Basarab_henselian,Kuhlmann_amc,Flenner})
describe an AKE-theory of henselian valued fields of mixed characteristic with respect to their $\mathrm{RV}$-structures,
i.e.,~the quotients $K^{\times}/1+p^{n}\Ideal{v}$, for $n\in\NN$.

It is perhaps worth emphasising at this point the variety of structures that appear on the ``right hand side'' in the various \AKEtextS~under consideration.
Sometimes (as in the classic case) we are able to reduce the model theory of a valued field to its residue field and value group.
Other times, that simply does not hold, but instead we are able to reduce to one or many residue rings or $\mathrm{RV}$-structures.
In the Jahnke--Kartas principles, below, we will be again reducing to one special residue ring---namely $\Ring{v}/p$---and the value group.

\subsection{Tame valued fields}

The theory of tame valued fields was introduced and studied in
\cite{Kuhlmann_tame},
then extended to separably tame valued fields in
\cite{KuhlmannPal} and studied there and in \cite{Anscombe_lambda}.

\begin{definition}
A valued field $(K,v)$
of residue characteristic exponent $p$
is {\rm[}\define{separably}{\rm]} \define{tame}
if
\begin{itemize}
\item
it is henselian and [separably] defectless,
\item
$vK$ is $p$-divisible,
and
\item
$Kv$ is perfect.
\end{itemize}
\end{definition}
The classes of tame and separably tame valued fields are $\Lval$-axiomatizable.

The theories $\TVF$ of tame valued fields and $\STVF$ of separably tame valued fields have appeared in quite a range of recent work.
See
\cite{AF16,AJ_Henselianity,AJ_Cohen,AJ_NIP,AK,BK,Jahnke_expansions,JahnkeKartas,vanderSchaaf,Kartas_tame,Kartas_tilting,KuhlmannRzepka,Sinclair}.

\begin{theorem}[{\cite{Kuhlmann_tame}}]\label{thm:tame}
Let $\Lang=\Lval(\Lk,\LGamma)$ be a $(\bfk,\bfG)$-expansion of $\Lval$.
\begin{enumerate}
\item
Let $\square\in\{\preceq,\preceq_{\exists}\}$.
The class $\Mod(\TVF)$ of all tame valued fields 
is an $\AKE^{\square}$-class
in $\Lang$.
\item
Let $\square\in\{\equiv,\equiv_{\exists}\}$.
The class $\Mod(\TVF^{\eq})$ of all tame valued fields of equal characteristic
and an $\AKE^{\square}$-class
in $\Lang$.
\end{enumerate}
\end{theorem}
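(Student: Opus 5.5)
This follows the template of the proof sketch of Theorem~\ref{thm:AKE_basic}: reduce everything to an embedding lemma plus back-and-forth. The forward implications are trivial in each case, since the residue field and value group (with their $\Lk$- and $\LGamma$-structure) are sorts of $\Lang$. For the converse I would first prove a tame analogue of Lemma~\ref{lem:embedding_Hz}. Let $(K,v)$ be a defectless valued field, and let $(K_1,v_1),(K_2,v_2)$ be tame extensions of it. Assume $v_1K_1/vK$ is torsion-free and $K_1v_1/Kv$ is separable. Then every pair of embeddings $\varphi_{\bfk},\varphi_{\bfG}$ over $Kv$ and $vK$, with $(K_2,v_2)$ sufficiently saturated, lifts to a valued field embedding $\varphi:(K_1,v_1)\to(K_2,v_2)$ over $K$ inducing them.

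The embedding lemma would be proved in three steps.
\begin{enumerate}
\item Realize the value group and residue field data. Using torsion-freeness, adjoin elements with prescribed values that are algebraically independent over $K$. Using separability of $K_1v_1/Kv$, lift a separating transcendence basis of the residue extension, and lift separable algebraic residue elements via Hensel's Lemma, since the fields are henselian. The resulting subfield $K'\subseteq K_1$ has $v_1K'=v_1K_1$ and $K'v_1=K_1v_1$, and its image in $K_2$ is forced by saturation together with $\varphi_{\bfk},\varphi_{\bfG}$. Abhyankar's inequality (Lemma~\ref{lem:Abhyankar_inequality}) and the fundamental equality (Lemma~\ref{lem:fundamental_inequality}) show that $K'$ is defectless, being a function field with value data attaining equality (generalized Abhyankar valuations), so the henselization behaves well.
\item Pass to the relative algebraic closure of $K'$ in $K_1$. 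This is again tame, and the algebraic part of an extension of tame fields is immediate. For tame fields, algebraic maximality together with $p$-divisibility and perfection give that the relative algebraic closure is determined by henselization.
\item Handle the remaining immediate extension $(K_1,v_1)/(K'^{h},v_1)$. Here one needs that a tame field is existentially closed in every immediate extension, and then saturation of $K_2$ realizes the embedding.
\end{enumerate}

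Step 3 is the main obstacle: Kuhlmann's \emph{Henselian Rationality} theorem. It says that an immediate function field of transcendence degree one over a tame field is generated, up to henselization, by a single element. Once it holds, the existential closedness reduces to Kaplansky-style pseudo-Cauchy sequences of transcendental type, exactly as in the proof of Theorem~\ref{thm:AKE_basic}. I would attack it by taking a pseudo-Cauchy sequence without limit in $K$ and using the absence of defect and the $p$-divisibility to remove the Artin--Schreier obstructions that generally occur in positive characteristic. This is precisely where tameness is essential, since defect extensions would break the argument.

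With the embedding lemma in hand, part~1 follows by a standard back-and-forth between $\aleph_1$-saturated elementary extensions. For $(K,v)\subseteq(L,w)$ tame with $Kv\preceq Lw$ and $vK\preceq wL$, the residue field extension is regular (hence separable) and $wL/vK$ is torsion-free. So the embedding lemma applies over $K$, giving $\preceq_\exists$; iterating it gives $\preceq$, and the $\preceq_\exists$ case is the one-step version. For part~2 we need a common substructure. In equal characteristic $p$, the prime field $\FF_p$ with trivial valuation is defectless, and trivially $vK=0$ lies inside everything with torsion-free quotient. The residue extension over $\FF_p$ is separable because $\FF_p$ is perfect. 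So the same back-and-forth starting from $\FF_p$ gives $\equiv$, and running it only one step gives $\equiv_\exists$. In equal characteristic $0$ this is Theorem~\ref{thm:Hz}. Mixed characteristic is excluded because no such trivially valued common base exists.
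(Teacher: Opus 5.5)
\paragraph{Where the argument breaks.}
Your outer architecture matches Kuhlmann's proof, which the paper only cites: a relative embedding lemma over a defectless base with $v_1K_1/vK$ torsion-free and $K_1v_1/Kv$ separable, Henselian Rationality plus Kaplansky for the immediate part, a chain argument for $\preceq$, and the trivially valued prime field as base in equal characteristic. But Steps~1--2 of your embedding lemma are false as stated, and they hide the whole non-immediate difficulty.

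To use an Abhyankar-type argument you must keep the $x_i$ algebraically independent with rationally independent values. Then $vK'/vK$ is free on the $v_1(x_i)$, and $K'v_1$ is separable algebraic over $Kv(\bar y_j)$. Since $K_1$ is tame, however, $v_1K_1/vK$ is $p$-divisible, hence not free unless it is zero, and $K_1v_1$ is perfect. So $v_1K'=v_1K_1$ fails as soon as $v_1K_1\neq vK$, and $K'v_1=K_1v_1$ fails as soon as $K_1v_1/Kv$ is transcendental.

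Three consequences follow.
\begin{enumerate}
\item The relative algebraic closure of $K'$ in $K_1$ is not immediate over $K'^{h}$, and it is not ``determined by henselization''. It contains elements of value $v_1(x_i)/p^{n}$ and with residue $\bar y_j^{1/p^{n}}$, which generate wildly ramified or residually inseparable extensions of $K'^{h}$.
\item The image of $K'$ in $K_2$ is not ``forced''. If, say, $x^{1/\ell}\in K_1$, the image $x^{*}$ must have an $\ell$-th root in $K_2$, and $v_2(x^{*})=\varphi_{\bfG}(v_1(x))$ does not guarantee this.
\item Step~3 cannot start from $K'^{h}$. That field is not tame, so Henselian Rationality does not apply to it.
\end{enumerate}

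\paragraph{The missing idea.}
Let $K''$ be the relative algebraic closure of $K(x_i,y_j)$ in $K_1$. By Kuhlmann's lemma on relatively algebraically closed subfields of tame fields with algebraic residue extension, $K''$ is tame, with $vK''=v_1K_1$ and $K''v_1=K_1v_1$.

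To embed $K''$, use saturation of $K_2$ to reduce to finitely generated $E\subseteq K''$ containing finitely many $x_i,y_j$ over which $E$ is algebraic. Such an $E$ is a function field without transcendence defect. Hence $vE/vK$ is finitely generated and torsion-free, so free, and $Ev/Kv$ is finitely generated and separable.

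Now choose generators \emph{adapted to $E$}:
\begin{itemize}
\item elements $x'$ whose values form a basis of $vE/vK$;
\item lifts $y'$ of a separating transcendence basis of $Ev/Kv$;
\item a Hensel lift $z$ of a primitive element of the remaining separable algebraic part.
\end{itemize}
Then $E^{h}$ is an immediate algebraic extension of $K(x',y',z)^{h}$. The two coincide because $K(x',y')$ is defectless, hence so is the finite extension $K(x',y',z)^{h}$ of its henselization. This defectlessness is Kuhlmann's Generalized Stability Theorem, a deep result. Abhyankar's inequality and the fundamental equality only give the absence of transcendence defect, not defectlessness.

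With that theorem in hand, choose any images of $x',y'$ with the prescribed values and residues, together with a Hensel root for $z$. This gives an embedding of $E^{h}$ respecting $\varphi_{\bfk}$ and $\varphi_{\bfG}$.

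Only then does your Step~3 apply, over the tame field $K''$. There you should realize pseudo-limits directly in the saturated $K_2$, and induct on transcendence degree through tame relative algebraic closures. Invoking existential closedness of a tame field in its immediate extensions is not enough, because $K_2$ is not an elementary extension of the image of $K''$.
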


In \cite{KuhlmannPal}
the authors reformulated and strengthened
the underlying Embedding Lemma for tame valued fields to work as well for separably tame valued fields of finite (and fixed) imperfection degree, and this was then generalized more recently in \cite{Anscombe_lambda} to go through for arbitrary imperfection degrees.
For this purpose, we introduced a language
$\Llambda$ to extend $\Lring$
(and corresponding expansion $\Lvlambda$ of $\Lval$)
by including function symbols for the $p$-th roots of the coordinates of elements with respect to $p$-bases.
One then obtains
a range of ``separable \AKEtextS''.

In order to properly include the \AKEtextS~that have been proven for separably tame valued fields, we generalize the terminology of $\AKE$-classes as follows.
Let $T$ be a theory of valued fields in a language $\Lang$ that is an
$(\bfk,\bfG)$-expansion of $\Lvlambda$.
and let $\square\in\{\equiv,\equiv_{\exists},\preceq,\preceq_{\exists}\}$.
We say that the class $\Mod_{\Lang}(T)$ of models of $T$ is an
\define{$\sAKE^{\square}$-class}
if
whenever we are given two models $(K,v),(L,w)\models T$,
(where we additionally suppose that $(L,w)$ is a separable extension of $(K,v)$ in case $\square$ is either $\preceq$ or $\preceq_{\exists}$),
we have
$(K,v)\square(L,w)$
in $\Lvlambda$
if and only if
the following three conditions hold:
\begin{itemize}
\item
$Kv\square Lw$
in $\Lk$;
\item
$vK\square wL$
in $\LGamma$;
\item
$K$ and $L$ have the same elementary imperfection degree%
\footnote{The \define{elementary imperfection degree} of a field $k$ of characteristic $p$ is symbolically $\infty$ if $k$ is an infinite extension of its subfield $k^{(p)}$ of $p$-th powers, or otherwise is the natural number $\fraki$ such $p^{\fraki}=[k:k^{(p)}]$ if this is finite.
For the sake of consistency,
if $k$ is of characteristic zero, we say that the elementary imperfection degree is zero.}
\end{itemize}

\begin{theorem}[{\cite{Kuhlmann_tame,KuhlmannPal,Anscombe_lambda}}]\label{thm:STVFe}
Let $\Lang=\Lvlambda(\Lk,\LGamma)$ be a $(\bfk,\bfG)$-expansion of $\Lvlambda$
and
let $\square\in\{\equiv,\equiv_{\exists},\preceq,\preceq_{\exists}\}$.
The class $\Mod_{\Lang}(\STVF^{\eq})$ of all $\Lang$-structures which expand separably tame valued fields of equal characteristic is
an
$\sAKE^{\square}$-class.
\end{theorem}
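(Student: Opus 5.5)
The plan follows the pattern of the sketch proof of Theorem~\ref{thm:AKE_basic} and of Theorem~\ref{thm:Hz}. First I would prove a separable embedding lemma for $\STVF^{\eq}$, the analogue of Lemma~\ref{lem:embedding_Hz}, in the language $\Lvlambda$. Then a back-and-forth argument between saturated models converts it into each of the four principles.

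The embedding lemma should read as follows. Let $(K_1,v_1),(K_2,v_2)\models\STVF^{\eq}$ be separable extensions of a common defectless valued field $(K,v)$ with the same elementary imperfection degree, where $(K_2,v_2)$ is $|K_1|^{+}$-saturated. Suppose $v_1K_1/vK$ is torsion-free and $K_1v_1/Kv$ is separable. Then every pair of embeddings $\varphi_{\bfk}$, $\varphi_{\bfG}$ over $Kv$ and $vK$ lifts to an $\Lvlambda$-embedding $(K_1,v_1)\to(K_2,v_2)$ over $K$. I would prove it in four steps. First, reduce to $(K,v)$ henselian and relatively separably algebraically closed in $K_1$, using uniqueness of henselizations and the fact that defectless separable-algebraic extensions are controlled by residue field and value group. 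Second, choose a transcendence basis of $K_1/K$ consisting of elements whose values are rationally independent modulo $vK$, together with elements whose residues form a separating transcendence basis of $K_1v_1/Kv$. Realise these in $(K_2,v_2)$ by saturation, with the prescribed residues and values. Third, use the generalized stability theorem: valued function fields without transcendence defect are defectless. This shows the intermediate field $F$ is defectless, and its henselization can be matched inside $K_2$. Fourth, handle the remaining immediate part of $K_1/F$. Every separably tame field is separably algebraically maximal, so $F^{\hens}$ has no proper immediate separable-algebraic extensions. Elements of $K_1$ in immediate transcendental position are then limits of pseudo-Cauchy sequences of transcendental type, and by saturation they can be realised in $K_2$ following Kaplansky's theory. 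Throughout, the $\Lambda$-functions must be preserved, that is, $p$-independence must be respected. This is where the common elementary imperfection degree and the separability of $K_1/K$ are used: they let us extend a $p$-basis of $K$ to a $p$-basis of $K_1$ and send it to a $p$-independent set in $K_2$.

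Granting the lemma, the four principles follow. For $\preceq_{\exists}$: given $(K,v)\subseteq(L,w)$ separable with residue and value group extensions existentially closed, pass to a saturated elementary extension $(K^*,v^*)$ of $(K,v)$. Then embed $(L,w)$ over $K$ into it using the lemma, with $\varphi_{\bfk}$ and $\varphi_{\bfG}$ supplied by existential closedness in $\Lk$ and $\LGamma$ together with saturation. For $\preceq$: iterate in a back-and-forth system of partial isomorphisms between separable, relatively closed defectless substructures of saturated models, exploiting the resplendence of the lemma in $\Lk$ and $\LGamma$. For $\equiv$ and $\equiv_{\exists}$: a common substructure is needed to start from. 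In equal characteristic $p$ I would take the prime field $\FF_p$ with the trivial valuation, or in characteristic $0$ use Theorem~\ref{thm:Hz}. This common base is exactly why the statement is restricted to $\STVF^{\eq}$. The equality of elementary imperfection degrees is needed so that $p$-bases of matching size can be paired up.

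I expect the immediate transcendental step to be the main obstacle. It requires showing that $(K,v)$ is existentially closed in immediate function fields of transcendence degree one, i.e.\ a relative Kaplansky theorem in positive characteristic. In defect situations this is false in general, and here it must come from separable algebraic maximality combined with perfect residue fields and $p$-divisible value groups. My first attempt would be Kuhlmann's henselian rationality theorem: an immediate function field of transcendence degree one over a tame field is contained in the henselization of a rational function field $K(x)$. The key step would then be adapting that theorem to the separable setting, keeping track of $p$-bases.
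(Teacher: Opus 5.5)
Your overall architecture is the paper's: a separable embedding lemma in $\Lvlambda$, then back-and-forth, with the immediate part reduced to realising a pseudo-limit of a pseudo-Cauchy sequence of transcendental type via Kaplansky's theory and saturation. But you have put the difficulty in the wrong place, and the step that carries the new content is asserted rather than argued.

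The work you flag as the main obstacle is existential closedness in immediate function fields, via henselian rationality. In the paper's account this is among the ``standard reductions''. It is purely valuation-theoretic, and it is already covered by the embedding lemma of Kuhlmann--Pal for finite imperfection degree. In that setting preservation of the $\lambda$-functions is automatic. If the common imperfection degree $e$ is finite and $B\subseteq K$ is a $p$-basis of $K$ of size $e$, then separability of $K_i/K$ makes $B$ a $p$-basis of both $K_1$ and $K_2$. Hence every field embedding $K_1\to K_2$ over $K$ leaves $K_2$ separable over the image. The added difficulty the paper identifies, which bites when the imperfection degree is infinite, is exactly to ensure $K_2$ is separable over $\varphi(K_1)$. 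That is what preserving the $\lambda$-functions means.

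You write that equal imperfection degree and separability of $K_1/K$ ``let us extend a $p$-basis of $K$ to a $p$-basis of $K_1$ and send it to a $p$-independent set in $K_2$''. This does not do that job. The new $p$-basis elements of $K_1$ are not yours to send anywhere. Their images are dictated by the valuation-theoretic choices in your steps two to four, and no cardinality consideration forces those images to be $p$-independent over $\varphi(K)$ in $K_2$.

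The missing idea is to build separability into each transcendental choice.
\begin{itemize}
\item When you realise a pseudo-limit $a$ of transcendental type over the current subfield $E$, Kaplansky's theory and saturation show that the admissible images form a nontrivial ball in $K_2$.
\item The bad images $b$ are those for which $K_2$ fails to be separable over $\varphi(E)(b)$. These are exactly the elements of $K_2^{p}\varphi(E)$. This is a proper $K_2^{p}$-linear subspace, and the infinite imperfection degree and saturation of $K_2$ are used to see it is proper.
\item A ball cannot lie in a proper $K_2^{p}$-subspace $V$. If $c+d\Ring{v_2}\subseteq V$ then $d\Ring{v_2}\subseteq V$, and $d\Ring{v_2}$ contains a $K_2^{p}$-multiple of every element of $K_2$, forcing $V=K_2$.
\item The condition $b\notin K_2^{p}\varphi(E)$ is a type over $\varphi(E)$. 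Each finite part excludes only a proper subspace, so together with the pseudo-limit conditions it is finitely satisfiable, and saturation realises it.
\end{itemize}
Similar care is needed for the value- and residue-transcendental elements in your second step, which you currently choose only by their values and residues.
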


\noindent
Again, taking $\square$ to be $\equiv$,
and simplifying to the setting
$\Lk=\Lring$
and $\LGamma=\Loag$,
we find the classic form of the Ax--Kochen/Ershov principle, now expressed for separably tame valued fields of equal characteristic.
\begin{center}{\em
	For all $(K,v),(L,w)\models\STVF^{\eq}$
	we have
	$(K,v)\equiv(L,w)$
	in $\Lvlambda$
	if and only if
	$Kv\equiv Lw$ in $\Lring$,
	$vK\equiv wL$ in $\Loag$,
	and
	$K$ and $L$ have the same elementary imperfection degree.
}\end{center}

\begin{proof}[Proof idea for case $e=\infty$]
The added difficultly in this proof over that of Theorem~\ref{thm:tame} is that when faced with an embedding problem,
we must ensure that $K_{2}$ is separable extension of the image of $\varphi$.
\begin{align*}
  	\xymatrix@=4.0em{
		(K_{1},v_{1})
        \ar@{-->}[r]^{\varphi}
&
		(K_{2},v_{2})
&
		k_{v_{1}}
		\ar@{->}[r]^{\varphi_{\bfk}}
&
		k_{v_{2}}
&
		\Gamma_{v_{1}}
		\ar@{->}[r]^{\varphi_{\bfG}}
&
		\Gamma_{v_{2}}
\\
		(K,v)
	\ar@{-}[u]_{}
  	\ar@{-}[ur]^{}
&
&
		k_{v}
	\ar@{-}[u]_{}
	\ar@{-}[ur]_{}
&
&
		\Gamma_{v}
	\ar@{-}[u]_{}
	\ar@{-}[ur]_{}
&
  	}
	\end{align*}
    After various standard reductions, one is left to prove that a single element $a\in K_{1}$---which may be assumed to be the pseudo-limit of a pseudo-Cauchy sequence of transcendental type over $K$---may be mapped into $K_{2}$ in such a way as to preserve the function symbols in the language $\Llambda$.
    For this one applies Kaplansky's theory to know that we have a nontrivial valuative ball of possible choices, and only a proper linear subspace to avoid.
\end{proof}

As above, we have the example corollary:

\begin{corollary}\label{cor:STVFe_decidability}
Let $(K,v)$ be a separably tame valued field of equal characteristic.
Then
\begin{itemize}
\item
the theory of $(K,v)$ in the language $\Lval$ of valued fields
is decidable
\end{itemize}
if and only if
\begin{itemize}
\item
the theory of $Kv$ in the language $\Lring$ of rings is decidable and
\item
the theory of $vK$ in the language $\Loag$ of ordered abelian groups is decidable.
\end{itemize}
\end{corollary}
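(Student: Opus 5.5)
The plan is to extract a computable ``sentence-by-sentence'' reduction from Theorem~\ref{thm:STVFe} (with $\square$ being $\equiv$, $\Lk=\Lring$, $\LGamma=\Loag$), in the same way that Corollary~\ref{cor:Hz_decidability} was deduced from the $\AKE^{\equiv}$ principle for $\Hz$. Let $(K,v)$ be separably tame of equal characteristic $p$ (if $p=1$ this is just the $\Hz$ case, so assume $p>1$), and let $\fraki\in\NN\cup\{\infty\}$ be the elementary imperfection degree of $K$. The $\Lval$-theory of $(K,v)$ is then axiomatized by $T:=\STVF^{\eq}$ together with the characteristic axiom, the imperfection degree axioms for $\fraki$ (one sentence if $\fraki$ is finite, an explicit recursive scheme if $\fraki=\infty$), the sentences of $\Th(Kv)$ relativized to the residue sort, and the sentences of $\Th(vK)$ relativized to the value group sort. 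Indeed, any model of this axiom set is elementarily equivalent to $(K,v)$ in $\Lvlambda$ by Theorem~\ref{thm:STVFe}, and a fortiori in the reduct $\Lval$.

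For the direction ($\Leftarrow$): $\STVF^{\eq}$ is recursively axiomatizable, since henselianity, separable defectlessness (via the fundamental equality, Lemma~\ref{lem:fundamental_inequality}, quantifying over coefficients of polynomials of each degree), $p$-divisibility and perfection of the residue field are each expressed by recursive schemes. The imperfection degree contributes either a single sentence or a recursive scheme, and in any case a fixed finite amount of information, which costs nothing for decidability. So if $\Th(Kv)$ and $\Th(vK)$ are decidable, the displayed axiom set is recursively enumerable, and it is complete (in $\Lval$) by the previous paragraph. A complete recursively enumerable theory is decidable: given $\varphi$, enumerate consequences until $\varphi$ or $\neg\varphi$ appears. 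For ($\Rightarrow$): $Kv$ and $vK$ are sorts of the $\Lval$-structure, so an $\Lring$-sentence about $Kv$ (resp.\ $\Loag$-sentence about $vK$) translates computably into an $\Lval$-sentence by relativizing to the sort $\bfk$ (resp.\ $\bfG$), and decidability transfers.

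The step I expect to need the most care is the completeness claim in $\Lval$ rather than $\Lvlambda$, and the handling of $\fraki$. Theorem~\ref{thm:STVFe} is stated in $\Lvlambda$, but $\Lvlambda$-equivalence implies $\Lval$-equivalence, so any two models of the axiom set agree on $\Lval$-sentences. I also need to check that the elementary imperfection degree is expressible in $\Lring$ (it is, by sentences asserting that $[K:K^{(p)}]\geq p^{n}$ via $p$-independence of $n$ elements), so that fixing it is legitimate. The only non-uniformity is the choice of $p$ and $\fraki$, which are finitely many parameters of data and so do not affect decidability.
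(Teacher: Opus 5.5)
Your proposal is correct and follows the argument the paper intends. The paper states this corollary with only ``As above'' as justification. It is meant to follow from the $\sAKE^{\equiv}$ principle of Theorem~\ref{thm:STVFe} (with $\Lk=\Lring$, $\LGamma=\Loag$), just as Corollary~\ref{cor:Hz_decidability} follows for $\Hz$. As you show, $\STVF^{\eq}$ together with the relativized theories of $Kv$ and $vK$ and the fixed imperfection degree gives a recursively enumerable complete $\Lval$-theory. The converse is relativization to the sorts $\bfk$ and $\bfG$, and your remarks on passing from $\Lvlambda$ to its reduct $\Lval$ and on hardcoding $p$ and $\fraki$ cover the only points needing care.
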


In the following example, we make use of the notion of ``many-one equivalence''. between two theories (each conflated with its image in $\NN$ under a suitable Gödel coding).
Such a many-one equivalence
(denoted $A\meq B$ for sets $A,B\subseteq\NN$)
means that $A$ is many-one reducible to $B$
(written $A\mred B$),
which in turn means that there is a computable function $f:\NN\rightarrow\NN$ such that $A=f^{-1}(B)$,
and that $B$ is many-one reducible to $A$.

\begin{example}
For $p$-divisible $\Gamma$, and perfect $k$, we have
\begin{itemize}
\item
$\Th(\Hs{\FF_{q}}{\Gamma},v_{t})\meq\Th(\Gamma)$,
\item
$\Th(\Hs{\overline{\FF}_{p}}{\Gamma},v_{t})\meq\Th(\Gamma)$,
\item
$\Th(\Hs{\overline{k}_{p}}{\QQ},v_{t})\meq\Th(k)$,
\item
$\Th(\Hs{\overline{k}_{p}}{p^{-\infty}\ZZ},v_{t})\meq\Th(k)$, and
\item
$\Th(\Hs{\FF_{q}}{\QQ},v_{t})$ is decidable.
\end{itemize}
\end{example}

\subsection{Singletons, pseudo-uniformizers, and the standard decomposition}

\begin{definition}[{Pointed valued fields}]\label{def:pointed_valued_fields}
	A \define{pointed valued field} is a triple $(K,v,\pi)$ where $(K,v)$ is a valued field and $\pi\in\Ideal{v}\setminus\{0\}$.
    Such an element $\pi$ may also be called a pseudo-uniformizer.
\end{definition}

\begin{remark}\label{rem:standard_decomposition}
Given a pointed valued field $(K,v,\pi)$,
the \define{standard decomposition} 
	is the following decomposition (in the sense of composition and decomposition of places):
let $\Delta_{\pi-}$ be the largest convex subgroup of $vK$ that does not contain $v(\pi)$,
and let $\Delta_{\pi+}$ be the smallest convex subgroup of $vK$ that does contain $v(\pi)$.
Then the quotient $vK\rightarrow vK/\Delta_{\pi-}$ corresponds to a coarsening $v_{\pi-}$ of $v$,
and the quotient $vK\rightarrow vK/\Delta_{\pi+}$ corresponds to a coarsening $v_{\pi+}$ of $v_{\pi-}$.
We also observe that $v_{\pi-}$ induces a rank $1$ valuation $\bar{v}_{\pi-}$ on the residue field $Kv_{\pi+}$.
\end{remark}
\begin{figure}[ht]
  \begin{align*}
  	\xymatrix@=4.0em{
		K^{*}
  	\ar@{->}[r]^{}
&
		K^{*}(v^{*})_{\pi+}
  	\ar@{->}[r]^{}
&
		K^{*}(v^{*})_{\pi-}
  	\ar@{->}[r]^{}
&
		K^{*}v^{*}
\\
&
		K
	\ar@{-}[u]_{}
	\ar@{-}[ul]_{}
  	\ar@{->}[r]^{}
&
		Kv
	\ar@{-}[u]_{}
	\ar@{-}[ur]_{}
&
  	}
  \end{align*}
  \caption{Standard decomposition at $\pi$}
\end{figure}
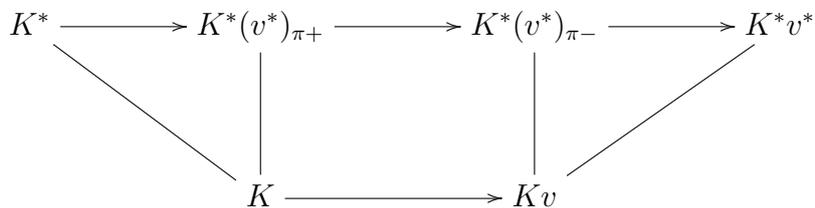

\begin{fact}
When $(K^{*},v^{*},\pi)$ is $\aleph_{1}$-saturated,
$(K^{*}(v^{*})_{\pi+},(\bar{v}^{*})_{\pi-})$
is maximal, complete, and valued in either $\ZZ$ or $\RR$.
\end{fact}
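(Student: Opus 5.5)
We need to prove: for an $\aleph_1$-saturated pointed valued field $(K^*,v^*,\pi)$, the residue field $k:=K^{*}(v^{*})_{\pi+}$ with the induced valuation $w:=(\bar v^{*})_{\pi-}$ is complete, maximal, and has value group either $\ZZ$ or (isomorphic to) $\RR$.

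The value group is dealt with first. It is $\Delta_{\pi+}/\Delta_{\pi-}$, which by construction has no proper nontrivial convex subgroups. Hence it is archimedean and embeds into $(\RR,+)$, with $v(\pi)$ sent to $1$. Such a subgroup is either discrete, hence $\cong\ZZ$, or dense.

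In the dense case I will show that every real number $r$ is realized. Choose rationals $q_n\uparrow r$ and $q'_n\downarrow r$. Each condition $q_n\cdot v(\pi)<v(x)<q'_n\cdot v(\pi)$ (i.e. $v(\pi^{m_n})<v(x^{l_n})$ and the like) is a formula with parameter $\pi$. Any finitely many of these conditions are jointly satisfiable, by density and by the surjectivity of $v$. So $\aleph_1$-saturation gives $x\in K^*$ with $v(x)$ corresponding to $r$. Such an $x$ lies in $\Delta_{\pi+}$ and projects to $r$. Thus the value group is all of $\RR$.

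For completeness, take a Cauchy sequence $(\bar a_n)$ in $(k,w)$. Passing to a subsequence, we may assume $w(\bar a_{n+1}-\bar a_n)>n$ in units of $w(\pi)$. The key point is the translation to $K^*$: an element $\bar b\in k$ with $w(\bar b)>n$ (measured relative to $\pi$) lifts to some $b\in\Ring{v_{\pi+}}$ with $v(b)\ge n\,v(\pi)$. Indeed, if $w(\bar b)>n w(\pi)$ then, after adjusting the lift by an element of $\Ideal{v_{\pi+}}$ (which has value exceeding all of $\Delta_{\pi+}$), the lift has $v$-value in $\Delta_{\pi+}$ strictly above $n v(\pi)$ modulo $\Delta_{\pi-}$, hence above $n v(\pi)$. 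Lift the $\bar a_n$ to $a_n\in\Ring{v_{\pi+}}$ with $v(a_{n+1}-a_n)>n v(\pi)$. Now consider the countable type in $x$ over $\{a_n\}\cup\{\pi\}$ given by $v(x-a_n)>n v(\pi)$ for all $n$. It is finitely satisfiable: for finitely many conditions take $x=a_N$ with $N$ large, using the ultrametric inequality. So $\aleph_1$-saturation gives a realization $x$. Then $x\in\Ring{v_{\pi+}}$, and its residue $\bar x$ is a limit of $(\bar a_n)$, because $v(x-a_n)>n v(\pi)$ implies $w(\bar x-\bar a_n)\ge n w(\pi)$.

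Maximality is the main obstacle. Since the value group has rank $1$, maximality is equivalent to spherical completeness: every pseudo-Cauchy sequence has a pseudo-limit. For a countable nested family of balls the same saturation argument applies: lift the centres and radii, which are given by $v(\pi)$-multiples of values of $K^*$-elements, and realize the countable type. In rank $1$ with value group $\ZZ$ or $\RR$, any nested chain of closed balls has a countable coinitial subchain, since the radii have countable cofinality in $\RR$. If the radii decrease to a limit, that limit is attained or approached countably; if the radii go to infinity, completeness already suffices. Hence every pseudo-Cauchy sequence reduces to a countable one, and the intersection is nonempty by saturation. The delicate point is to check carefully that the balls of $w$ pull back to $v$-definable conditions in $K^*$ using only countably many parameters. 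For this one should express radii via elements $c_n\in K^*$ with $v(c_n)$ lifting them, and use the condition $v(x-a_n)\ge v(c_n)$, noting that its residue image gives exactly the $w$-ball up to $\Delta_{\pi-}$-fuzz. The fuzz is harmless because distinct $w$-radii differ by more than $\Delta_{\pi-}$.
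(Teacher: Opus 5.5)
Your overall strategy is the standard one, and the value-group and completeness parts are fine. It consists of Hölder plus saturation for the value group, and of lifting countable Cauchy or pseudo-Cauchy data to $K^{*}$ and realizing a countable type. (In the completeness step no adjustment of lifts is needed: any lift $b$ of $\bar b\neq0$ already has $v(b)\in\Delta_{\pi+}$ with class $w(\bar b)$.)

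The step that does not go through as written is the consistency of the lifted ball-type in the maximality argument. Let $(\bar a_n)$ be pseudo-Cauchy with $\gamma_n=w(\bar a_{n+1}-\bar a_n)$. For $N>n$ the value $v(a_N-a_n)$ has class exactly $\gamma_n$ modulo $\Delta_{\pi-}$, the same class as $v(c_n)$. So nothing guarantees $v(a_N-a_n)\geq v(c_n)$, and an arbitrary choice of lifts can make the type inconsistent. For instance, suppose $\Delta_{\pi-}\neq0$ and you choose $c_n$ with $v(c_n)=v(a_{n+1}-a_n)+\delta$ for some $0<\delta\in\Delta_{\pi-}$. Then for $N>n$, any $y$ with $v(y-a_N)\geq v(c_N)$ satisfies $v(y-a_n)=v(a_N-a_n)=v(a_{n+1}-a_n)<v(c_n)$. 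Hence the lifted balls at $n$ and $N$ are disjoint. Your remark that distinct radii differ by more than $\Delta_{\pi-}$ is the right ingredient, but it must be used to choose the radii or centres; it does not by itself make the fuzz harmless.

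A concrete fix is to take $c_n:=a_{n+1}-a_n$. For $N>n+1$ the class of $v(a_N-a_{n+1})$ is $\gamma_{n+1}>\gamma_n$. A strict inequality between classes modulo the convex subgroup $\Delta_{\pi-}$ forces a strict inequality in $v^{*}K^{*}$, so $v(a_N-a_n)=v(a_{n+1}-a_n)$. Thus $a_N$ realizes any finite part of $\{v(x-a_n)\geq v(a_{n+1}-a_n):n\in\NN\}$. A realization $x$ lies in $\Ring{v_{\pi+}}$, and its residue is a pseudo-limit. (Equivalently, centre the $n$-th ball at $a_{n+1}$ and use any lift of $\gamma_n$ as radius.)

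With this repair your reduction to sequences of length $\omega$ completes the maximality proof; that reduction holds because strictly increasing well-ordered sequences in $\RR$ are countable. The repaired argument also subsumes the completeness paragraph.
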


\begin{example}
$\Th(\Hs{\FF_{q}}{\QQ},v_{t},t)$ is decidable: this is a result from \cite{Lisinski} that uses Kedlaya's computable enumeration of the relative algebraic closure of $\FF_{p}(t)$ in $\Hs{\FF_{p}}{\QQ}$.
\end{example}

\section{Taming perfectoid fields: the class of ``almost tame'' valued fields}

We fix a prime number $p\in\PP$.
A ring $R$ of characteristic $p$ is \define{semiperfect} if
the Frobenius map $R\rightarrow R$, given by $x\mapsto x^{p}$, is surjective.
            
\begin{definition}\label{def:perfectoid}
	A valued field $(K,v)$ of residue characteristic $p>0$ is \define{perfectoid} if
	\begin{enumerate}
		\item
			it is of rank $1$ and complete,
		\item
			the value group $vK$ is $p$-divisible, and
		\item
			the ring $\mathcal{O}_{v}/p$ is semiperfect.
	\end{enumerate}
\end{definition}
We denote by
$\Cperf$
the class of perfectoid fields.

\begin{definition}\label{def:tilt}
	The \define{tilt} $(K,v)^{\flat}$ of a perfectoid field $(K,v)$ is
    the field of fractions of the valuation ring
	$$\Ring{\tilt{v}}:=\varprojlim_{x\mapsto x^{p}}\mathcal{O}_{v}/p\mathcal{O}_{v}.$$
\end{definition}

The primary source material is certainly
\cite{Scholze},
but there are several other important surveys and references, 
including
\cite{Fontaine_Bourbaki,Morrow_Bourbaki,Morrow_raconte}.
% FarguesFontaine

Tilting is certainly not injective,
as a map from mixed characteristic to positive characteristic perfectoid fields.
Untilts of a given perfectoid field
of positive characteristic are parametrized
by distinguished elements $\xi\in W[\Ring{\tilt{v}}]$,
up to a unit.
See the exposition in \cite{Kartas_tilting}.

\begin{definition}[{``Almost tame fields'', \cite[Definition 4.2.1]{JahnkeKartas}}]\label{def:almosttame}
Let $\CAT_{p}$ be the class of pointed valued fields $(K,v,\pi)$
of residue characteristic $p$
such that:
	\begin{enumerate}[{\bf(i)}]
		\item
			The valued field $(K,v)$ is henselian.
		\item
			The ring $\Ring{v}/p$ is semiperfect.
		\item
			The valuation ring $\Ring{v}[\pi^{-1}]$ is algebraically maximal.
\end{enumerate}
\end{definition}

Elements of $\CAT_{p}$ will naturally be construed as $\Lvalpi$-structures.

\begin{proposition}[{\cite[Proposition 4.2.2]{JahnkeKartas}}]\label{prp:elementary}
The class $\CAT_{p}$ is $\Lval$-elementary.
\end{proposition}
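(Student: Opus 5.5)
The plan is to write down, for each of the three conditions (i)--(iii) of Definition~\ref{def:almosttame}, a set of first-order sentences in $\Lvalpi$ (the expansion of $\Lval$ by a constant for $\pi$) expressing it, together with the axioms of $\VF$, the residue characteristic axiom $\bfval(p)>0$, and the axiom $0<\bfval(\pi)<\infty$. The class $\CAT_{p}$ is then the class of models of the union of these axiom sets.

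Conditions (i) and (ii) are routine. Henselianity is axiomatized by the theory $\H$: for each $n$ we add the Hensel property of Definition~\ref{def:henselian} for monic polynomials of degree $\le n$, quantifying over the coefficients. Semiperfectness of $\Ring{v}/p$ is the single sentence
\begin{align*}
\forall x\,\bigl(\bfval(x)\ge0\rightarrow\exists y\,(\bfval(y)\ge0\wedge\bfval(x-y^{p})\ge\bfval(p))\bigr).
\end{align*}

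The main point is condition (iii). The ring $\Ring{v}[\pi^{-1}]$ is the valuation ring of the coarsening $w:=v_{\pi+}$ from Remark~\ref{rem:standard_decomposition}, and it is uniformly definable from $\pi$:
\begin{align*}
x\in\Ring{w}\Longleftrightarrow\exists n\in\NN\;\bfval(x)\ge -n\bfval(\pi).
\end{align*}
This is not literally first-order. However, $\Ring{w}$ is the union of the sets $\pi^{-n}\Ring{v}$, so membership is a countable disjunction. In each model we can nevertheless express statements about $w$ by quantifying over $n$ externally, in the form of a scheme.

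We then need a first-order criterion for algebraic maximality of $(K,w)$. Since $(K,v)$ is henselian, so is its coarsening $w$. For a henselian field, algebraic maximality is equivalent to the following property: every finite extension $L/K$ is defectless. An equivalent and more elementary formulation is that for every polynomial $f\in K[X]$ of degree $d$, the set $\{w(f(a)) : a\in K\}$ attains a maximum. We plan to use the Kaplansky-style characterization: $(K,w)$ is algebraically maximal iff for each $d$ and each polynomial $f$ of degree $\le d$, there is $a\in K$ with
\begin{align*}
w(f(a))=\max\{w(f(b)) : b\in K\},
\end{align*}
or, when $f$ has a root in $K$, with $f(a)=0$. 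This uses that a pseudo-Cauchy sequence of algebraic type without limit produces exactly such an $f$ with no maximal value.

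Now $w(x)\ge w(y)$ iff $\bfval(x)+n\bfval(\pi)\ge\bfval(y)$ for some $n$. To make this first-order, we will instead use the convex subgroup
\begin{align*}
\Delta=\{\gamma : \forall n\;|\gamma|<n\bfval(\pi)\}
\end{align*}
and write the axiom, for each $d$, in the form
\begin{align*}
\forall \bar c\,\exists a\,\forall b\;\bfval(f_{\bar c}(b))\le\bfval(f_{\bar c}(a))+N\bfval(\pi).
\end{align*}
Here $N$ should be chosen uniformly in $d$. This is the hard step, because "attaining the maximum modulo $\Delta$" a priori involves an unbounded $n$ depending on $b$.

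To resolve it, we would invoke the Jahnke--Kartas style trick: by semiperfectness of $\Ring{v}/p$ we may take $p$-th roots up to error $p$. We can also replace $\pi$ by $\pi^{1/p^k}$ approximately, so that $w$-values in $[0,\bfval(\pi))$ are controlled. Combined with the fact that $\bfval(\pi)$ and $\bfval(p)$ generate the same convex subgroup, this should let us take $N=1$ or $N=d$. If this fails, the fallback is an ultraproduct argument. We would show that $\CAT_{p}$ is closed under ultraproducts and under elementary equivalence; the latter means that its complement is closed under ultraproducts. For ultraproducts, we note that the standard decomposition of an ultraproduct, with respect to $\pi$, has as $w$-part the coarsening of the ultraproduct of the $w_i$. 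Algebraic maximality then transfers via the "max attained" formulation with a bound read off degreewise in each factor. Closure of the complement follows from the same argument applied to a witness polynomial with no maximal value.
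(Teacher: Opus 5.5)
Your treatment of (i) and (ii) is fine, and so is the reduction of (iii) to a ``maximum attained'' statement. Since $w=v_{\pi+}$ is henselian, $(K,w)$ is algebraically maximal iff $w(f(K))$ has a maximum for every $f\in K[X]$. Hence your scheme $\forall\bar c\,\exists a\,\forall b\;v(f_{\bar c}(b))\le v(f_{\bar c}(a))+N v(\pi)$ does imply (iii).

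The gap is the other inclusion: that every member of $\CAT_{p}$ satisfies the scheme for a single $N$, independent of the field, of $\bar c$ and of $b$. That is the whole content of the proposition, and you leave it open. The mechanism you suggest does not address it:
\begin{itemize}
\item $v(\pi)$ and $v(p)$ need not generate the same convex subgroup. The element $\pi$ is arbitrary, and $v(p)=\infty$ in equal characteristic. When $v(p)\in\Delta_{\pi+}$, the valuation $w$ has residue characteristic $0$ and (iii) is automatic, so the only interesting case is $v(\pi)\ll v(p)$.
\item Approximate $p$-th roots of $\pi$ say nothing about the real danger, which is that $v(f(b))$ climbs by unboundedly many multiples of $v(\pi)$ inside the top $w$-class.
\end{itemize}
The ultraproduct fallback does not help either:
\begin{itemize}
\item It is circular. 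To check (iii) in $\prod K_i/\ult$ via the max-attained formulation, given $a=(a_i)$ you need $v(f_i(b_i))\le v(f_i(a_i))+n\,v(\pi_i)$ with $n$ independent of $i$. That is the same uniform bound.
\item The complement of $\CAT_{p}$ is not closed under ultraproducts; if it were, $\CAT_{p}$ would be finitely axiomatizable. Factors can fail henselianity or (iii) only in degrees tending to infinity, which is exactly the unbounded-degree problem in your witness argument. The Keisler--Shelah criterion only needs the complement closed under ultrapowers, where a single witness polynomial works.
\end{itemize}
Two smaller slips, both unused. For henselian fields, algebraic maximality is not equivalent to defectlessness in general; only the implication from defectless to algebraically maximal holds. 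Also, your $\Delta$, defined with $\forall n$, is not the intended convex subgroup.

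The uniform bound does hold, with $N=1$, but the reason lies in the rank-one piece of the standard decomposition.
\begin{itemize}
\item Let $u=v_{\pi-}$ and let $\bar u$ be the rank-one valuation it induces on $Kw$. Then $(Kw,\bar u)$ is henselian because $v$ is, and perfect because $K$ is; this is where (ii) enters. Hence $Kw$ is algebraically closed in its completion.
\item Suppose $w(f(K))$ has maximum $\gamma<\infty$, and there are $b_n$ with $w(f(b_n))=\gamma$ and $v(f(b_n))-v(f(b_0))>n\,v(\pi)$.
\item Factor $f$ over $\tilde K$ and pass to subsequences. Algebraic maximality of $w$ forces each $\tilde w(b_n-\alpha_j)$ to be constant; otherwise the $b_n$ form a $w$-pseudo-Cauchy sequence of algebraic type whose limit in $K$ beats $\gamma$.
\item One then finds a root $\alpha$, with no root $\tilde w$-closer to the $b_n$, such that $\tilde w(b_n-\alpha)=\beta=w(b_n-b_m)$ for $n\neq m$, while $\tilde u(b_n-\alpha)$ is archimedean-unbounded within this $\tilde w$-class.
\item With $c=b_1-b_2$, the residues of $(b_n-b_1)/c$ are $\bar u$-Cauchy in $Kw$ and converge to the residue of $(\alpha-b_1)/c$. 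That residue is algebraic over $Kw$, so it lies in $Kw$.
\item A lift gives $e\in K$ with $\tilde w(e-\alpha)>\beta$, hence $w(f(e))>\gamma$, a contradiction.
\end{itemize}
So the $v$-values on the top $w$-class are bounded in archimedean units of $v(\pi)$. Choosing $a$ within one unit of the supremum gives your scheme with $N=1$.

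With this lemma your route becomes a genuine alternative to the paper's. The paper instead replaces algebraic maximality of $v_{\pi+}$ by a condition on generators $a$ of finite extensions with $0\le v(\delta(a))\le v(\pi)$. There the uniformity in $v(\pi)$ is built into the characterization, rather than extracted from values of polynomials.
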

\begin{proof}[Sketch of the proof]
Axioms {\bf(i,ii)} are obviously elementary in the language $\Lval$.
It remains to argue that {\bf(iii)} is likewise elementary, modulo the background theory of henselian pointed valued fields of residue characteristic $p$, with $\Ring{v}/p$ semiperfect.
When $v$ is henselian and $\Ring{v}/p$ is semiperfect, the following are equivalent:
\begin{itemize}
\item
$v_{\pi+}$ is algebraically maximal
\item
every unramified finite extension of $(K,v)$ is generated by a singleton $a$
satisfiying the inequality~$0\leq v(\delta(a))\leq v(\pi)$,
where $\delta(a)$ is the different of $a$ over $K$.
\end{itemize}
The latter is equivalent to the conjunction of an infinite family of $\Lval$-sentences, thus is axiomatizable.
The equivalence itself may be proved directly.
\end{proof}

\begin{remark}[{\cite[Remark 4.2.4]{JahnkeKartas}}]
\
\begin{enumerate}[{\bf(a)}]
\item
	If $(K,v)$ is perfectoid and $\pi\in\Ideal{v}\setminus\{0\}$ then $(K,v,\pi)\in\CAT_{p}$.
	So ``perfectoid implies almost tame''.
\item
	Let $(K,v)$ be mixed characteristic $(0,p)$ and not unramified.
	Then $(K,v,p)\in\CAT_{p}$ if and only if $(K,v)$ henselian and roughly deeply ramified.
	Deeply ramified valued fields have no discrete archimedean components.
\item
	The semiperfectness of $\Ring{v}/p$ implies that $K$ is perfect.
	Deeply ramified henselian fields need not be perfect, but their completions necessarily are.
\item
	Not every henselian deeply ramified field of positive characteristic admits~$\pi\in\Ideal{v}\setminus\{0\}$ such that $\Ring{v}[\pi^{-1}]$ is algebraically maximal.
\end{enumerate}
\end{remark}

\begin{lemma}[{\cite[Lemma 4.2.5]{JahnkeKartas}}]
	Let $(K,v,\pi)\in\CAT_{p}$.
	Let $w$ be any coarsening of $v$ with $w(\pi)=0$.
	Then $(Kw,\bar{v},\res_{w}(\pi))\in\CAT_{p}$,
    where $\bar{v}$ is the valuation induced on $Kw$ by $v$.
\end{lemma}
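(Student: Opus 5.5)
We check the three axioms of Definition~\ref{def:almosttame} for $(Kw,\bar{v},\bar\pi)$, where $\bar\pi:=\res_{w}(\pi)$. First, $\bar\pi$ is a legitimate pseudo-uniformizer. Since $w(\pi)=0$, $\pi$ is a $w$-unit, so $\bar\pi\neq0$. Since $v(\pi)>0$, we also have $\bar{v}(\bar\pi)=v(\pi)>0$.

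Throughout we use the standard dictionary for composite valuations. The valuation ring $\Ring{v}$ is the preimage of $\Ring{\bar{v}}$ under $\res_{w}:\Ring{w}\rightarrow Kw$. Moreover $\Ideal{w}\subseteq\Ideal{v}$, and $\res_{w}$ induces a surjection $\Ring{v}\rightarrow\Ring{\bar v}$ with kernel $\Ideal{w}$.

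\textbf{Axiom (i).} A composite valuation $v=\bar v\circ w$ is henselian if and only if both $w$ and $\bar v$ are henselian. We only need the direction that $v$ henselian implies $\bar v$ henselian. Take a monic $\bar f\in\Ring{\bar v}[X]$ with a simple residue root. Lift it to a monic $f\in\Ring{v}[X]$; the simple root then persists modulo $\Ideal{v}$. Henselianity of $v$ gives a root of $f$ in $\Ring{v}$, and its image under $\res_w$ is the required root of $\bar f$.

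\textbf{Axiom (ii).} We first need $p\in\Ideal{w}$ or $Kw$ of characteristic $p$. In fact $w(p)\geq 0$ always holds. If $w(p)>0$ then $\bar p=0$ in $Kw$. If $w(p)=0$ then $\Ring{\bar v}/\bar p\cong\Ring{v}/(p\Ring{v}+\Ideal{w})$, because $\Ideal{w}\subseteq p\Ring{v}$ when $p$ is a $w$-unit. In the first case we get $\Ring{\bar v}/p\cong\Ring{v}/(p\Ring{v}+\Ideal{w})$ as well. In either case $\Ring{\bar v}/p$ is a quotient of the semiperfect ring $\Ring{v}/p$, and a quotient of a semiperfect ring is semiperfect.

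\textbf{Axiom (iii).} This is the main point. Let $u:=v_{\pi+}$ be the valuation whose ring is $\Ring{v}[\pi^{-1}]$; by hypothesis $u$ is algebraically maximal. Since $w(\pi)=0$, every convex subgroup associated with $w$ avoids $v(\pi)$. Hence $w$ is a coarsening of $v_{\pi-}$, and in particular a coarsening of $u$. Writing $u=\bar u\circ w$, the ring of $\bar u$ is $\Ring{\bar v}[\bar\pi^{-1}]$, i.e.\ $\bar u=\bar v_{\bar\pi+}$. It therefore suffices to show the following.
\begin{quote}
If $u=\bar u\circ w$ is algebraically maximal, then so is $\bar u$ on $Kw$.
\end{quote}

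For this, note that $u$ is henselian, so $w$ is henselian. Take a finite extension $(\ell,\bar u')/(Kw,\bar u)$; we may assume it is separable, since $Kw$ is perfect in the relevant case by (ii) and Remark~(c). Lift it to an unramified extension $L/K$ with respect to $w$, with $Lw=\ell$. Then $u$ extends uniquely to $L$ as the composite with $\bar u'$. By multiplicativity of $e$, $f$ and the defect in the fundamental equality (Lemma~\ref{lem:fundamental_inequality}) for composites, the defect of $L/K$ with respect to $u$ equals the defect of $\ell/Kw$ with respect to $\bar u$, since $L/K$ is defectless and unramified for $w$.

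It remains to pass from ``no defect'' to ``no immediate extension''. An immediate algebraic extension of $\bar u$ may be taken finite, and it lifts to an immediate finite extension of $u$, contradicting algebraic maximality of $u$. Concretely, $e$ and $f$ are multiplicative along the composite and trivial for $w$, so the lifted extension is immediate for $u$.

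The main obstacle is this lifting step. If $p\in\Ideal{w}$, we must check that Hensel lifting via the henselian valuation $w$ produces an unramified extension of the right degree, and that immediacy transfers through the composite. Alternatively, one could use the criterion in the proof of Proposition~\ref{prp:elementary}, about generators with bounded different, and transport it along $\res_w$.
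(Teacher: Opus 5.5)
Your argument is correct in substance. The paper states this lemma without proof and only cites Jahnke--Kartas, so the comparison is with the natural route, which is the one you take. You verify (i) and (ii) through the identification $\mathcal{O}_{\bar v}=\mathcal{O}_v/\mathfrak{m}_w$. You note that $\res_w$ carries $\mathcal{O}_v[\pi^{-1}]$ onto $\mathcal{O}_{\bar v}[\bar\pi^{-1}]$. You then descend algebraic maximality from $u=\bar u\circ w$ to $\bar u$ by lifting immediate extensions along the henselian coarsening $w$.

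Two justifications need repair.

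First, your sentence about convex subgroups is backwards. The condition $w(\pi)=0$ means that $v(\pi)$ lies \emph{in} the convex subgroup $\Delta_w=\ker(vK\to wK)$. Hence $\Delta_w\supseteq\Delta_{\pi+}$, and $w$ is a coarsening of $u=v_{\pi+}$ directly. Your inference ``coarsening of $v_{\pi-}$, hence of $u$'' runs the wrong way, since $u$ is itself coarser than $v_{\pi-}$.

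Second, the defect computation does no work. Algebraic maximality of $u$ does not make $u$ defectless: an algebraically maximal field can have defect extensions, only no immediate ones. So there is no ``no defect'' statement to pass from.

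What actually proves (iii) is the lifting argument you give afterwards, and it is complete:
\begin{enumerate}
\item $Kw$ is perfect by (ii). So a proper immediate algebraic extension of $(Kw,\bar u)$ contains a proper finite separable immediate subextension $\ell$.
\item Lift the minimal polynomial of a primitive element of $\ell/Kw$ through the henselian $w$. This gives $L/K$ with $[L:K]=[\ell:Kw]$, and the fundamental inequality gives $Lw=\ell$ and $wL=wK$. This holds whether or not $p\in\mathfrak{m}_w$.
\item Compare $0\to\bar u(Kw)\to uK\to wK\to0$ with its analogue for $L$. Together with $Lu=\ell\bar u'=(Kw)\bar u=Ku$, this shows that $L/K$ is a proper immediate extension for $u$, a contradiction.
\end{enumerate}
So the ``main obstacle'' you flag is standard and is not an obstacle.
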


\section{Ax--Kochen/Ershov principles for almost tame valued fields}

First we need a definition, to prepare for a special case of the main theorems.

\begin{definition}[{\cite{RobinsonZakon}}]
An ordered abelian group $\Gamma$ is \define{regularly dense}
if for all
$\gamma_{1},\gamma_{2}\in\Gamma$ with
$\gamma_{1}<\gamma_{2}$,
and all $n\in\NN$,
there is $\gamma\in\Gamma$
with $\gamma_{1}<n\gamma<\gamma_{2}$.
\end{definition}

\cite{RobinsonZakon} proved that an ordered abelian group $\Gamma$ is regularly dense if and only if it is elementarily equivalent to a dense Archimedean ordered abelian group.

As preparation, \cite{JahnkeKartas} take the step of developing a range of Ax--Kochen/Ershov results for the class of ``roughly tame'' valued fields, which is the closure of the class of tame valued fields under composition.
Rather than dwell on that, we will get straight to the first AKE principle for almost tame valued fields:

\begin{theorem}[{AKE relative subcompleteness, \cite[Theorem 5.1.2]{JahnkeKartas}}]\label{thm:AKE-relative_subcompleteness}
	Let $K_{0}\subseteq K_{1},K_{2}$ be pointed henselian valued fields of mixed characteristic $(0,p)$
	such that
	\begin{enumerate}[{\bf(a)}]
\item
	the rings $\mathcal{O}_{0}/p$,
	$\mathcal{O}_{1}/p$,
	and
	$\mathcal{O}_{2}/p$
	are semiperfect,
\item
	the valuation rings $\mathcal{O}_{0}[\pi^{-1}]$,
	$\mathcal{O}_{1}[\pi^{-1}]$,
	and
	$\mathcal{O}_{2}/[\pi^{-1}]$
	are algebraically maximal,
	and
\item
	one has $\Gamma_{0}\preceq_{\exists}\Gamma_{1}$ in $\Loag$.
	\end{enumerate}
	Then 
\begin{align*}
	\underbrace{K_{1}\equiv_{K_{0}}K_{2}}_{\text{in }\Lval}
	&\Longleftrightarrow
	\underbrace{\mathcal{O}_{1}/\pi\equiv_{\mathcal{O}_{0}/\pi}\mathcal{O}_{2}/\pi}_{\text{in }\Lring}
	\text{ and }
	\underbrace{\Gamma_{1}\equiv_{\Gamma_{0}}\Gamma_{2}}_{\text{in }\Loag}.
\end{align*}
Moreover, if $\Gamma_{0}$ is regularly dense,
	then the condition {\bf(c)} can be omitted.
	If both $\Gamma_{1}$ and $\Gamma_{2}$ are regularly dense, then 
\begin{align*}
	\underbrace{K_{1}\equiv_{K_{0}}K_{2}}_{\text{in }\Lval}
	&\Longleftrightarrow
	\underbrace{\mathcal{O}_{1}/\pi\equiv_{\mathcal{O}_{0}/\pi}\mathcal{O}_{2}/\pi}_{\text{in }\Lring}.
\end{align*}
\end{theorem}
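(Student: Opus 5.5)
The forward implication is immediate: $\mathcal{O}_{i}/\pi$ and $\Gamma_{i}$ are interpretable in $(K_{i},v_{i},\pi)$ uniformly, with parameters from $K_{0}$. For the converse I would combine the standard decomposition of Remark~\ref{rem:standard_decomposition} with an AKE principle for tame-type valued fields in equal characteristic, in line with the conceptual equation from the introduction. First replace $K_{1},K_{2}$ by $\aleph_{1}$-saturated (indeed sufficiently saturated) elementary extensions over $K_{0}$. The hypotheses are preserved, since $\CAT_{p}$ is elementary by Proposition~\ref{prp:elementary}. Decompose each $v_{i}$ as $v_{i,\pi+}$, then the rank~$1$ valuation $\bar{v}_{i,\pi-}$ on $K_{i}v_{i,\pi+}$, then the residual valuation on $K_{i}v_{i,\pi-}$. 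The coarsening $v_{i,\pi+}$ has residue characteristic zero. It is henselian because $v_{i}$ is, and it is algebraically maximal, which is automatic in residue characteristic~$0$. Its residue field $k_{i}:=K_{i}v_{i,\pi+}$, equipped with the induced valuation and the residue of $\pi$, is again in $\CAT_{p}$ by \cite[Lemma 4.2.5]{JahnkeKartas}. By the Fact, $k_{i}$ is moreover complete of rank $1$ with respect to $\bar{v}_{i,\pi-}$, and it has perfectoid-type behaviour since $\mathcal{O}/p$ is semiperfect.

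The key step is to compare $k_{1}$ and $k_{2}$ via tilting. I would use the nonstandard version of the tilt: for the rank-one complete part, $\mathcal{O}_{k_{i}}/\pi$ determines the tilt $k_{i}^{\flat}$ together with its valuation. More precisely, $\Ring{\tilt{v}}=\varprojlim\mathcal{O}/\pi$ is built from $\mathcal{O}/\pi$ alone. Since $\mathcal{O}_{i}/\pi=\mathcal{O}_{k_{i}}/\pi$ (as $\pi$-adically the coarsening is invisible), elementary equivalence of $\mathcal{O}_{1}/\pi$ and $\mathcal{O}_{2}/\pi$ over $\mathcal{O}_{0}/\pi$ should pass, via a suitable ultrapower or saturation argument, to the tilts, viewed as equal-characteristic perfect valued fields. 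The tilts are then perfect, henselian, and algebraically maximal (hence tame) in characteristic $p$, with $p$-divisible value groups. So Theorem~\ref{thm:tame} applies to them, relative to the common subfield coming from $K_{0}^{\flat}$. I would then aim to untilt the resulting embedding or elementary equivalence. Scholze's equivalence (Fontaine--Wintenberger) lets finite extensions, and hence the algebraically maximal, henselian structure, be transported back. The untilt is determined by a distinguished element $\xi$ coming from $K_{0}$, which is preserved.

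Finally, assemble the pieces with Theorem~\ref{thm:Hz} for the residue-characteristic-zero coarsening. I would run a back-and-forth at the level of $v_{\pi+}$ using the embedding lemma (Lemma~\ref{lem:embedding_Hz}) in its relative form. Residue-field data is supplied by the previous step, and value group data by $\Gamma_{1}\equiv_{\Gamma_{0}}\Gamma_{2}$, passed to $vK/\Delta_{\pi+}$. Hypothesis {\bf(c)} gives the torsion-free/existential-closure condition needed for the relative embedding over $K_{0}$, together with the usual pure-subgroup conditions. If $\Gamma_{0}$ is regularly dense, I would instead derive the needed relative existential closedness from Robinson--Zakon. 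If both $\Gamma_{i}$ are regularly dense, the value group condition follows automatically. The reason is that regularly dense groups with the same divisibility data are determined over a common subgroup by the $\pi$-interval, which is encoded in $\mathcal{O}/\pi$ (values in $[0,v(\pi))$ are recorded by the ideals of $\mathcal{O}/\pi$). The main obstacle I expect is the tilting step: making the transfer from $\mathcal{O}/\pi$-equivalence to equivalence of the rank-one parts rigorous in a nonstandard (saturated, not literally complete) setting over the base $K_{0}$. This requires controlling that untilting respects the embedding over $K_{0}$.
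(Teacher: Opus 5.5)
There is a genuine gap, and it is the step you flag yourself: tilting the rank-one layer of $K_iv_{i,\pi+}$, applying Theorem~\ref{thm:tame} to the tilts, and then untilting.

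\textbf{Why the tilting step fails as written.}
\begin{itemize}
\item The tilt is an inverse limit, not something interpretable in $\mathcal{O}/\pi$. So elementary equivalence of $\mathcal{O}_1/\pi$ and $\mathcal{O}_2/\pi$ over $\mathcal{O}_0/\pi$ says nothing about the tilts without an argument as strong as the theorem being proved.
\item There is no way to ``untilt'' an elementary equivalence. Untilting acts on isomorphisms of perfectoid fields that respect a common distinguished element $\xi$. That requires the rank-one layer of $K_0$ to be complete, yet you saturate only $K_1$ and $K_2$.
\item Your claim that the tilts are algebraically maximal is false in general. The perfectoid field $\widehat{\ps{\FF_{p}}^{\perf}}$ has an immediate Artin--Schreier extension given by $X^{p}-X-t^{-1}$. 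Algebraic maximality can only be recovered through spherical completeness after saturation.
\item When $v(p)$ is minimal positive, the rank-one layer is discretely valued and hence not perfectoid. The paper treats this case separately, via the unramified AKE theory.
\item Your assertion that $v_{i,\pi+}$ has residue characteristic zero presupposes that $v(p)$ lies in the convex subgroup generated by $v(\pi)$.
\end{itemize}
If you worked with genuine isomorphisms throughout, obtained by Keisler--Shelah for all three fields, you could perhaps untilt an isomorphism of tilts over the tilt of $K_0$'s layer and finish with Lemma~\ref{lem:embedding_Hz}. In that version Theorem~\ref{thm:tame} plays no role, and the argument still has to be written.

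\textbf{The missing idea: coarsen one step further.} Your detour disappears if you take $w_i$ to be the coarsest coarsening of $v_i$ with $w_i(\pi)>0$, that is $v_{i,\pi-}$ rather than $v_{i,\pi+}$. This is what the paper does.
\begin{itemize}
\item In $\mathcal{O}_{v_i}$ one has $\sqrt{(\pi)}=\Ideal{w_i}$. Hence $(\mathcal{O}_{v_i}/\pi)_{\red}=\mathcal{O}_{v_i}/\Ideal{w_i}=\mathcal{O}_{\bar v_i}$ is exactly the valuation ring of the valuation $\bar v_i$ induced on the characteristic-$p$ residue field $k_{w_i}$.
\item After replacing $K_0,K_1,K_2$ by ultrapowers over a common ultrafilter (the paper saturates $K_0$ too), the isomorphism $\mathcal{O}_1/\pi\cong\mathcal{O}_2/\pi$ over $\mathcal{O}_0/\pi$ directly gives a valued-field isomorphism $(k_{w_1},\bar v_1)\cong(k_{w_2},\bar v_2)$ over $k_{w_0}$.
\item Likewise $\Gamma_1\cong\Gamma_2$ over $\Gamma_0$ gives $\Gamma_{w_1}\cong\Gamma_{w_2}$ over $\Gamma_{w_0}$.
\end{itemize}
The perfectoid rank-one layer now sits inside $w_i$ rather than in its residue field. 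It is absorbed by the embedding lemma for roughly tame valued fields, with the following inputs:
\begin{itemize}
\item each $(K_i,w_i)$ is roughly tame;
\item $k_{w_1}/k_{w_0}$ is separable because $k_{w_0}$ is perfect;
\item $\Gamma_{w_1}/\Gamma_{w_0}$ is torsion-free, which is where {\bf(c)} is used.
\end{itemize}
No tilting is needed at any point.
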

\begin{proof}[Sketch of the proof]
	If $v(p)$ is minimal positive in $\Gamma_{v}$ then
	the result follows from known Ax--Kochen/Ershov principles for mixed characteristic unramified henselian valued fields.
Otherwise, we may suppose instead that $v(p)$ is not minimal positive.
In this case the $K_{i}$ are deeply ramified.

	$(\Rightarrow)$
Both $\mathcal{O}_{v}/\pi$ and $\Gamma_{v}$ are interpretable in $(K,v,\pi)$.

	$(\Leftarrow)$
By the Keisler--Shelah Theorem,
	we may replace $K_{0}$, $K_{1}$, and $K_{2}$
	by nonprincipal ultrapowers $K_{0}^{\ult}$, $K_{1}^{\ult}$, $K_{2}^{\ult}$
	with respect to the same ultrafilter $\ult$,
	if necessary,
	and thus assume that
	$K_{0}$ is $\aleph_{1}$-saturated
	and that there are isomorphisms
	$\varphi:\mathcal{O}_{1}/\pi\rightarrow\mathcal{O}_{2}/\pi$ over $\mathcal{O}_{0}/\pi$
	and
	$\psi:\Gamma_{1}\rightarrow\Gamma_{2}$ over $\Gamma_{0}$.
	For $i=0,1,2$, let $w_{i}$ be the coarsest coarsening of $v_{i}$ such that $w_{i}(\pi)>0$.
Then
$\mathcal{O}_{\bar{v}_{i}}=(\mathcal{O}_{v_{i}}/\pi)_{\red}$.
The isomorphism $\varphi$ induces an isomorphism
$\bar{\varphi}:\mathcal{O}_{\bar{v}_{1}}\rightarrow\mathcal{O}_{\bar{v}_{2}}$ over $\mathcal{O}_{\bar{v}_{0}}$.
This then lifts to an $\Lval$-isomorphism
$$\varphi':(k_{w_{1}},\bar{v}_{1})\rightarrow(k_{w_{2}},\bar{v}_{2})$$
over $k_{w_{0}}$.
Likewise the isomorphism $\psi$ induces an $\Loag$-isomorphism
	$$\psi':\Gamma_{w_{1}}\rightarrow\Gamma_{w_{2}}$$
	over $\Gamma_{w_{0}}$.
	We observe that $k_{w_{1}}/k_{w_{0}}$ is separable and $\Gamma_{w_{1}}/\Gamma_{w_{0}}$ is torsion-free.
	The following diagram represents the situation.
	\begin{align*}
  	\xymatrix@=4.0em{
		(K_{1},w_{1})
  	\ar@{-->}[r]^{}
&
		(K_{2},w_{2})
&
		k_{w_{1}}
		\ar@{->}[r]^{\varphi'}
&
		k_{w_{2}}
&
		\Gamma_{w_{1}}
		\ar@{->}[r]^{\psi'}
&
		\Gamma_{w_{2}}
\\
		(K_{0},w_{0})
	\ar@{-}[u]_{}
  	\ar@{-}[ur]^{}
&
&
		k_{w_{0}}
	\ar@{-}[u]_{}
	\ar@{-}[ur]_{}
&
&
		\Gamma_{w_{0}}
	\ar@{-}[u]_{}
	\ar@{-}[ur]_{}
&
  	}
	\end{align*}
    This is now an embedding problem of roughly tame valued fields.
\end{proof}

This yields---fairly quickly---three AKE principles, for $\square\in\{\preceq,\equiv,\preceq_{\exists}\}$.

\begin{theorem}[{$\AKE^{\preceq}$, \cite[Theorem 5.1.4]{JahnkeKartas}}]\label{thm:AKE-elementary_substructure}
Let $K_{1}\subseteq K_{2}$ be two pointed henselian valued fields of residue characteristic $p$.
Suppose that
	{\bf(a)}
both
$\mathcal{O}_{1}/p$ and $\mathcal{O}_{2}/p$ are semiperfect,
and that
	{\bf(b)}
both
$\mathcal{O}_{1}[\pi^{-1}]$ and $\mathcal{O}_{2}[\pi^{-1}]$
are algebraically maximal.
Then
\begin{align*}
	\underbrace{K_{1}\preceq K_{2}}_{\text{in }\Lval}
	&\Longleftrightarrow
	\underbrace{\mathcal{O}_{1}/\pi\preceq\mathcal{O}_{2}/\pi}_{\text{in }\Lring}
	\text{ and }
	\underbrace{\Gamma_{1}\preceq\Gamma_{2}}_{\text{in }\Loag}.
\end{align*}
	Moreover, if both $\Gamma_{1}$ and $\Gamma_{2}$ are regularly dense, then
\begin{align*}
	\underbrace{K_{1}\preceq K_{2}}_{\text{in }\Lval}
	&\Longleftrightarrow
	\underbrace{\mathcal{O}_{1}/\pi\preceq\mathcal{O}_{2}/\pi}_{\text{in }\Lring}.
\end{align*}
\end{theorem}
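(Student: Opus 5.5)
The plan is to derive this from the relative subcompleteness principle, Theorem~\ref{thm:AKE-relative_subcompleteness}, taking $K_{0}:=K_{1}$. The forward direction $(\Rightarrow)$ is immediate. The pointed structure $(K,v,\pi)$ interprets both $\mathcal{O}_{v}/\pi$ (as the quotient of the definable set $\mathcal{O}_{v}$ by the definable ideal $\pi\mathcal{O}_{v}$) and $\Gamma_{v}$ (as a sort). These interpretations are uniform, so an elementary extension $K_{1}\preceq K_{2}$ induces elementary extensions $\mathcal{O}_{1}/\pi\preceq\mathcal{O}_{2}/\pi$ and $\Gamma_{1}\preceq\Gamma_{2}$.

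For $(\Leftarrow)$, I would use the standard reformulation: $K_{1}\preceq K_{2}$ holds if and only if $K_{2}\equiv_{K_{1}}K_{1}$, that is, $K_{2}$ and $K_{1}$ have the same theory over parameters from $K_{1}$. This is exactly the conclusion of Theorem~\ref{thm:AKE-relative_subcompleteness} with the base $K_{0}=K_{1}$, where ``$K_{1}$'' plays the role of both $K_{0}$ and one of the two extensions. I would check the hypotheses as follows.
\begin{itemize}
\item Hypotheses (a) and (b) are exactly the assumptions of the present statement.
\item Hypothesis (c), applied to the pair $\Gamma_{0}=\Gamma_{1}$ and the extension $\Gamma_{2}$, reads $\Gamma_{1}\preceq_{\exists}\Gamma_{2}$. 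This follows from $\Gamma_{1}\preceq\Gamma_{2}$.
\item For the other extension, $\Gamma_{1}\preceq_{\exists}\Gamma_{1}$ is trivial.
\end{itemize}
The right-hand side of the equivalence there needs $\mathcal{O}_{2}/\pi\equiv_{\mathcal{O}_{1}/\pi}\mathcal{O}_{1}/\pi$ and $\Gamma_{2}\equiv_{\Gamma_{1}}\Gamma_{1}$. These are precisely restatements of $\mathcal{O}_{1}/\pi\preceq\mathcal{O}_{2}/\pi$ and $\Gamma_{1}\preceq\Gamma_{2}$. Hence $K_{2}\equiv_{K_{1}}K_{1}$, which is $K_{1}\preceq K_{2}$.

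Two bookkeeping points need care, and I expect them to be the main (mild) obstacle. First, Theorem~\ref{thm:AKE-relative_subcompleteness} is stated for mixed characteristic $(0,p)$, whereas here we only assume residue characteristic $p$. In equal characteristic $p$, semiperfectness of $\mathcal{O}_{v}/p=\mathcal{O}_{v}$ forces $K$ to be perfect. Then $\mathcal{O}[\pi^{-1}]$ algebraically maximal, together with $\Ring{v}/\pi$ semiperfect, should make each $K_{i}$ roughly tame (indeed tame after decomposition). So I would handle that case by the same embedding argument as in the sketch of Theorem~\ref{thm:AKE-relative_subcompleteness}, reducing to the roughly tame AKE principles, or alternatively to Theorem~\ref{thm:tame}(1). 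Second, one must note that $K_{1}\subseteq K_{2}$ as pointed fields, so $\pi\in K_{1}$. Consequently $\mathcal{O}_{1}/\pi$ embeds in $\mathcal{O}_{2}/\pi$ (because $\pi\mathcal{O}_{2}\cap\mathcal{O}_{1}=\pi\mathcal{O}_{1}$), and the parameter sets match as the previous theorem requires.

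Finally, the ``moreover'' clause follows the same way from the regularly dense case of Theorem~\ref{thm:AKE-relative_subcompleteness}, provided $\Gamma_{1}\preceq\Gamma_{2}$ can be dropped from the hypotheses. If both groups are regularly dense, I would argue that $\Gamma_{1}\preceq\Gamma_{2}$ is automatic given $\mathcal{O}_{1}/\pi\preceq\mathcal{O}_{2}/\pi$. The extension is pure because divisibility by $n$ of $\gamma=v(a)$ is detected via $p$-divisibility and the residue ring. Alternatively, one can invoke the model completeness of the theory of regularly dense groups with prescribed $n$-divisibility indices, which Robinson--Zakon establish. If this step resists, I would instead directly apply the last displayed equivalence of Theorem~\ref{thm:AKE-relative_subcompleteness} with $K_{0}=K_{1}$, which already omits the value group condition entirely.
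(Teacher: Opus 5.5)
Your proposal is correct and is the paper's proof: the paper simply specializes Theorem~\ref{thm:AKE-relative_subcompleteness} to $K_{0}=K_{1}$, and reads off the regularly dense clause from its last displayed equivalence, exactly as in your fallback. Your remark that Theorem~\ref{thm:AKE-relative_subcompleteness} is stated here only in mixed characteristic is fair: the equal-characteristic case needs the same reduction to tame fields via the coarsening $w$, a point the paper's one-line proof does not address.
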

\begin{proof}
	Take $K_{0}=K_{1}$ in Theorem~\ref{thm:AKE-relative_subcompleteness}.
\end{proof}

The following two theorems also follow cleanly from Theorem~\ref{thm:AKE-relative_subcompleteness}.

\begin{theorem}[{$\AKE^{\equiv}$ in positive characteristic, \cite[Theorem 5.1.7]{JahnkeKartas}}]\label{thm:AKE-elementary_equivalence}
	Let
	$K_{1}$ and $K_{2}$ be two perfect pointed henselian valued fields 
	extending
	$(\FF_{p}(\pi),v_{\pi},\pi)$.
Suppose that
	both $\mathcal{O}_{1}[\pi^{-1}]$ and $\mathcal{O}_{2}[\pi^{-1}]$ are algebraically maximal.
Then
\begin{align*}
	\underbrace{K_{1}\equiv K_{2}}_{\text{in }\Lval}
	&\Longleftarrow
	\underbrace{\mathcal{O}_{1}/\pi\equiv\mathcal{O}_{2}/\pi}_{\text{in }\Lring}
	\text{ and }
	\underbrace{(\Gamma_{1},v(\pi))\equiv(\Gamma_{2},v(\pi))}_{\text{in }\Loag(\digamma)}.
\end{align*}
\end{theorem}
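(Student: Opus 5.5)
The plan is to deduce the statement from Theorem~\ref{thm:AKE-relative_subcompleteness} (or rather from its proof), taking as common base the smallest natural pointed valued field, namely
\begin{align*}
K_{0}&:=(\FF_{p}(\pi)^{\hens},v_{\pi},\pi).
\end{align*}
By the universal property of the henselization, $K_{0}$ embeds uniquely over $(\FF_{p}(\pi),v_{\pi},\pi)$ into each henselian $K_{i}$, so we may regard $K_{0}\subseteq K_{1},K_{2}$. Two features of this choice make everything else work.
\begin{itemize}
\item Since the henselization does not change residue rings, $\mathcal{O}_{0}/\pi=\FF_{p}[\pi]_{(\pi)}/(\pi)=\FF_{p}$. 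Hence relative elementary equivalence over $\mathcal{O}_{0}/\pi$ is the same as plain $\Lring$-elementary equivalence.
\item $\Gamma_{0}=\ZZ v(\pi)$ is generated by $v(\pi)$. Hence $\Gamma_{1}\equiv_{\Gamma_{0}}\Gamma_{2}$ is exactly $(\Gamma_{1},v(\pi))\equiv(\Gamma_{2},v(\pi))$ in $\Loag(\digamma)$.
\end{itemize}
So the two hypotheses of the statement become precisely the right-hand side of Theorem~\ref{thm:AKE-relative_subcompleteness} for this $K_{0}$.

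Next I would check the structural hypotheses of Theorem~\ref{thm:AKE-relative_subcompleteness} for $K_{0},K_{1},K_{2}$.
\begin{itemize}
\item For $i=1,2$: since $K_{i}$ is perfect, $\mathcal{O}_{i}$ is a perfect ring, so $\mathcal{O}_{i}/p=\mathcal{O}_{i}$ is semiperfect. Algebraic maximality of $\mathcal{O}_{i}[\pi^{-1}]$ is assumed.
\item For $K_{0}$: $\mathcal{O}_{0}/p=\mathcal{O}_{0}/\pi\cdot(\text{char }p)$ is not literally perfect, but the relevant quotient $\mathcal{O}_{0}/\pi=\FF_{p}$ is semiperfect.
\item Since $v_{\pi}$ on $K_{0}$ has rank $1$, the coarsening $\mathcal{O}_{0}[\pi^{-1}]$ is the trivial valuation ring $K_{0}$, which is trivially algebraically maximal.
\end{itemize}

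Two gaps remain, and I would handle both by running the proof sketch of Theorem~\ref{thm:AKE-relative_subcompleteness} directly rather than citing it as a black box.

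First, that theorem is stated in mixed characteristic. In its proof, however, the mixed-characteristic assumption only serves to land in an embedding problem for roughly tame fields. In characteristic $p$ the same reduction works, because perfectness of the $K_{i}$ together with algebraic maximality of $\mathcal{O}_{i}[\pi^{-1}]$ makes the relevant coarsenings tame of equal characteristic, where Theorem~\ref{thm:tame} applies.

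Second, $\Gamma_{0}=\ZZ v(\pi)$ is not regularly dense, and hypothesis {\bf(c)} need not hold: for instance $x+x=v(\pi)$ is solvable in $\Gamma_{1}$ but not in $\Gamma_{0}$. I expect this to be the main obstacle. The way I would try to get around it is to observe where {\bf(c)} is used in the proof: only to ensure that $\Gamma_{w_{1}}/\Gamma_{w_{0}}$ is torsion-free, where $w_{i}$ is the coarsest coarsening with $w_{i}(\pi)>0$ (after passing to $\aleph_{1}$-saturated ultrapowers by Keisler--Shelah). Here $w_{0}$ is trivial on $K_{0}$, because $v(\pi)$ generates all of $\Gamma_{0}$ and so $\Gamma_{0}\subseteq\Delta_{\pi+}$. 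Thus $\Gamma_{w_{0}}=0$ and torsion-freeness is automatic.

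Similarly, $k_{w_{0}}=K_{0}$ sits inside $k_{w_{i}}$ separably, which needs checking but should follow from perfectness of $k_{w_{i}}$ and separability of $\FF_{p}(\pi)^{\hens}/\FF_{p}(\pi)$. The ultrapower isomorphisms $\mathcal{O}_{1}/\pi\cong\mathcal{O}_{2}/\pi$ and $(\Gamma_{1},v(\pi))\cong(\Gamma_{2},v(\pi))$ then induce $\varphi'$ and $\psi'$ as in that sketch, now over $k_{w_{0}}=K_{0}$ and $\Gamma_{w_{0}}=0$. The remaining embedding problem of (roughly) tame fields is solved by the embedding lemma behind Theorem~\ref{thm:tame} and Theorem~\ref{thm:STVFe}. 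A back-and-forth between the saturated models then gives $K_{1}\equiv_{K_{0}}K_{2}$, and in particular $K_{1}\equiv K_{2}$ in $\Lval$.
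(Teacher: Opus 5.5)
\paragraph{The gap.} The step that fails is your claim that $w_0$ is trivial on $K_0$. By definition $w_0$ is the coarsest coarsening with $w_0(\pi)>0$, so it corresponds to $\Delta_{\pi-}$, not to $\Delta_{\pi+}$. What is trivial on $K_0$ is $v_{\pi+}$, i.e.\ $\mathcal{O}_0[\pi^{-1}]=K_0$. Since $\Gamma_0=\ZZ v(\pi)$ has no nonzero elements infinitesimal relative to $v(\pi)$, and neither does its ultrapower $\ZZ^{\ult}v(\pi)$, in fact $w_0=v_0$, $\Gamma_{w_0}=\Gamma_0$ and $k_{w_0}=\FF_p$.

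Consequently $\Gamma_{w_1}/\Gamma_{w_0}$ is \emph{not} torsion-free. Since $K_1$ is perfect, $v(\pi)/p\in\Gamma_1$, and its class is $p$-torsion. Ruling out exactly this is the job of hypothesis {\bf(c)}, or of regular density of $\Gamma_0$ (which makes $\Gamma_{w_0}$ divisible after saturation).

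The failure is fatal, not technical: the intermediate conclusion $K_1\equiv_{K_0}K_2$ is false in general. Take $p=3$ and $K_1=K_2=\Gps{\FF_3}{s}^{\perf}$, pointed by $\pi\mapsto s^2$ and $\pi\mapsto -s^2$ respectively.
\begin{itemize}
\item All hypotheses hold.
\item $\mathcal{O}_1/\pi$ and $\mathcal{O}_2/\pi$ are literally the same ring $\mathcal{O}/(s^2)$.
\item With $v(s)=1$, both pairs $(\Gamma_i,v(\pi))$ equal $(\ZZ[1/3],2)$.
\end{itemize}
Yet $\exists x\,(x^2=\pi)$ holds in $K_1$ and fails in $K_2$, since $-1$ is not a square in $\FF_3$. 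So no argument over $K_0=\FF_p(\pi)^{\hens}$ can succeed. The theorem holds only because its conclusion is in $\Lval$ without naming $\pi$, which is also why only ``$\Longleftarrow$'' is asserted. Note too that hypothesis {\bf(a)} genuinely fails for your $K_0$, which is imperfect; semiperfectness of $\mathcal{O}_0/\pi$ is not what is required.

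\paragraph{A repair.} You can stay within the intended strategy of deducing the result from Theorem~\ref{thm:AKE-relative_subcompleteness} by taking $K_0=\FF_p(\pi)^{\hens,\perf}$.
\begin{itemize}
\item It is perfect, and $\mathcal{O}_0[\pi^{-1}]=K_0$.
\item $\Gamma_0=\ZZ[1/p]\,v(\pi)$ is dense and archimedean, hence regularly dense, so {\bf(c)} may be dropped.
\item $\Gamma_1\equiv_{\Gamma_0}\Gamma_2$ is equivalent to $(\Gamma_1,v(\pi))\equiv(\Gamma_2,v(\pi))$, because each $v(\pi)/p^n$ is definable from $v(\pi)$.
\end{itemize}
The price is that $\mathcal{O}_0/\pi=\FF_p[\pi^{1/p^\infty}]/(\pi)$ is no longer trivial. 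The example shows that plain $\mathcal{O}_1/\pi\equiv\mathcal{O}_2/\pi$ does not give equivalence over it for the given $\pi$, so you must re-choose $\pi$ in $K_2$:
\begin{itemize}
\item Pass to $\aleph_1$-saturated ultrapowers carrying a ring isomorphism $\varphi:\mathcal{O}_1/\pi\to\mathcal{O}_2/\pi$.
\item Lift $a_n:=\varphi(\overline{\pi^{1/p^n}})$ to elements $b_n\in\mathcal{O}_2$. Since $b_{n+1}^{p^{n+1}}-b_n^{p^n}\in\pi^{p^n}\mathcal{O}_2$, saturation gives $\pi'$ with $\pi'-b_n^{p^n}\in\pi^{p^n}\mathcal{O}_2$ for all $n$.
\item Hence $\overline{(\pi')^{1/p^n}}=a_n$. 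Moreover $v(\pi')=v(\pi)$, because $\varphi$ preserves the definable ideal $\{x\mid x^p=0\}$, which is generated by $\overline{\pi^{1/p}}$.
\end{itemize}
Now $\varphi$ is an isomorphism over $\mathcal{O}_0/\pi$ for the embeddings $\pi\mapsto\pi$ and $\pi\mapsto\pi'$, and $(\Gamma_2,v(\pi'))=(\Gamma_2,v(\pi))$. Theorem~\ref{thm:AKE-relative_subcompleteness} then gives $(K_1,\pi)\equiv(K_2,\pi')$, whence $K_1\equiv K_2$ in $\Lval$.
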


\begin{theorem}[{$\AKE^{\preceq_{\exists}}$, \cite[Theorem 5.1.10]{JahnkeKartas}}]\label{thm:AKE-existential_closedness}
	Let $K_{1}$ be a pointed henselian valued field of residue characteristic $p$.
	Suppose that {\bf(a)}
    the ring
	$\mathcal{O}_{1}/p$ is semiperfect
	and that {\bf(b)}
    the valuation ring
	$\mathcal{O}_{1}[\pi^{-1}]$ is algebraically maximal.
	Let $K_{2}$ be any pointed valued field extending $K_{1}$.
Then
\begin{align*}
	\underbrace{K_{1}\preceq_{\exists}K_{2}}_{\text{in }\Lval}
	&\Longleftrightarrow
	\underbrace{\mathcal{O}_{1}/\pi\preceq_{\exists}\mathcal{O}_{2}/\pi}_{\text{in }\Lring}
	\text{ and }
	\underbrace{(\Gamma_{1},v(\pi))\preceq_{\exists}(\Gamma_{2},v(\pi))}_{\text{in }\Loag(\digamma)}.
\end{align*}
Moreover if $\Gamma_{1}$ is regularly dense
then 
\begin{align*}
	\underbrace{K_{1}\preceq_{\exists}K_{2}}_{\text{in }\Lval}
	&\Longleftrightarrow
	\underbrace{\mathcal{O}_{1}/\pi\preceq_{\exists}\mathcal{O}_{2}/\pi}_{\text{in }\Lring}.
\end{align*}
\end{theorem}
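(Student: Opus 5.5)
We prove Theorem~\ref{thm:AKE-existential_closedness} by reducing it to Theorem~\ref{thm:AKE-relative_subcompleteness}. The base will be $K_{0}:=K_{1}$, and the second field will be a suitable elementary extension of $K_{1}$ that contains a copy of $K_{2}$.

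The direction $(\Rightarrow)$ is routine. The ring $\mathcal{O}/\pi$ and the pointed value group $(\Gamma,v(\pi))$ are interpretable in $(K,v,\pi)$, and the interpretation is uniform in the pair $K_{1}\subseteq K_{2}$. Hence an existential formula in $\Lring$ over $\mathcal{O}_{1}/\pi$, or in $\Loag(\digamma)$ over $\Gamma_{1}$, translates into an existential $\Lval(\pi)$-formula over $K_{1}$. The translation needs care: ``$x\equiv y\bmod\pi$'' becomes $v(x-y)\geq v(\pi)$, which is quantifier-free, and elements of the quotient are represented by elements of $\mathcal{O}$, which are existentially quantified. So existential closedness transfers downward.

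For $(\Leftarrow)$ we use the standard characterization of existential closedness. It suffices to find an elementary extension $K_{1}^{*}\succeq K_{1}$, taken $|K_{2}|^{+}$-saturated, together with an $\Lval(\pi)$-embedding $K_{2}\to K_{1}^{*}$ over $K_{1}$. The first step is to shrink the problem. Since existential statements only involve finitely generated data, and since henselizations embed uniquely, we may replace $K_{2}$ by a henselian extension. The hypotheses on the residue rings and value groups give, by saturation, embeddings $\mathcal{O}_{2}/\pi\to\mathcal{O}_{1}^{*}/\pi$ over $\mathcal{O}_{1}/\pi$ and $(\Gamma_{2},v(\pi))\to(\Gamma_{1}^{*},v(\pi))$ over $\Gamma_{1}$. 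We then follow the proof sketch of Theorem~\ref{thm:AKE-relative_subcompleteness}. Let $w$ denote the coarsest coarsening with $w(\pi)>0$. The map of residue rings induces an embedding of the reduced rings $(\mathcal{O}/\pi)_{\red}=\mathcal{O}_{\bar v}$, hence of the residue valued fields $(k_{w},\bar v)$. The value-group map induces an embedding of the groups $\Gamma_{w}$. Because $K_{1}$ satisfies (a) and (b), $(K_{1},w)$ is roughly tame, so $k_{w_{2}}/k_{w_{1}}$ is separable and $\Gamma_{w_{2}}/\Gamma_{w_{1}}$ is torsion-free. The embedding lemma for roughly tame fields, as used in Theorem~\ref{thm:AKE-relative_subcompleteness} and analogous to Lemma~\ref{lem:embedding_Hz}, then produces the embedding $K_{2}\to K_{1}^{*}$ over $K_{1}$.

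\textbf{Main obstacle.} Unlike in Theorem~\ref{thm:AKE-relative_subcompleteness}, $K_{2}$ is only assumed to be a pointed valued field: it need not satisfy (a) or (b), and it need not be henselian. The embedding argument must therefore only use the tameness of the ground field $K_{1}$ and of the target $K_{1}^{*}$. This is exactly what an existential-closedness embedding lemma for tame fields provides: Kuhlmann's $\AKE^{\preceq_\exists}$ needs only the smaller field to be tame, via the relative embedding property. I would first try to prove that $K_{1}$, viewed with the coarsening $w$, has this relative embedding property, by composing the statement for equal characteristic zero (Lemma~\ref{lem:embedding_Hz}) on $w$ with the one for the rank-one almost tame residue field. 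A second difficulty is that $\mathcal{O}_{2}/\pi$ may be non-reduced and may have nilpotents beyond those accounted for by $\bar v$. The embedding of $\mathcal{O}/\pi$ must then be recovered from the embedding of valued fields, using the fact that $\mathcal{O}_{2}/\pi\to\mathcal{O}_{1}^{*}/\pi$ is already given and is compatible with it.

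For the ``moreover'' clause, suppose $\Gamma_{1}$ is regularly dense. By the result of Robinson--Zakon, $\Gamma_{1}$ is then elementarily equivalent to a dense archimedean ordered abelian group. In such groups every existential $\Loag(\digamma)$-sentence over $\Gamma_{1}$ that holds in an extension already holds in $\Gamma_{1}$; equivalently, $\Gamma_{1}\preceq_{\exists}\Gamma_{2}$ holds automatically, after possibly passing through a divisible hull argument. So the value-group condition can be dropped.
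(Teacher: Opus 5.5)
Your overall route is the one the paper intends: the argument of Theorem~\ref{thm:AKE-relative_subcompleteness}, namely saturate, pass to the coarsening $w$, and solve a roughly tame embedding problem. As written, however, two steps fail.

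\textbf{Gap 1: the base field is never saturated.} Without saturation, the claim that (a) and (b) make $(K_{1},w)$ roughly tame is false. Take $K_{1}=\widehat{\ps{\FF_{p}}^{\perf}}$ with $\pi=t$. It is perfectoid, hence in $\CAT_{p}$. Here $w=v$ has rank $1$, and $\theta^{p}-\theta=t^{-1}$ defines an immediate defect extension. So no relative embedding property over $K_{1}$ is available, however saturated the target $K_{1}^{*}$ is.

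You must first replace $K_{1}\subseteq K_{2}$ by $K_{1}^{\ult}\subseteq K_{2}^{\ult}$ for one nonprincipal ultrafilter $\ult$ on $\NN$. The hypotheses, including the two $\preceq_{\exists}$ conditions, pass up by Łoś, and $K_{1}^{\ult}\preceq_{\exists}K_{2}^{\ult}$ implies $K_{1}\preceq_{\exists}K_{2}$. Only then is the rank-one piece of the standard decomposition spherically complete and $(K_{1},w_{1})$ roughly tame.

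Only then can the step you leave open be closed. The tool is the resplendent $\AKE^{\preceq_{\exists}}$ for roughly tame fields over an \emph{arbitrary} extension (Kuhlmann's theorem for tame fields combined with the residue-characteristic-zero theory), applied with $k_{w}$ carrying $\bar{v}$. Note that $w$ is not of residue characteristic zero in general, so Lemma~\ref{lem:embedding_Hz} does not apply ``on $w$''.

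Several smaller points in the same part of the argument:
\begin{itemize}
\item Torsion-freeness of $\Gamma_{w_{2}}/\Gamma_{w_{1}}$ comes from the value-group hypothesis plus saturation, not from tameness of $K_{1}$.
\item The passage to residue valued fields is fine, but for a reason you do not give: any ring embedding $\mathcal{O}_{2}/\pi\to\mathcal{O}_{1}^{*}/\pi$ is local, because the non-units of $\mathcal{O}/\pi$ are exactly the zero-divisors, which is an existential condition.
\item Your ``second difficulty'' about nilpotents is a red herring. Any embedding of pointed valued fields $K_{2}\to K_{1}^{*}$ over $K_{1}$ suffices.
\item The opening ``reduction'' to Theorem~\ref{thm:AKE-relative_subcompleteness} with $K_{0}=K_{1}$ is vacuous. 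An elementary extension of $K_{1}$ containing a copy of $K_{2}$ over $K_{1}$ is exactly what is to be proved.
\end{itemize}

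\textbf{Gap 2: the ``moreover'' clause.} Your argument is wrong. A regularly dense group need not be existentially closed in its extensions: take $\ZZ[1/p]\subseteq\QQ$ and the sentence $\exists x\,(qx=1)$ for a prime $q\neq p$. Such extensions are excluded by the residue-ring hypothesis instead. Indeed, $\mathcal{O}/\pi$ detects $n$-divisibility of values below $v(\pi)$ via $a=ub^{n}$ with $u$ a unit, and regular density moves every value into that range modulo $n\Gamma_{1}$.

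The working mechanism again uses saturation. For $\gamma\in\Gamma_{1}$ and $n\geq1$, regular density makes $\{0\leq m(\gamma-nx)<v(\pi): m\in\NN\}$ finitely satisfiable. Hence $\gamma\in n\Gamma_{1}+\Delta_{\pi-}$, so $\Gamma_{w_{1}}=\Gamma_{1}/\Delta_{\pi-}$ is divisible and therefore existentially closed in every ordered abelian group extension.

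The roughly tame embedding problem only needs value-group data at the level of $\Gamma_{w}$. The remaining part $\Delta_{\pi-}$ is the value group of $\bar{v}$, and it is controlled by $(\mathcal{O}/\pi)_{\red}=\mathcal{O}_{\bar{v}}$. This is why the value-group hypothesis can be dropped.
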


\begin{corollary}[{\cite[Corollary 5.2.2]{JahnkeKartas}}]\label{cor:AKE}
Let $k$ be a perfect field of characteristic $p>0$.
We have the following new examples:
	\begin{enumerate}[{\bf(i)}]
		\item
			$(k(t^{p^{-\infty}})^{h},v_{t})\preceq(\ps{k}^{\perf},v_{t})\preceq(\widehat{\ps{k}^{\perf}},v_{t})$,
		\item
			$(\ps{k}^{\perf},v_{t})\preceq(\Gps{\ps{k}^{\perf}}{z^{\QQ}},v_{z}\circ v_{t})$,
            and
 		\item
			$(\ps{\varinjlim k_{i}}^{\perf},v_{t})\preceq(\ps{\bar{k}}^{\perf},v_{t})$,
			where $k_{i}$ runs over the directed system of finite extensions of $k$ and $\bar{k}$ is the algebraic closure of $k$.
	\end{enumerate}
    As a particular case of {\bf(i)} we have
    \begin{align*}
        \FF_{p}(t)^{h,\perf}\preceq\ps{\FF_{p}}^{\perf},
    \end{align*}
    which is the perfect-hull version of the long sought-after 
\begin{question}
Is $\FF_{p}(t)^{h}$ an elementary substructure of $\ps{\FF_{p}}$?
\end{question}
\end{corollary}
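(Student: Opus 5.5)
The plan is to deduce each item from the $\AKE^{\preceq}$ principle, Theorem~\ref{thm:AKE-elementary_substructure}, applied with the pointed valued fields taken with $\pi=t$ (in {\bf(ii)}, still $\pi=t$, now with $v_{z}\circ v_{t}$ as the valuation). For each inclusion $K_{1}\subseteq K_{2}$ in the statement I must check three things. First, both fields are henselian with $\Ring{v}/t$ semiperfect. Second, both $\Ring{v}[t^{-1}]$ are algebraically maximal. Third, $\Ring{1}/t\preceq\Ring{2}/t$ in $\Lring$ and $\Gamma_{1}\preceq\Gamma_{2}$ in $\Loag$; where the value groups are regularly dense, only the residue ring condition is needed.

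For {\bf(i)}, all three fields are perfect, henselian, of rank $1$, with value group $\ZZ[1/p]$, which is dense archimedean and hence regularly dense by Robinson--Zakon. In fact the value groups are equal, so the group condition holds trivially anyway. Semiperfectness of $\Ring{v}/t$ follows from perfectness of the field. In rank $1$, $\Ring{v}[t^{-1}]=K$ carries the trivial valuation, which is algebraically maximal. The key computation is that the residue rings coincide: $\Ring{v}/t\cong k[t^{1/p^{\infty}}]/(t)$ in all three cases. Each of the fields is dense in the completion $\widehat{\ps{k}^{\perf}}$, and $\Ring{v}/t$ only sees the valuation modulo $t$. Hence the inclusions induce isomorphisms on $\Ring{v}/t$, and in particular elementary embeddings. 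Henselianity of $k(t^{p^{-\infty}})^{h}$ holds by construction. For $\ps{k}^{\perf}$ it holds because it is an algebraic (purely inseparable) extension of a henselian field.

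For {\bf(ii)}, the value groups are $\ZZ[1/p]\subseteq\QQ\times\ZZ[1/p]$ (lexicographic, with $z$ infinitesimal relative to $t$). Here the $\Ring{v}[t^{-1}]$-valuation on the big field is $v_{z}$, which has the perfect-power-series residue field $\ps{k}^{\perf}$. This Hahn-type field is maximal, hence algebraically maximal, and it is henselian as a composite of henselian valuations. Semiperfectness follows from perfectness, since $\QQ$-exponents and a perfect coefficient field make the field perfect. The residue rings modulo $t$ again agree, because elements of positive $z$-value are all divisible by $t$. So $\Ring{v}/t$ on the big field is $\Ring{}/t$ of $\ps{k}^{\perf}$, modulo the ideal of infinitesimals. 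I expect the delicate point to be this: the big ring modulo $t$ is not literally equal to the small one, since it contains the nilpotent-like infinitesimals $z^{q}$, $q>0$, which are nonzero mod $t$. So I would instead verify the value group condition $\ZZ[1/p]\preceq\ZZ[1/p]\times_{\mathrm{lex}}\QQ$. This holds because both are regularly dense with the same prime-index data: $n$-divisibility quotients agree for all $n$. Then I would check $\Ring{1}/t\preceq\Ring{2}/t$ via a separate ring-level argument, or alternatively route through Theorem~\ref{thm:AKE-relative_subcompleteness} and the coarsening lemma. This is the main obstacle I foresee.

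For {\bf(iii)}, the value groups are both $\ZZ[1/p]$, and both fields are henselian and perfect, so everything reduces to $k'[t^{1/p^{\infty}}]/(t)\preceq\bar{k}[t^{1/p^{\infty}}]/(t)$, where $k'=\varinjlim k_{i}$ is the algebraic closure of $k$. Thus $k'=\bar{k}$ up to the chosen presentation, so this should be immediate once the rings are identified. Alternatively, I would argue that $k'\preceq\bar{k}$ as algebraically closed fields by model completeness of $\ACF$ and transport this to the residue rings, which are interpretable in the coefficient field uniformly. The particular case $\FF_{p}(t)^{h,\perf}\preceq\ps{\FF_{p}}^{\perf}$ is {\bf(i)} with $k=\FF_{p}$, after noting that $\FF_{p}(t)^{h,\perf}=\FF_{p}(t^{p^{-\infty}})^{h}$.
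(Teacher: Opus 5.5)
Your route is the intended one: apply Theorem~\ref{thm:AKE-elementary_substructure} with $\pi=t$. Your treatment of {\bf(i)} and of the $\FF_{p}$ case is correct. The gap is in {\bf(ii)}. There you leave the one essential hypothesis, $\mathcal{O}_{1}/t\preceq\mathcal{O}_{2}/t$, unverified, because of a false claim: the elements $z^{q}$ with $q>0$ are \emph{zero} modulo $t$, exactly as your preceding sentence says. The value group is $\QQ\times_{\mathrm{lex}}\ZZ[1/p]$ with the $z$-coordinate dominant, so $v(z^{q})=(q,0)>(0,1)=v(t)$. More precisely, $t$ is a $v_{z}$-unit, so $\mathfrak{m}_{v_{z}}=t\,\mathfrak{m}_{v_{z}}\subseteq t\mathcal{O}_{2}$. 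Also $\mathcal{O}_{2}=\mathcal{O}_{1}+\mathfrak{m}_{v_{z}}$, by splitting off the $z^{0}$-coefficient. Hence $\mathcal{O}_{1}/t\to\mathcal{O}_{2}/t$ is surjective with kernel $\mathcal{O}_{1}\cap t\mathcal{O}_{2}=t\mathcal{O}_{1}$. So the inclusion induces an isomorphism modulo $t$, and there is no need to detour through Theorem~\ref{thm:AKE-relative_subcompleteness}. Your fallback of checking the value group condition ``instead'' could not have closed this gap, since the ring condition is required in every case. In fact $\QQ\times_{\mathrm{lex}}\ZZ[1/p]$ is itself regularly dense, so the group condition is superfluous here. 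Note also that equal indices $[\Gamma:n\Gamma]$ would only give $\equiv$; for $\preceq$ you would also need purity of $b\mapsto(0,b)$ (which does hold).

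In {\bf(iii)} you read the left-hand side as $\ps{\bar{k}}^{\perf}$, which turns the item into a tautology. The statement contrasts $\varinjlim k_{i}$ with $\bar{k}$, which signals that the intended field is the directed union $\bigcup_{i}\ps{k_{i}}^{\perf}$. This is a proper subfield of $\ps{\bar{k}}^{\perf}$: it misses series whose coefficients generate an infinite extension of $k$. Your alternative via model completeness of $\ACF$ is therefore beside the point. What actually has to be checked is the following. The union is henselian (a directed union of henselian fields), perfect, of rank $1$, with value group $\ZZ[1/p]$. The inclusion is an isomorphism modulo $t$: every $x\in\ps{\bar{k}}^{\perf}$ lies in some $\Gps{\bar{k}}{t^{1/p^{n}}}$, and its truncation to exponents $<1$ is a finite sum congruent to $x$ modulo $t$, whose finitely many coefficients lie in a single $k_{i}$. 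So both residue rings are indeed $\bar{k}[t^{1/p^{\infty}}]/(t)$, as you wrote, but for this reason. Theorem~\ref{thm:AKE-elementary_substructure} then applies exactly as in {\bf(i)}.
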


% \subsection{Tilting adjectives}

\section{The Main Theorem of Jahnke and Kartas}

\begin{definition}
	We define the \define{sharp} maps
	$\sharp_{n}:K^{\flat}\rightarrow K$
	by $\sharp_{n}(x_{i})=x_{n}$, for each $n<\omega$.
	We write $\sharp=\sharp_{0}$
	and $x^{\sharp}=\sharp(x)$ for all $x\in K^{\flat}$.
	We define the \define{natural} map
	$\natural:K^{\flat}\rightarrow K^{\ult}$
	by $\natural(x_{i})=x^{\natural}:=(x_{i})^{\ult}$.
\end{definition}

So $\natural$ sends each $x=(x_{i})$ to the ultralimit of a compatible system of $p^{n}$-th roots of $x_{0}$;
it is literally the ultralimit of the maps $\sharp_{n}$.

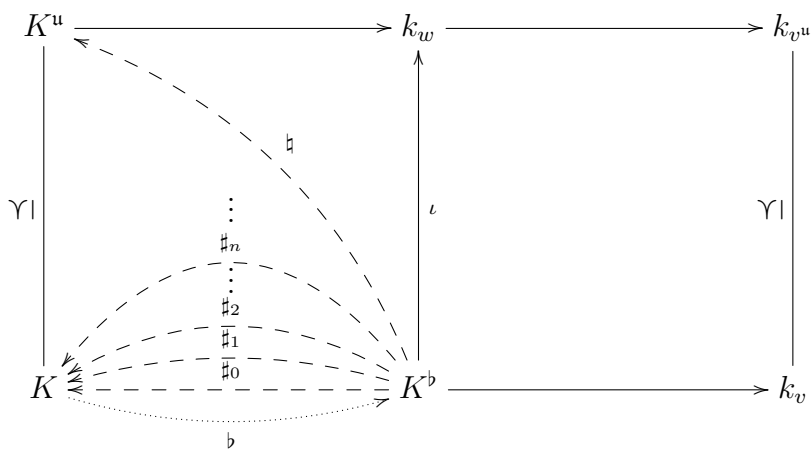
\begin{figure}[ht]
  \begin{align*}
  	\xymatrix@=10.0em{
		K^{\ult}
  	\ar@{->}[r]^{}
&
	  k_{w}
  	\ar@{->}[r]^{}
&
		k_{v^{\ult}}
\\
		K
	\ar@{-}[u]^{\rotatebox{90}{$\preceq$}}
	\ar@/_1.0pc/@{.>}[r]_{\flat}
	  % \ar@/_1.0pc/@[red][rr]
&
		K^{\flat}
	\ar@/_0.0pc/@{-->}[l]_{\sharp_{0}}
	\ar@/_1.0pc/@{-->}[l]_{\sharp_{1}}
	\ar@/_2.0pc/@{-->}[l]_{\sharp_{2}}
	  \ar@/_4.0pc/@{-->}[l]_{\sharp_{n}}^*--{\vdots}_*+++{\vdots}
	\ar@/_2.0pc/@{-->}[ul]_{\natural}
	\ar@{->}[u]_{\iota}
  	\ar@{->}[r]^{}
&
% 	  \mathcal{O}_{v^{\flat}}/\pi^{\flat}
%   	\ar@{->}[r]^{}
% &
		k_{v}
	\ar@{-}[u]^{\rotatebox{90}{$\preceq$}}
  	}
  \end{align*}
\caption{Illustration of the proof of Theorem~\ref{thm:first}}
\end{figure}

\begin{lemma}
	\
	\begin{enumerate}[{\bf(i)}]
		\item
			$\natural(K^{\flat\times})\subseteq\mathcal{O}_{w}^{\times}$
		\item
			$\natural(\mathcal{O}_{K^{\flat}})\subseteq\mathcal{O}_{v^{\ult}}$
		\item
			$\natural(\mathfrak{m}_{K^{\flat}})=\natural(\mathcal{O}_{K^{\flat}})\cap\mathfrak{m}_{v^{\ult}}$
		\item
			$\natural$ is multiplicative and additive modulo $p$
	\end{enumerate}
			Consequently
			$\iota:=\res_{w}\circ\natural$
			is a well-defined embedding of valued fields
			$(K^{\flat},v^{\flat})\rightarrow(k_{w},\bar{v}^{\ult})$.
\end{lemma}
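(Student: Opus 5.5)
Throughout I take $(K,v)$ perfectoid with rank-$1$ value group $vK\subseteq\RR$, $\ult$ a nonprincipal ultrafilter on $\omega$, and $(K^{\ult},v^{\ult})$ the ultrapower. I take $w$ to be the coarsening of $v^{\ult}$ whose convex subgroup $\Delta$ consists of the elements of $v^{\ult}K^{\ult}$ that are infinitesimal with respect to $v(p)$, i.e.\ $|\gamma|<v(p)/m$ for all $m\ge1$; this is $(v^{\ult})_{p-}$ in the notation of Remark~\ref{rem:standard_decomposition}. I take $\bar v^{\ult}$ to be the valuation induced by $v^{\ult}$ on $k_w$.

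The first step is to make the sharp maps concrete via the standard multiplicative identification $\Ring{\tilt v}\cong\varprojlim_{x\mapsto x^{p}}\Ring{v}$. A compatible system $(\bar x_i)$ of elements of $\Ring{v}/p$ lifts uniquely to $(x^{(i)})$ in $\Ring{v}$ with $(x^{(i+1)})^{p}=x^{(i)}$, where $x^{(i)}=\lim_m \tilde x_{i+m}^{p^m}$ for arbitrary lifts $\tilde x_j$; convergence and independence of the lifts come from the usual estimate $a\equiv b \bmod p \Rightarrow a^{p^m}\equiv b^{p^m}\bmod p^{m+1}$. Under this identification $\sharp_n(x)=x^{(n)}$ and $v(x^{(n)})=\tilde v(x)/p^{n}$, where $\tilde v=v^{\flat}$. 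Since $K^{\flat}$ is the fraction field, I extend $\sharp_n$ and $\natural$ multiplicatively to $K^{\flat\times}$ (every element is $x/y$ with $x,y$ integral) and set $\natural(0)=0$.

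The four items then follow. For (i), if $x\neq0$ then $v^{\ult}(\natural x)=\ulim_n \tilde v(x)/p^{n}$. This is a nonzero element of $v^{\ult}K^{\ult}$, and for each fixed $m$ its absolute value is eventually below $v(p)/m$ in the standard archimedean group, so by Łoś it lies in $\Delta$; hence $\natural x\in\Ring{w}^{\times}$. For (ii), if $\tilde v(x)\ge0$ then every coordinate lies in $\Ring{v}$, so Łoś gives $\natural x\in\Ring{v^{\ult}}$. For (iii), if $\tilde v(x)>0$ then every coordinate has positive valuation, so $\natural x\in\Ideal{v^{\ult}}$; if $\tilde v(x)=0$ then every coordinate is a unit, so $\natural x$ is a $v^{\ult}$-unit. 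For (iv), multiplicativity holds coordinatewise. Additivity modulo $p$ on integral elements comes from $(x+y)^{(n)}\equiv x^{(n)}+y^{(n)}\pmod{p}$: reducing mod $p$ returns the $n$-th component in $\varprojlim \Ring{v}/p$, where addition is componentwise. Hence $\natural(x+y)-\natural x-\natural y\in p\Ring{v^{\ult}}$.

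For the consequence, note that $p\Ring{v^{\ult}}\subseteq\Ideal{w}$ because $v(p)\notin\Delta$. So on $\Ring{\tilde v}$ the map $\iota=\res_w\circ\natural$ is additive, multiplicative and unital. For general $x,y\in K^{\flat}$ I pick $c\in\Ring{\tilde v}\setminus\{0\}$ with $cx,cy$ integral and use $\iota(c)\iota(x+y)=\iota(cx+cy)=\iota(cx)+\iota(cy)$ together with $\iota(c)\neq0$ from (i). Thus $\iota$ is a ring homomorphism of fields, hence injective. Compatibility with valuations also follows: $\bar v^{\ult}(\iota x)\ge0$ iff $\natural x\in\Ring{v^{\ult}}$ (since $\natural x$ is a $w$-unit), which by (ii) and (iii) applied to $x$ and $x^{-1}$ happens iff $\tilde v(x)\ge0$.

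The main obstacle I expect is the bookkeeping in the first step: ensuring that the $\sharp_n$ are well defined and that additivity mod $p$ survives the limit defining $x^{(n)}$, and that everything extends from $\Ring{\tilde v}$ to the fraction field. I would handle this by working with the $p^{m+1}$-congruence estimate and only passing to fractions at the very end.
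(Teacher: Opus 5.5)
Your proposal is correct and supplies exactly the routine verification that the paper omits (it only says the lemma is ``relatively routine to verify''). The ingredients are the multiplicative model $\varprojlim_{x\mapsto x^{p}}\mathcal{O}_v$, coordinatewise Łoś arguments, and the common-denominator trick to pass from $\mathcal{O}_{K^{\flat}}$, where additivity modulo $p$ actually holds, to all of $K^{\flat}$. There are two harmless slips: in (i) the value $\ulim_n\tilde v(x)/p^{n}$ is $0$, not nonzero, when $\tilde v(x)=0$; and you should remark that your $w=(v^{\ult})_{p-}$ is the paper's $w=(v^{\ult})_{\pi-}$, because $v(p)$ and $v(\pi)$ are archimedean-equivalent in the rank-one group $vK$ and hence also in its ultrapower.
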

This is relatively routine to verify.
It yields what I would describe as the first main theorem of \cite{JahnkeKartas}, which describes three surprising elementary embeddings of fields and valued fields.

In the following, a \define{distinguished uniformizer} in a valued field $(K,v)$ is an element of $\mathfrak{m}_{v}\setminus\{0\}\cap K^{(p^{\infty})}$.

\begin{theorem}[{First Main Theorem, \cite[Theorem 6.1.3]{JahnkeKartas}}]\label{thm:first}
	Let $(K,v)$ be a perfectoid field with a distinguished uniformizer $\pi$,
and
	let $\pi^{\flat}\in K^{\flat}$ be any preimage of $\pi$ under the map $\sharp_{0}$.
Let $\ult$ be a nonprincipal ultrafilter on $\NN$.
	Let $w$ be the coarsest coarsening of $v^{\ult}$ such that $w(\pi)>0$.
Then there are three elementary embeddings:
	\begin{enumerate}[{\bf(i)}]
		\item
The map $\natural$ induces an $\Lring$-elementary embedding 
			\begin{align*}
				\alpha:
				\mathcal{O}_{v^{\flat}}/\pi^{\flat}
				&\rightarrow
				\mathcal{O}_{v^{\ult}}/\pi^{\flat\natural}
				\\
				x+(\pi^{\flat})
				&\mapsto
				x^{\natural}+(\pi^{\flat\natural}).
			\end{align*}
		\item
The map $\iota$ induces an $\Lring$-elementary embedding 
			\begin{align*}
				\bar{\iota}:
				\mathcal{O}_{v^{\flat}}/\pi^{\flat}
				&\rightarrow
				\mathcal{O}_{\bar{v}^{\ult}}/\res_{w}\pi^{\flat\natural}
				\\
				x+(\pi^{\flat})
				&\mapsto
				\iota x+(\res_{w}\pi^{\flat\natural}).
			\end{align*}
		\item
			The map $\iota:(K^{\flat},v^{\flat})\rightarrow(k_{w},\bar{v}^{\ult})$ is itself an $\Lval$-elementary embedding.
	\end{enumerate}
\end{theorem}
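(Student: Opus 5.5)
The plan is to reduce all three statements to Łoś's Theorem and the $\AKE^{\preceq}$ principle, Theorem~\ref{thm:AKE-elementary_substructure}. Write $R:=\mathcal{O}_{v^{\flat}}/\pi^{\flat}$, $\pi_{n}:=(\pi^{\flat})_{n}$ (a compatible $p^{n}$-th root of $\pi$), and $R_{n}:=\mathcal{O}_{v}/\pi_{n}$.

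For \textbf{(i)}, I first check that each $\sharp_{n}$ induces a ring isomorphism $\sigma_{n}:R\to R_{n}$, $x+(\pi^{\flat})\mapsto x_{n}+(\pi_{n})$. For $n=0$ this is the standard perfectoid fact that $\mathcal{O}_{v^{\flat}}/\pi^{\flat}\cong\mathcal{O}_{v}/\pi$ via $\sharp$. For general $n$, compose it with two isomorphisms:
\begin{itemize}
\item the inverse Frobenius $x\mapsto x^{1/p^{n}}$ on the perfect ring $\mathcal{O}_{v^{\flat}}$, which carries $(\pi^{\flat})$ to $(\pi^{\flat 1/p^{n}})$;
\item the identification $\mathcal{O}_{v}/\pi_{n}\cong\mathcal{O}_{v}/\pi$ induced by Frobenius, valid because $\mathcal{O}_{v}/p$ is semiperfect and $v(\pi)\le v(p)$ can be arranged.
\end{itemize}
By Łoś, $\mathcal{O}_{v^{\ult}}/\pi^{\flat\natural}\cong\prod_{n}R_{n}/\ult$. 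Then $\alpha$ factors as the diagonal embedding $R\to R^{\ult}$ followed by the isomorphism $\prod_{n}\sigma_{n}/\ult:R^{\ult}\to\prod_{n}R_{n}/\ult$. The diagonal embedding is elementary by Łoś, so $\alpha$ is elementary.

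For \textbf{(ii)}, I will show that the target of $\bar{\iota}$ is canonically isomorphic to the target of $\alpha$, compatibly with the maps. Since $K$ has rank $1$, $v^{\ult}(p)$ lies in the convex subgroup generated by $v^{\ult}(\pi)$, so $w(p)>0$ and $k_{w}$ has characteristic $p$. Also $\mathfrak{m}_{w}$ consists of elements of value exceeding every $n\,v^{\ult}(\pi)$, hence $\mathfrak{m}_{w}\subseteq\pi^{\flat\natural}\mathcal{O}_{v^{\ult}}$. Moreover $\mathcal{O}_{\bar{v}^{\ult}}=\mathcal{O}_{v^{\ult}}/\mathfrak{m}_{w}$. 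It follows that reduction modulo $\mathfrak{m}_{w}$ gives an isomorphism $\mathcal{O}_{v^{\ult}}/\pi^{\flat\natural}\to\mathcal{O}_{\bar{v}^{\ult}}/\res_{w}\pi^{\flat\natural}$, and this isomorphism composed with $\alpha$ is $\bar{\iota}$. Hence (ii) follows from (i).

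For \textbf{(iii)}, I identify $K^{\flat}$ with its image under the embedding $\iota$ of the preceding Lemma. I then apply Theorem~\ref{thm:AKE-elementary_substructure} to $(K^{\flat},v^{\flat},\pi^{\flat})\subseteq(k_{w},\bar{v}^{\ult},\iota\pi^{\flat})$, in its ``moreover'' form, checking each hypothesis in turn.
\begin{itemize}
\item \emph{Residue rings.} The residue-ring condition is exactly (ii).
\item \emph{Value groups.} $v^{\flat}K^{\flat}\cong vK$ is a dense subgroup of $\RR$, hence regularly dense. The value group of $\bar{v}^{\ult}$ is the convex hull of $v^{\ult}(\pi)$ in $\Gamma^{\ult}$. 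This hull is archimedean modulo infinitesimals and $p$-divisible, and I will verify regular density directly from that of $\Gamma^{\ult}\equiv vK$.
\item \emph{Almost tameness of $K^{\flat}$.} $K^{\flat}$ is perfectoid, so it is almost tame by the Remark following Proposition~\ref{prp:elementary}.
\item \emph{Almost tameness of $k_{w}$.} $(K^{\ult},v^{\ult},\pi)$ lies in $\CAT_{p}$ by that Remark together with Proposition~\ref{prp:elementary}. For $(k_{w},\bar{v}^{\ult},\res_{w}\pi^{\flat\natural})$, henselianity passes to residue fields of coarsenings. Semiperfectness of $\mathcal{O}_{\bar{v}^{\ult}}/p$ is a quotient of semiperfectness of $\mathcal{O}_{v^{\ult}}/p$.
\end{itemize}

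I expect the main obstacle to be algebraic maximality of $\mathcal{O}_{\bar{v}^{\ult}}[\pi^{-1}]$ on $k_{w}$. Lemma~4.2.5 handles only coarsenings with $w(\pi)=0$, whereas here $w(\pi)>0$. I would first use the Fact on the standard decomposition: since $K^{\ult}$ is $\aleph_{1}$-saturated, the rank-$1$ valuation induced on $k_{w}=K^{\ult}(v^{\ult})_{\pi+}$ is complete and maximal. The valuation ring $\mathcal{O}_{\bar{v}^{\ult}}[\pi^{-1}]$ is precisely this rank-$1$ piece, so it is maximal and a fortiori algebraically maximal. If this identification fails, the fallback is to verify the different-criterion axioms from the proof of Proposition~\ref{prp:elementary} for $k_{w}$, transferring them from $K^{\ult}$ via (ii).
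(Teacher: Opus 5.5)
Your (i) and (ii) follow the paper's argument. Your $\prod_n\sigma_n/\ult$ after the diagonal is the paper's $\bar{\Phi}_\infty^{-1}\circ\Delta$, and (ii) is the same reduction modulo $\mathfrak{m}_w$. (Incidentally, $v(\pi)\le v(p)$ cannot be ``arranged'', since $\pi$ is given, but you do not need it: $\sigma_n$ is an isomorphism for the cofinitely many $n$ with $v(\pi)/p^n\le v(p)$, and that suffices because $\ult$ is nonprincipal.)

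The paper's proof of (iii) simply invokes Theorem~\ref{thm:AKE-elementary_substructure}. You rightly try to check that $(k_w,\bar v^{\ult},\iota\pi^{\flat})$ is almost tame, but your algebraic-maximality step fails, for two reasons.

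\emph{First, $w$ is misidentified.} The coarsest coarsening with $w(\pi)>0$ corresponds to the \emph{largest} convex subgroup $\Delta_{\pi-}$ of $\Gamma^{\ult}$ not containing $v^{\ult}(\pi)$, namely the $\gamma$ with $m|\gamma|<v(\pi)$ for all $m\in\NN$. So $\mathfrak{m}_w=\{x\mid m\,v^{\ult}(x)\geq v(\pi)\text{ for some }m\in\NN\}$, the value group of $\bar v^{\ult}$ is $\Delta_{\pi-}$, and $k_w=K^{\ult}(v^{\ult})_{\pi-}$. What you describe (values beyond every $n\,v^{\ult}(\pi)$, value group the convex hull of $v^{\ult}(\pi)$, $k_w=K^{\ult}(v^{\ult})_{\pi+}$) is $v^{\ult}_{\pi+}$. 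Its residue field has characteristic $0$, which contradicts your own correct remark that $w(p)>0$ and leaves no room for $K^{\flat}$. The slip is harmless in (ii), since $\mathfrak{m}_w\subseteq\pi^{\flat\natural}\mathcal{O}_{v^{\ult}}$ still holds. It is also harmless for regular density, which passes to every convex subgroup of the regularly dense group $\Gamma^{\ult}$.

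\emph{Second, and independently, $\mathcal{O}_u[\varpi^{-1}]$ is not the rank-$1$ piece.} For a pointed valued field $(F,u,\varpi)$, the ring $\mathcal{O}_u[\varpi^{-1}]$ is the valuation ring of the coarsening $u_{\varpi+}$ on $F$ itself. The rank-$1$ valuation of the standard decomposition lives instead on the residue field $Fu_{\varpi+}$. Maximality of the latter says nothing about algebraic maximality of the former. Note also that $\res_w\pi=0$ in $k_w$, so the relevant element is $\varpi=\iota\pi^{\flat}=\res_w\pi^{\flat\natural}$. Your fallback (transferring the different criterion ``via (ii)'') is not an argument either: (ii) only controls the residue ring modulo $\varpi$, not the finite extensions of $k_w$.

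\emph{The fix is to change the distinguished element upstairs from $\pi$ to $\pi^{\flat\natural}$.}
\begin{enumerate}
\item Each $(K,v,\pi_n)$, with $\pi_n=\sharp_n(\pi^{\flat})$, is a pointed perfectoid field, hence lies in $\CAT_p$.
\item Since $\CAT_p$ is elementary (Proposition~\ref{prp:elementary}), Łoś gives $(K^{\ult},v^{\ult},\pi^{\flat\natural})=\prod_n(K,v,\pi_n)/\ult\in\CAT_p$. That is, $\mathcal{O}_{v^{\ult}}[(\pi^{\flat\natural})^{-1}]$ is algebraically maximal.
\item Now $v^{\ult}(\pi^{\flat\natural})=\ulim_{n\rightarrow\ult}v(\pi)/p^n$ is infinitesimal relative to $v(\pi)$, so it lies in $\Delta_{\pi-}$ and $w(\pi^{\flat\natural})=0$.
\item Lemma~4.2.5 of Jahnke--Kartas therefore applies after all; the obstruction $w(\pi)>0$ that you noticed concerns only $\pi$. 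It gives $(k_w,\bar v^{\ult},\res_w\pi^{\flat\natural})\in\CAT_p$.
\end{enumerate}
Combined with your correct checks of henselianity, semiperfectness and regular density, Theorem~\ref{thm:AKE-elementary_substructure} then yields (iii) from (ii).
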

\begin{proof}
	\
	\begin{enumerate}[{\bf(i)}]
		\item
			By hypothesis the Frobenius map $x\mapsto x^{p}$ induces a surjection
			$$\Phi:\mathcal{O}_{v}/\pi\rightarrow\mathcal{O}_{v}/\pi.$$
			For each $i$ write $\Phi^{i}=\Phi\circ\ldots\circ\Phi$ for the $i$-fold iterate of $\Phi$,
			which is also surjective and has kernel $(\pi^{\flat}_{(n)})\unlhd\mathcal{O}_{v}$.
			Then the ultralimit
			$$\Phi_{\infty}=\ulim_{i\rightarrow\ult}\Phi_{i}:\mathcal{O}_{v^{\ult}}/\pi\rightarrow\mathcal{O}_{v^{\ult}}/\pi$$
			is a surjection 
			with kernel
			$(\pi^{\flat\natural})\unlhd\mathcal{O}_{v^{\ult}}$.
			As usual, we get an induced isomorphism
			$$\bar{\Phi}_{\infty}:\mathcal{O}_{v^{\ult}}/\pi^{\flat\natural}\rightarrow\mathcal{O}_{v^{\ult}}/\pi.$$
			Finally
			\begin{align*}
			\xymatrix@=2.0em{
				\mathcal{O}_{v^{\flat}}/\pi^{\flat}
				\ar@{->}[r]^{=}
			&
				\mathcal{O}_{v}/\pi
			\ar@{->}[r]^{\preceq}_{\Delta}
			&
				\mathcal{O}_{v^{\ult}}/\pi
			\ar@{->}[r]^{\sim}_{\bar{\Phi}_{\infty}^{-1}}
			&
				\mathcal{O}_{v^{\ult}}/\pi^{\flat\natural}
			}
			\end{align*}
			Here the first map is an isomorphism from the definition of tilting, and so is arguably an equality,
			the second map is the diagonal embedding, which is an elementary embedding by Łoś's Theorem,
			and the third map is the inverse of $\bar{\Phi}_{\infty}$.
			Writing $\alpha=\bar{\Phi}_{\infty}^{-1}\circ\Delta$ we have the required elementary embedding.
		\item
			The desired map
			$\bar{\iota}$
			is also the composition
			\begin{align*}
				\mathcal{O}_{v^{\flat}}/\pi^{\flat}
				\overset{\alpha}{\longrightarrow}
				\mathcal{O}_{v^{\ult}}/\pi^{\flat\natural}
				\longrightarrow
				\mathcal{O}_{\bar{v}^{\ult}}/\res_{w}\pi^{\flat\natural}
			\end{align*}	
			where the second map is the canonical isomorphism given by quotienting by the ideal $\mathfrak{m}_{w}$.
			Observe also that $\bar{\iota}$ is induced by $\iota$.
		\item
			We apply Theorem~\ref{thm:AKE-elementary_substructure}:
            since,
			both $\Gamma_{v^{\flat}}$ and $\Gamma_{\bar{v}^{\ult}}$ are regularly dense
			and since
			$\bar{\iota}:\mathcal{O}_{v^{\flat}}/\pi^{\flat}\overset{\preceq}{\longrightarrow}\mathcal{O}_{\bar{v}^{\ult}}/\res_{w}\pi^{\flat\natural}$
			provides the elementary embedding,
            we can conclude.
			\qedhere
	\end{enumerate}
\end{proof}

\begin{theorem}[{Second Main Theorem, \cite[Theorem 6.2.3]{JahnkeKartas}}]\label{thm:second}
	Let $(K,v,\pi)$ be a pointed perfectoid field
	and
	let $\ult$ be a nonprincipal ultrafilter on $\NN$
	and let $(K^{\ult},v^{\ult})$ be the corresponding ultrapower.
	Let $w$ be the coarsest coarsening of $v^{\ult}$ such that $w(\pi)>0$.
	Then:
	\begin{enumerate}[{\bf(i)}]
 		\item
			Every finite extension of $(K^{\ult},w)$ is unramified.
		\item
			The tilt $(K^{\flat},v^{\flat})$ elementarily embeds into $(k_{w},\bar{v})$.
		\item
			The equivalence of categories
			$K^{\ult}\fet\cong k_{w}\fet$
			restricts to
			$K\fet\cong K^{\flat}\fet$,
            where $F\fet$ denotes the category of finite étale extensions of $F$.
 	\end{enumerate}
\end{theorem}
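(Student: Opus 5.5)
The plan is to prove the three items in the order (ii), (i), (iii). Item (ii) is essentially Theorem~\ref{thm:first}, and (iii) is deduced from (i) and (ii) by henselian Galois theory of the coarsening $w$.

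\textbf{Step 1: item (ii).} I would first observe that $w$ does not depend on the choice of $\pi$. Since $v$ has rank $1$, any two nonzero elements of $\mathfrak{m}_{v}$ have values that bound each other up to an integer multiple. Hence the largest convex subgroup of $\Gamma_{v^{\ult}}$ not containing $v(\pi)$ is the same for every $\pi\in\mathfrak{m}_{v}\setminus\{0\}$. Choose $x\in K^{\flat}$ with $0<v^{\flat}(x)$ and set $\pi':=x^{\sharp}$. Then $\pi'$ has a compatible system of $p^{n}$-th roots, namely the $\sharp_{n}(x)$, so it is a distinguished uniformizer. Replacing $\pi$ by $\pi'$ leaves $w$ unchanged. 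Theorem~\ref{thm:first}(iii) then gives that $\iota\colon(K^{\flat},v^{\flat})\to(k_{w},\bar{v}^{\ult})$ is $\Lval$-elementary.

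\textbf{Step 2: item (i).} I would show that every finite extension $L'$ of $(K^{\ult},w)$ has $e=1$, $d=0$, and separable residue extension. This comes in three parts.
\begin{enumerate}
\item $\Gamma_{w}$ is divisible. Here $\Gamma_{w}=\Gamma_{v}^{\ult}/\Delta$, where $\Delta$ is the group of elements infinitesimal relative to $v(\pi)$. Since $\Gamma_{v}$ is dense archimedean, for each $\gamma$, each $n$ and each $m$ there is $\delta$ with $|n\delta-\gamma|<v(\pi)/m$. By Łoś and $\aleph_{1}$-saturation of the ultrapower one gets $\delta$ with $n\delta-\gamma\in\Delta$.
\item $k_{w}$ is perfect. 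This is because $\Ring{v^{\ult}}/\pi$ is semiperfect by Łoś, and semiperfectness passes to the quotient $k_{w}$.
\item $(K^{\ult},w)$ is defectless. This is the real content, and the step I expect to be the main obstacle. My approach is via differents. For a finite extension generated by $a$, the different $\delta(a)$ measures ramification and defect together. An extension of the henselian field $(K^{\ult},w)$ is unramified exactly when it is generated by some $a$ with $w(\delta(a))=0$, i.e.\ $v^{\ult}(\delta(a))\in\Delta$. The perfectoid field $K$ is deeply ramified. I would use this in a uniform form: for each $n$ and $m$, every extension of $K$ of degree $\le n$ is generated by some $a$ with $v(\delta(a))<v(\pi)/m$. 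This is a first-order sentence, in the spirit of the axiomatization in the proof of Proposition~\ref{prp:elementary}.
\end{enumerate}
Given that uniform statement, Łoś transfers it to $K^{\ult}$ for every $m$. Then $\aleph_{1}$-saturation, applied to a degree-$n$ extension $L'=K^{\ult}(a)$ with parameters, produces a generator whose different is infinitesimal, which is what is needed. The delicate point is the uniformity in $n$ for fixed $m$. I would first try to extract it from the Kuhlmann--Rzepka (or Gabber--Ramero) characterization of deeply ramified fields by vanishing of $\Omega_{\Ring{L}/\Ring{K}}$. Failing that, I would try a direct Frobenius argument: substitute $X=Y^{p^{k}}$ and use semiperfectness of $\Ring{v}/p$ to shrink the different by a factor $p^{-k}$.

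\textbf{Step 3: item (iii).} By (i) and henselianity of $w$, which is a coarsening of the henselian $v^{\ult}$, passing to residue fields gives an equivalence $K^{\ult}\fet\cong k_{w}\fet$.
\begin{enumerate}
\item Base change $L\mapsto L\otimes_{K}K^{\ult}$ embeds $K\fet$ into $K^{\ult}\fet$. Fields stay fields here because $K\preceq K^{\ult}$, so $K^{\ult}/K$ is regular.
\item Base change along $\iota$ embeds $K^{\flat}\fet$ into $k_{w}\fet$. Fields again stay fields, since $\iota$ is elementary by (ii).
\item The two essential images coincide: one direction is the forward map below, the other is the descent below.
\end{enumerate}

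For the forward map, take a finite field extension $L/K$ of degree $n$. It is again perfectoid, so Steps 1 and 2 apply to $L$ compatibly with $K$, since $\natural_{L}$ extends $\natural_{K}$. The residue field $k_{w_{L}}$ of $L^{\ult}$ then has degree $n$ over $k_{w}$ by (i), and contains $\iota_{L}(L^{\flat})$. Linear disjointness of $k_{w}$ and $\iota(L^{\flat})$ over $\iota(K^{\flat})$ comes from elementarity. I would use it to conclude both that $[L^{\flat}:K^{\flat}]=n$ and that $k_{w_{L}}=k_{w}\otimes_{K^{\flat}}L^{\flat}$.

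For the descent, take $M=K^{\flat}(\alpha)$ of degree $n$ with minimal polynomial $f$. Write the coefficients of $f$ as $(c_{i})$ with $c_{i}\in K^{\flat}$, and form the polynomials $f^{\sharp_{m}}$ over $K$ obtained by applying $\sharp_{m}$ to each coefficient. Their ultralimit is $f^{\natural}$, whose reduction is $\iota f$. Let $L_{m}/K$ be the extension generated by a root of $f^{\sharp_{m}}$. The ultraproduct of the $L_{m}$ then realizes the unramified lift of $k_{w}\otimes M$. By Łoś, together with Krasner's lemma applied to the tilted polynomials, one gets $L_{m}^{\flat}\cong M$ for $\ult$-most $m$. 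Functoriality on morphisms then follows from full faithfulness of the two base-change embeddings.
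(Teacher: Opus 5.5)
Your treatment of (ii) matches the paper: it is read off from Theorem~\ref{thm:first}(iii). Your remark that $w$ does not depend on the choice of $\pi\in\mathfrak{m}_v\setminus\{0\}$, so that one may pass to the distinguished uniformizer $x^\sharp$, is a detail the paper leaves implicit.

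For (i) the paper argues differently. There $\Gamma_w$ is divisible and $w$ is algebraically maximal, by the almost-tame axiom transferred to $K^{\ult}$ via Proposition~\ref{prp:elementary}. So $(K^{\ult},w)$ is tame with divisible value group, and every finite extension is unramified. Your different-based argument is a legitimate alternative that bypasses Kuhlmann's characterization of tame fields. It needs one input: every finite extension of a perfectoid field has integral generators of arbitrarily small different. This is the same kind of condition the axiomatization in Proposition~\ref{prp:elementary} encodes.

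Your worry about uniformity in $n$ is unfounded. Each $\sigma_{n,m}$ is a single sentence about $K$, checked extension by extension. The only uniformity needed, in $m$, is supplied in $K^{\ult}$ by saturation. In characteristic $p$ the input is immediate, since $a\mapsto a^{1/p^k}$ divides $v(\delta(a))$ by $p^k$. In mixed characteristic it is genuine deep-ramification content, and your Frobenius trick only works modulo $p$.

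The genuine gap is in (iii), which the paper does not prove. Your forward step fails as stated.
\begin{itemize}
\item Regularity of $k_w/\iota(K^\flat)$ gives linear disjointness from $\iota_L(L^\flat)$ only once $L^\flat/K^\flat$ is known to be algebraic, which you do not show.
\item Even then it yields only $[k_w\cdot\iota_L(L^\flat):k_w]=[L^\flat:K^\flat]\le n=[k_{w_L}:k_w]$.
\item Nothing in the elementarity of $\iota_K,\iota_L$ forces $k_w\cdot\iota_L(L^\flat)=k_{w_L}$. That equality is exactly $[L^\flat:K^\flat]=[L:K]$, the core of Fontaine--Wintenberger, so it needs an extra argument.
\end{itemize}

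One way to supply it is to work with $L/K$ Galois with group $G$, which suffices in the language of finite $G$-sets.
\begin{itemize}
\item $G$ acts on $L^\flat=\varprojlim_{x\mapsto x^p}L$ with fixed field $\varprojlim K=K^\flat$.
\item The action is faithful. An automorphism fixing $L^\flat$ fixes $\sharp(\mathcal{O}_{L^\flat})$. Since $\mathcal{O}_L=\sharp(\mathcal{O}_{L^\flat})+p\mathcal{O}_L$ and $L$ is complete, this set topologically generates $\mathcal{O}_L$.
\item Artin's theorem then gives $[L^\flat:K^\flat]=|G|$.
\item $G$-equivariance of $\iota_L$ together with your linear disjointness gives $k_{w_L}=k_w\otimes_{K^\flat}L^\flat$.
\end{itemize}

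Your descent step is also too vague. An ultraproduct of varying $L_m$ lies in the essential image of $K\fet$ only if the $L_m$ stabilize. Krasner's lemma should be applied between consecutive levels. Take $f$ monic over $\mathcal{O}_{K^\flat}$. Then $f^{\sharp_m}(X^p)\equiv f^{\sharp_{m+1}}(X)^p \bmod p$, and $v(\mathrm{disc}\,f^{\sharp_m})=v^\flat(\mathrm{disc}\,f)/p^m$ once this is below $v(p)$. So for large $m$, sending a root $\beta$ of $f^{\sharp_{m+1}}$ to the root of $f^{\sharp_m}$ nearest $\beta^p$ is a $G_K$-equivariant bijection; injectivity holds because $v(\beta-\beta')\le v(\mathrm{disc}\,f^{\sharp_{m+1}})$ is small. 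Hence the algebras $K[X]/(f^{\sharp_m})$ are eventually all isomorphic to some $A$. Then $A\otimes_K K^{\ult}\cong K^{\ult}[X]/(f^\natural)$ is the unramified lift of $k_w\otimes_{K^\flat}M$. This is what you need; the assertion $L_m^\flat\cong M$ is not what Krasner's lemma directly gives.
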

\begin{proof}
	The first point, {\bf(i)}, is the ``Non-standard Tate/Gabber--Ramero'' Theorem:
    it follows from the fact that the value group $wK^{\ult}$ is divisible, and that $w$ itself is algebraically maximal by Proposition~\ref{prp:elementary}.
    Point
	{\bf(ii)} comes from point {\bf(iii)} of Theorem~\ref{thm:first},
    which proves that $\iota$ is elementary.
    % Finally, to prove {\bf(iii)} we observe that $(K^{\ult},w)$
\end{proof}

% \subsection{Quick corollaries}
% --- tameable
% $\Cperf$
% $\Tperf$
% --- axiomatizing algebraic maximality upstairs
% $(K,v,t)$ - alg max of $v_{t+}$
% --- tilting
% --- untilting and the Fargues--Fontaine curve $C_{\mathcal{F}}$
% --- AKE for perfectoids
% --- Almost purity and Fontaine--Wintenberger
% \cite{FontaineWintenberger}

\section{The role of deeply ramified fields}

\begin{definition}
	A valued field $(K,v)$ is
	\define{deeply ramified}
	if the module of Kähler differentials 
	$\Omega_{K^{\sep}/K}$
    is trivial.
\end{definition}

A valued field $(K,v)$ of 
residue characteristic of $(K,v)$ is $p>0$,
is deeply ramified if and only if
it is dense in its perfect hull.
In turn, this holds 
if and only if
the homomorphism
\begin{align*}
	\mathcal{O}_{\hat{v}}/p&\rightarrow\mathcal{O}_{\hat{v}}/p\\
	x&\mapsto x^{p}
\end{align*}
is surjective, where $\mathcal{O}_{\hat{v}}$ is the valuation ring of the completion $(\hat{K},\hat{v})$ of $(K,v)$,
see \cite[Theorem1.2]{KuhlmannRzepka}.
Kuhlmann and Rzepka also give the narrower definition of a {\em semitame} valued field.
A semitame valued field is deeply ramified, it has residue characteristic exponent $p$, and its value group is $p$-divisible.

\begin{theorem}[{cf~\cite[Theorem 1.2]{KuhlmannRzepka}}]
			Let $(K,v)$ be a nontrivially valued field of residue characteristic $p>0$.
	\begin{enumerate}[{\bf(i)}]
		\item
The following logical relations hold between the properties of $(K,v)$:
	\begin{align*}
		\text{\rm tame}
		\implies
		\text{\rm separably tame}
		\implies
		\text{\rm semitame}
		\implies
		\text{\rm deeply ramified}.
		% \implies
		% \text{\rm rdr field}.
	\end{align*}
\item
If $(K,v)$ is of rank $1$,
then it is semitame if and only if it is deeply ramified.
\item
	If the characteristic of $K$ is $p$,
    then the following are equivalent:
	\begin{enumerate}[{\bf(a)}]
		\item
		$(K,v)$
		is semitame,
		\item
		$(K,v)$
		is deeply ramified,
		\item
		the completion of $(K,v)$ is perfect,
		\item
		$(K,v)$
		is dense in its perfect hull,
		\item
		$K^{(p)}$ is dense in $(K,v)$.
	\end{enumerate}
\item
If $K$ is perfect and of positive characteristic then it is semitame.
	\end{enumerate}
\end{theorem}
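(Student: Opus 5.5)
The plan is to work through the four parts in reverse dependency order. I would first establish (iii) and (iv), which are statements about positive characteristic, then (ii), and finally the chain of implications in (i). Throughout I will use the criterion recalled just before the statement: a valued field of residue characteristic $p$ is deeply ramified if and only if Frobenius is surjective on $\Ring{\hat v}/p$, equivalently if and only if $(K,v)$ is dense in its perfect hull.

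\textbf{Part (iii).} Assume $\mathrm{char}\,K=p$; I would prove the cycle (d)$\Leftrightarrow$(e)$\Rightarrow$(c)$\Rightarrow$(b)$\Rightarrow$(a).
\begin{itemize}
\item (d)$\Leftrightarrow$(e): apply the Frobenius isomorphism $K^{1/p}\to K$, which rescales the valuation by $p$. This shows that $K$ is dense in $K^{1/p}$ if and only if $K^{(p)}$ is dense in $K$. Iterating gives density in $K^{\perf}$.
\item (e)$\Rightarrow$(c): every element of $\hat K$ is then a limit of $p$-th powers $a_n^p$. Since $(a_n^p)$ is Cauchy, so is $(a_n)$, and its limit is a $p$-th root in $\hat K$.
\item (c)$\Rightarrow$(b): if $\hat K$ is perfect, then $\Ring{\hat v}$ is perfect, so Frobenius is surjective modulo $p$ (indeed modulo $0$). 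Hence $K$ is deeply ramified.
\item (b)$\Rightarrow$(a): perfectness of $\hat K$ forces $vK=v\hat K$ to be $p$-divisible and the residue field to be perfect. Since the residue characteristic exponent is $p$, this is exactly semitame.
\end{itemize}
Part (iv) is then immediate: $K$ perfect gives (e) trivially.

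\textbf{Part (ii).} In rank $1$, deeply ramified means Frobenius is surjective on $\Ring{\hat v}/p$. Taking $x\in\Ring{\hat v}$ with $0<v(x)<v(p)$, the relation $x\equiv y^p$ forces $v(x)=pv(y)$, so $v\hat K=vK$ is $p$-divisible. Here rank $1$ matters: it makes the interval $(0,v(p))$ cofinal enough that $p$-divisibility of the whole group follows from the small values. Applying the same argument to units gives a perfect residue field. The converse is the definition together with semitame$\Rightarrow$deeply ramified.

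\textbf{Part (i).} Tame$\Rightarrow$separably tame is trivial. Semitame$\Rightarrow$deeply ramified is part of the definition. The substantive step is separably tame$\Rightarrow$deeply ramified (and hence semitame, given $p$-divisibility and perfect residue field). My approach is to show Frobenius is surjective on $\Ring{v}/p$, which passes to the completion. Take $a\in\Ring{v}$ and consider the Artin--Schreier-type polynomial $X^p-cX-a$, with $c$ of small positive value; in mixed characteristic the analogous device is $X^p-pcX-a$. This polynomial is separable. Its roots generate an extension which, by separable defectlessness together with $p$-divisibility of $vK$ and perfectness of $Kv$, must be tame and hence contribute no ramification of degree $p$. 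From this I would extract a root $b\in K$ up to a small error, giving $a\equiv b^p$ modulo $p$ after adjusting.

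\textbf{Main obstacle.} I expect the hard step to be this last one: exploiting separable defectlessness to obtain approximate $p$-th roots. I would first try the ramification-theoretic route. Separably tame fields are exactly those whose separable closure is contained in the absolute ramification field (Kuhlmann--Pal). Since the extension generated by a root of $X^p-cX-a$ is therefore tamely ramified, its different has small value, and that different bound yields the required congruence.
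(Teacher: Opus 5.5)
\textbf{There are genuine gaps, in (ii) and in the key step of (i).}

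\textbf{The criterion and part (ii).} Your starting criterion is only correct in characteristic $p$. As you use it, it makes (ii) false. In mixed characteristic every field is perfect, so ``dense in its perfect hull'' is vacuous. Surjectivity of Frobenius on $\Ring{\hat v}/p$ alone does not give deep ramification either: $\QQ_{p}$ has $\ZZ_{p}/p=\FF_{p}$, so it passes your test, yet it is neither deeply ramified nor semitame. (The text preceding the statement states this criterion loosely.) The characterization used by Kuhlmann--Rzepka, going back to Gabber--Ramero, has a second condition: for convex subgroups $\Gamma_{1}\subsetneq\Gamma_{2}$ of $vK$, the quotient $\Gamma_{2}/\Gamma_{1}$ is never discrete. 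Semitame means the Frobenius condition together with $p$-divisibility of $vK$.

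Your rank-$1$ argument needs an $x$ with $0<v(x)<v(p)$. Such an $x$ fails to exist exactly when $v(p)$ is the least positive element of $vK$, and this is where non-discreteness must enter. In rank $1$ it makes $vK$ dense in $\RR$, which supplies such an $x$. Then any $\delta>0$ can be written $\delta=n\epsilon+r$ with $\epsilon,r\in vK\cap[0,v(p))$, each of which is $p$-divisible.

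In (iii) the same condition is needed for (c)$\Rightarrow$(b); there it follows from $p$-divisibility of $v\hat K=vK$. You must also close your cycle:
\begin{itemize}
\item (a)$\Rightarrow$(c) holds because in characteristic $p$ the Frobenius condition says exactly that $\Ring{\hat v}$ is perfect;
\item (c)$\Rightarrow$(d) holds because $K^{\perf}$ then embeds in $\hat K$.
\end{itemize}

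\textbf{The implication separably tame $\Rightarrow$ semitame in (i).} This is the substantive implication of (i), and it is not actually established.

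First, the target is wrong in characteristic $p$. There, surjectivity of Frobenius on $\Ring{v}/p=\Ring{v}$ says that $K$ is perfect, which fails for separably tame fields. For example, take $\FF_{p}(t)^{\mathrm{sep}}$ with any extension of $v_{t}$: it is separably closed, with value group $\QQ$ and residue field $\overline{\FF}_{p}$, but $t$ has no $p$-th root. What is needed is density of $K^{(p)}$ in $K$, i.e.\ (e).

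More importantly, ``the extension is tame, hence no ramification of degree $p$'' does not produce approximate $p$-th roots in $K$; an unramified extension of degree $p$ is perfectly tame. The missing idea is \emph{best approximation}:
\begin{itemize}
\item A finite separable extension $K(\vartheta)/K$ of the henselian field $K$ is defectless by hypothesis.
\item Hence it has a valuation basis containing $1$, so $v(\vartheta-K)$ attains a maximum at some $c_{0}\in K$.
\item $p$-divisibility of $vK$ and perfectness of $Kv$ then force this maximum to be large.
\end{itemize}

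Concretely, in characteristic $p$, take $a\in\Ring{v}$, $c\in K$ with $v(c)$ \emph{large} (not small), and $\vartheta$ a root of $X^{p}-cX-a$. Then $z=\vartheta-c_{0}$ satisfies $z^{p}-cz=a'\in K$. Suppose $(p-1)v(z)<v(c)$. Then $pv(z)=v(a')$, so $v(z)=v(d)$ for some $d\in K$, and $\overline{z/d}^{\,p}=\overline{a'/d^{p}}\in Kv$. Hence $\overline{z/d}\in Kv$, contradicting maximality. So $v(\vartheta-c_{0})\geq v(c)/(p-1)$. Since $(\vartheta-a^{1/p})^{p}=c\vartheta$ and $\vartheta$ is integral, you get $v(a-c_{0}^{p})\geq v(c)$, which gives (e).

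In mixed characteristic no auxiliary $c$ is needed. Run the same argument on a root $\eta$ of $X^{p}-a$, using $(\eta-c_{0})^{p}\equiv a-c_{0}^{p}\bmod p$, to get $v(a-c_{0}^{p})\geq v(p)$.

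Your fallback via $K^{\mathrm{sep}}=K^{r}$ and differents has the same hole. A bound on the different of $K(\vartheta)/K$ says nothing about $v(\vartheta-K)$ until it is linked to best approximations in this way.

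The remaining steps are fine: (d)$\Leftrightarrow$(e)$\Rightarrow$(c), part (iv), and the trivial implications in (i).
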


\section{Building on these foundations}

I am aware of several fronts on which the model theory is advancing
by broadening or varying the ``almost tame'' framework.

% \subsection{Ketelsen's tilting of theories}

Ketelsen has announced
(\cite{Ketelsentalk})
an extension of the tilting functor (staying in the world of fields of mixed characteristic) from perfectoid fields to any model of the theory $\TST$ of semitame fields of mixed characteristic.
At the time of writing I understand this will appear in Ketelsen's forthcoming PhD thesis \parencite{Ketelsenthesis}.

\begin{definition}
	The class of semitame valued fields of mixed characteristic $(0,p)$,
    denoted by
    $\CST_{(0,p)}$
    is the class of valued fields
    $(K,v)$ that satisfy
	\begin{enumerate}
		\item
		the valued field $(K,v)$ is henselian of mixed characteristic $(0,p)$,
		\item
		the value group $vK$ is $p$-divisible, and
		\item
		the ring $\Ring{v}/p$ is semiperfect.
	\end{enumerate}
\end{definition}

For any ``sufficiently%
\footnote{Here $\aleph_{1}$-saturated suffices.}
saturated''
model
$(K,v)\in\CST_{(0,p)}$
we may define
$(K^{\flat},v^{\flat})$
to be
\begin{align*}
	\Hs{(Kv_{p+})^{\flat}}{\Gamma},
\end{align*}
with valuation $\tilt{v}$ given by the composition
$\tilt{(v_{p-})}\circ v_{t}$.
Ketelsen showed that given any $(K,v),(L,w)\models\TST_{(0,p)}$
we have the implication
\begin{align*}
(K,v)\equiv(L,w)&\implies(\tilt{K},\tilt{v})\equiv(\tilt{L},\tilt{w}),
\end{align*}
which is a form of \AKEtext.
Rephrased in the framework of \cite{AF_fragments},
that tilting is an interpretation
{\em for sentences}
of the theory of almost tame valued fields of positive characteristic in the theory of semitame valued fields of mixed characteristic.

\begin{theorem}[{cf \cite{Ketelsentalk}}]
This yields a well-defined map
$T\mapsto\tilt{T}$
from theories extending $\TAT_{(0,p)}$
to theories extending $\TAT_{(p,p)}$,
such that $(K,v)\models T\implies(K,v)^{\flat}\models T^{\flat}$.
\end{theorem}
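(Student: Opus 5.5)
The plan is to construct $T^{\flat}$ sentence by sentence, with the AKE principles for almost tame fields doing most of the work. For a theory $T\supseteq\TAT_{(0,p)}$, define
\[
T^{\flat}:=\{\varphi \mid \varphi \text{ an } \Lval\text{-sentence with } (L,w)^{\flat}\models\varphi \text{ for every sufficiently saturated } (L,w)\models T\}.
\]
The tilt of a sufficiently saturated $(L,w)$ is the Hahn-series construction $\Hs{(Lw_{p+})^{\flat}}{\Gamma}$ described above.

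The theorem then reduces to two claims.
\begin{enumerate}
\item[(a)] $T^{\flat}$ extends $\TAT_{(p,p)}$.
\item[(b)] For every model $(K,v)\models T$ whose tilt is defined, $(K,v)^{\flat}\models T^{\flat}$. This includes the non-saturated models, in particular perfectoid fields with their classical tilt.
\end{enumerate}
Well-definedness of the map $T\mapsto T^{\flat}$ is automatic from the definition.

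For (a), check the axioms of Definition~\ref{def:almosttame} on $\Hs{(Lw_{p+})^{\flat}}{\Gamma}$ with the composed valuation $\tilt{(w_{p-})}\circ v_{t}$, pointed at $\pi^{\flat}$:
\begin{itemize}
\item The field is henselian, since it is a composition of a Hahn field with the tilt of a complete rank-one field (Lemma~\ref{lem:Hensel}).
\item $\Ring{}/p$ is semiperfect, since the field is perfect: the Hahn field has a perfect residue field and $\Gamma$ is $p$-divisible.
\item The coarsening $\Ring{}[\pi^{-1}]$ is the $v_{t}$-valuation ring of a Hahn field, which is maximal, hence algebraically maximal.
\end{itemize}
So the tilt lies in $\CAT_{p}$, using Proposition~\ref{prp:elementary}.

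The key step is the invariance statement: if $(K,v)\equiv(L,w)$ are saturated models of $T$, then $K^{\flat}\equiv L^{\flat}$. This is the Ketelsen implication quoted above, and I would prove it via Theorem~\ref{thm:AKE-elementary_equivalence}. That theorem asks for $\Ring{}/\pi^{\flat}$ and $(\Gamma,v(\pi^{\flat}))$ to be elementarily equivalent on the two sides.
\begin{itemize}
\item For the residue rings, use $\Ring{K^{\flat}}/\pi^{\flat}\cong\Ring{v}/p$. This follows from the inverse-limit description together with the ultrapower isomorphism $\bar{\Phi}_{\infty}$ from the proof of Theorem~\ref{thm:first}(i), applied at the saturated level. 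Since $\Ring{v}/p$ is interpretable in $(K,v)$, we get $\Ring{v}/p\equiv\Ring{w}/p$.
\item For the value groups, $(\Gamma_{v},v(p))$ is likewise interpretable, and the tilted value group with $v(\pi^{\flat})$ is identified with it.
\end{itemize}
So if $K\equiv L$, both hypotheses of Theorem~\ref{thm:AKE-elementary_equivalence} hold and $K^{\flat}\equiv L^{\flat}$.

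For (b), let $(K,v)\models T$ be arbitrary. Pass to an $\aleph_{1}$-saturated ultrapower $K^{\ult}\succeq K$. It is enough to show $K^{\flat}\equiv (K^{\ult})^{\flat}$; then every $\varphi\in T^{\flat}$, true in $(K^{\ult})^{\flat}$, is true in $K^{\flat}$. For perfectoid $K$ this is exactly Theorem~\ref{thm:first}(iii) and Theorem~\ref{thm:second}(ii): $K^{\flat}$ embeds elementarily into $(k_{w},\bar{v})$, which is the rank-one part of the saturated tilt. It then remains to pass from $(k_{w},\bar v)$ to the Hahn construction. This uses Theorem~\ref{thm:AKE-elementary_substructure}, since $v_{t}$-Hahn composition preserves the quotient by $\pi$, and regular density handles the value groups.

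The main obstacle is the identification $\Ring{K^{\flat}}/\pi^{\flat}\cong\Ring{v}/p$ (or an elementary version of it) at the saturated, non-rank-one level. In particular, the tilt must be independent of the chosen saturated model up to elementary equivalence. I would first try reducing to the rank-one residue field $Kv_{p+}$ with its induced valuation and applying the perfectoid identification there, then lift through the Hahn composition.
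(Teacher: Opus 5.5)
The paper proves nothing beyond citing Ketelsen. It takes the implication $(K,v)\equiv(L,w)\Rightarrow(K^{\flat},v^{\flat})\equiv(L^{\flat},w^{\flat})$ for saturated semitame models as given, and reads off $T\mapsto T^{\flat}$: for complete $T$, $T^{\flat}$ is the theory of the tilt of any saturated model. With your intersection definition, well-definedness is indeed trivial. But the invariance is what makes $T^{\flat}$ complete for complete $T$ and gives the tilt of an arbitrary model a meaning, and that is the sense of ``well-defined'' intended. Your skeleton and your check (a) agree with the paper.

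The gap is where you try to supply the invariance via Theorem~\ref{thm:AKE-elementary_equivalence}. Everything rests on $\mathcal{O}_{K^{\flat}}/\pi^{\flat}\cong\mathcal{O}_{v}/p$ and $(v^{\flat}K^{\flat},v^{\flat}(\pi^{\flat}))\equiv(vK,v(p))$ for the saturated tilt, and neither of your suggested routes yields this.
\begin{itemize}
\item The map $\bar{\Phi}_{\infty}$ concerns $\mathcal{O}_{v^{\ult}}/\pi^{\flat\natural}$ inside the Jahnke--Kartas ultrapower, which is a different object.
\item Reducing to the rank-one field $(Kv_{p+},\bar v_{p-})$ loses information. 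In an $\aleph_{1}$-saturated model with $p$-divisible value group, the type $\{0<x\}\cup\{nx<v(p):n\in\NN\}$ is realized, so $\Delta_{p-}\neq0$.
\item Now $\mathcal{O}_{v}/p\cong\mathcal{O}_{\bar v}/p$ for the \emph{full} induced valuation $\bar v$ on $Kv_{p+}$, with value group $\Delta_{p+}$. The perfectoid identification on the rank-one piece only gives $\mathcal{O}_{\bar v_{p-}}/p$. The natural map between the two kills every element of value $v(p)-\delta$ with $0<\delta\in\Delta_{p-}$, and a Hahn layer on top does not change the quotient by $\pi^{\flat}$.
\item Concretely, that construction has value group containing $\Delta_{p+}/\Delta_{p-}\cong\RR$, so $v(\pi^{\flat})$ is $\ell$-divisible for every prime $\ell$. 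Since ``$v(\pi)\in\ell\,vK$'' is expressible in the ring $\mathcal{O}/\pi$, the resulting $\mathcal{O}/\pi^{\flat}$ is not even elementarily equivalent to $\mathcal{O}_{v}/p$ when $(vK,v(p))\equiv(\ZZ[1/p],1)$. This happens, for instance, for saturated extensions of the completion of $\QQ_{p}(p^{1/p^{\infty}})$.
\end{itemize}

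The missing idea is that the tilt must also carry the valuation of $Kv_{p-}$, composed beneath the tilted rank-one valuation. Here $Kv_{p-}$ is the residue field of the rank-one piece, which is exactly the Jahnke--Kartas field $(k_{w},\bar v^{\ult})$; note that $(Kv_{p+})^{\flat}$ also has residue field $Kv_{p-}$. One must then check that $x\mapsto x^{\sharp}\bmod p$ induces $\mathcal{O}_{K^{\flat}}/\pi^{\flat}\cong\mathcal{O}_{v}/p$ for this composite valuation when $v(\pi^{\flat\sharp})=v(p)$. The value groups then agree only up to elementary equivalence: $vK\equiv vK/\Delta_{p+}\times\Delta_{p+}$ lexicographically, for example by the resplendent AKE for $\Hz$ applied to $(K,v_{p+})$ with $\mathcal{O}_{\bar v}$ named on the residue field. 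That is all Theorem~\ref{thm:AKE-elementary_equivalence} needs, and it is the real content of Ketelsen's result.

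The same confusion affects your step (b). $(k_{w},\bar v^{\ult})$ is not the rank-one part of the saturated tilt; it is that part's residue field, with value group $\Delta_{p-}$. Moreover $\iota(\pi^{\flat})=\res_{w}\pi^{\flat\natural}$ has infinitesimal value rather than value $v(\pi)$. So the pointed structures do not line up, and a Hahn layer over $(k_{w},\bar v^{\ult})$ is not $(K^{\ult})^{\flat}$.

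Once the identification is established, you do not need Theorems~\ref{thm:first} and~\ref{thm:second} at all. Apply Theorem~\ref{thm:AKE-elementary_equivalence} directly to $(K^{\flat},\pi^{\flat})$ and $((K^{\ult})^{\flat},\pi^{\flat})$, using:
\begin{itemize}
\item the classical isomorphism $\mathcal{O}_{K^{\flat}}/\pi^{\flat}\cong\mathcal{O}_{v}/p$;
\item Łoś for $\mathcal{O}_{v}/p\preceq\mathcal{O}_{v^{\ult}}/p$;
\item $(vK,v(p))\preceq(v^{\ult}K^{\ult},v(p))$.
\end{itemize}
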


The previous theorem has the same computable sentence-to-sentence variant that one expects.
Consequently:

\begin{corollary}
	Let $T$ be any theory (not necessarily complete) extending $\TAT_{(0,p)}$.
	Then
	$\tilt{T}\mred T$.
    In particular, if $T$ is decidable then $\tilt{T}$ is also decidable.
\end{corollary}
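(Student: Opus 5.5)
The plan is to build the many-one reduction out of the computable sentence-by-sentence version of the tilting map from the previous theorem. I will treat that theorem as supplying a computable function $\varphi\mapsto\varphi^{\sharp}$ from $\Lval$-sentences to $\Lval$-sentences, with the following property. For every model $(K,v)$ of $\TAT_{(0,p)}$ (replaced by a sufficiently saturated elementary extension, so that $\tilt{K}$ is defined), we have
\begin{align*}
(K,v)\models\varphi^{\sharp}&\Longleftrightarrow(\tilt{K},\tilt{v})\models\varphi.
\end{align*}
Granting this, I set $\tilt{T}$ to be the set of sentences $\varphi$ such that $T\models\varphi^{\sharp}$. Then $\varphi\in\tilt{T}\Leftrightarrow\varphi^{\sharp}\in T$ (taking $T$ deductively closed), so $\tilt{T}=f^{-1}(T)$ for the computable $f:\varphi\mapsto\varphi^{\sharp}$, which is exactly $\tilt{T}\mred T$. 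Decidability of $\tilt{T}$ then follows at once from decidability of $T$.

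The substantive step is producing $\varphi\mapsto\varphi^{\sharp}$ computably. The first ingredient is Ketelsen's implication $(K,v)\equiv(L,w)\Rightarrow(\tilt{K},\tilt{v})\equiv(\tilt{L},\tilt{w})$. It shows that for each completion $T'$ of $\TAT_{(0,p)}$, the truth of $\varphi$ in $\tilt{K}$ depends only on $T'=\Th(K,v)$. Hence the set of $(K,v)\models\TAT_{(0,p)}$ with $\tilt{K}\models\varphi$ is a union of completions, and likewise for $\neg\varphi$. Both unions must be closed in the Stone space of completions. I expect the closedness of the $\neg\varphi$-class to come from an ultraproduct argument: tilting commutes with ultraproducts of saturated models up to elementary equivalence, which I would verify via Theorem~\ref{thm:AKE-relative_subcompleteness} using $\Ring{}/p$ and the value group. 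Both classes are then clopen, so by compactness there is a single sentence $\varphi^{\sharp}$ with $\TAT_{(0,p)}\models\varphi^{\sharp}\leftrightarrow$ ``$\tilt{K}\models\varphi$''.

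For computability, I will use that $\TAT_{(0,p)}$ is recursively axiomatized by Proposition~\ref{prp:elementary}. Enumerate all pairs $(\psi,\text{proof})$ until a $\psi$ is found that $\TAT_{(0,p)}$ provably makes equivalent to the statement that the tilt satisfies $\varphi$. That statement must itself be expressed uniformly: the tilt's data $\Ring{\tilt{v}}/\tilt{\pi}\cong\Ring{v}/p$ and the value group are interpretable in $(K,v)$. Via the AKE principles for almost tame fields (Theorems~\ref{thm:AKE-elementary_equivalence} and~\ref{thm:AKE-relative_subcompleteness}), the theory of the tilt is determined by these interpretable pieces. So I would write $\varphi$ as a computable Boolean combination of sentences about $\Ring{}/\pi$ and $(\Gamma,v(\pi))$ and pull those back through the interpretation. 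This proof search is the main obstacle: making the AKE reduction effective, not merely existential. I would first try to extract it from the recursive axiomatization together with the compactness argument, since the search then provably terminates.
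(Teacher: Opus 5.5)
Your argument matches the paper's, which only notes that Ketelsen's tilting theorem has a computable sentence-by-sentence form $f:\varphi\mapsto\varphi^{\sharp}$; then $\tilt{T}=f^{-1}(T)$, so $\tilt{T}\mred T$. Your sketch of why $f$ is computable goes beyond the paper and works, provided you run the proof search in a recursively axiomatized positive-characteristic theory of perfect pointed almost tame fields rather than in $\TAT_{(0,p)}$: there Theorem~\ref{thm:AKE-elementary_equivalence} plus compactness guarantees a provably equivalent Boolean combination of $\Ring{}/\pi$- and $(\Gamma,v(\pi))$-sentences, which you pull back along $\Ring{\tilt{v}}/\tilt{\pi}\cong\Ring{v}/p$ (this also makes the Stone-space step unnecessary).
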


% \subsection{Separable taming}

Jahnke and van der Schaaf have also extended the Jahnke--Kartas argument to account for ``separably taming'';
at the time of writing I understand this to be work in progress,
but it appears also in the Master's thesis \parencite{vanderSchaaf}.
More precisely, this replaces the theory of tame valued fields with the theory of separably tame valued fields.

\begin{definition}[{``Almost separably tame fields'', \cite{vanderSchaaf}}]
The class
$\CAST_{p}$
is the class of pointed valued fields $(K,v,\pi)$
of equal characteristic $p$
such that:
	\begin{enumerate}[{\bf(i)}]
		\item
			$(K,v)$ is henselian and deeply ramified
		\item
			The valuation ring $\Ring{v}[\pi^{-1}]$ is separably algebraically maximal.
\end{enumerate}
\end{definition}

Elements of $\CAST_{p}$ are to be viewed as $\Lvalpi$-structures.

\begin{theorem}[{\cite{vanderSchaaf}}]
The class $\CAST$ is an $\Lvalpi$-elementary class.
\end{theorem}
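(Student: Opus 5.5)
The plan is to follow the proof sketch of Proposition~\ref{prp:elementary} for $\CAT_{p}$, with separable algebraic maximality in place of algebraic maximality and deep ramification in place of semiperfectness of $\Ring{v}/p$. We work in $\Lvalpi$, so $\pi$ is named by a constant.

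The first step is to axiomatize condition (i). Henselianity is the scheme $\H$. For deep ramification, in equal characteristic $p$ the theorem of Kuhlmann--Rzepka quoted above (part (iii)) says it is equivalent to $K^{(p)}$ being dense in $(K,v)$. We will use the first-order form:
\begin{align*}
\forall x\;\forall \gamma\;\exists y\;\; v(x-y^{p})>\gamma,
\end{align*}
where $\gamma$ ranges over the value group sort (equivalently over $K^{\times}$ via $\bfval$). This is a single sentence.

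The main step is condition (ii), that $\Ring{v}[\pi^{-1}]$ is separably algebraically maximal. The valuation $v_{\pi+}$ with ring $\Ring{v}[\pi^{-1}]$ is already definable from $\pi$ in $\Lvalpi$: it is the coarsening whose value group is $vK$ modulo the largest convex subgroup not containing $v(\pi)$. The difficulty is that "no proper immediate separable algebraic extension" quantifies over extensions. I would imitate the criterion from Proposition~\ref{prp:elementary} and aim to prove the following equivalence, for $(K,v)$ henselian and deeply ramified. The valuation $v_{\pi+}$ is separably algebraically maximal if and only if, for every $n$ and every separable monic $f\in\Ring{v}[X]$ of degree $n$ with a root $a$ generating an extension that is immediate for $v_{\pi+}$, one has $v(\delta(a))\le v(\pi)$ and in fact $f$ has a root in $K$.

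To make this expressible, I would rephrase it through Kaplansky: $v_{\pi+}$ is separably algebraically maximal if and only if every separable polynomial has no pseudo-limit behaviour without a root. This is the known elementary characterization of separably algebraically maximal fields (Kuhlmann): for each separable $f$ of degree $n$ and each $b\in K$, if the ball $\{v_{\pi+}(f(x))>v_{\pi+}(f(b))\}$ behaves like a Newton approximation, then there is an $x$ with $v_{\pi+}(f(x)-0)$ maximal. Concretely:
\begin{align*}
\forall b\,\exists c\;\; v_{\pi+}(f(c))=\max\{v_{\pi+}(f(x)) \mid x\in K\},
\end{align*}
stated for coefficients and $b$ ranging over $K$, one sentence per degree $n$. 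Since $v_{\pi+}$ is $\Lvalpi$-definable, each sentence is first order.

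The hardest part is checking that this "attained maximum" scheme characterizes separable algebraic maximality for the coarsening $v_{\pi+}$, rather than only for $v$. I would handle it as follows. Over $v_{\pi+}$, separable algebraic maximality is equivalent to: $v_{\pi+}$ is henselian (inherited from $v$), and each separable polynomial attains a maximal value on $K$ (Kuhlmann's lemma, valid for any valued field). So the only work is transferring that lemma, which needs no hypothesis on $v$.

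Deep ramification is then used only to ensure that the class matches the intended one. I expect no genuine obstacle beyond citing Kuhlmann's characterization, with a possible need to restrict $f$ to separable polynomials by requiring that the discriminant be nonzero. That requirement is expressible coefficientwise.
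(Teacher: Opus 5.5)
\textbf{There is a genuine gap in condition (ii).} Your axiomatization of (i) is fine: in equal characteristic $p$, deep ramification is equivalent to density of $K^{(p)}$ in $K$, and that is a single $\Lval$-sentence.

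The problem is your step ``since $v_{\pi+}$ is $\Lvalpi$-definable, each sentence is first order''. That step is false. The ring $\Ring{v}[\pi^{-1}]=\bigcup_{n}\pi^{-n}\Ring{v}$ corresponds to the \emph{smallest} convex subgroup containing $v(\pi)$, namely $\{\gamma : |\gamma|\le n v(\pi)\text{ for some }n\in\NN\}$. (The largest convex subgroup avoiding $v(\pi)$, which you named, gives $v_{\pi-}$ instead.) This subgroup is only a countable union of definable sets.

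In an $\aleph_{1}$-saturated model it is not definable, even with parameters. Suppose $\psi(x,\bar b)$ defined $\Ring{v}[\pi^{-1}]$. Then the countable type $\{\psi(x,\bar b)\}\cup\{v(x)<-n v(\pi) : n\in\NN\}$ is finitely satisfiable, witnessed by $\pi^{-N-1}$. So it is realized, which is a contradiction. The same argument applies to $v_{\pi-}$.

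An elementary class is closed under ultrapowers, so no formula can define this coarsening uniformly across the class. Your ``attained maximum'' scheme for $v_{\pi+}$ is therefore really the infinitary condition $\forall \bar a\,\exists c\,\forall x\,\bigvee_{n} v(f(x))\le v(f(c))+n v(\pi)$, not a set of first-order sentences. Kuhlmann's characterization axiomatizes separable algebraic maximality of the \emph{named} valuation $v$. It says nothing first-order about an undefinable coarsening of $v$. Your intermediate equivalence, which quantifies over extensions that are immediate for $v_{\pi+}$, has the same defect.

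\textbf{How the paper avoids this.} This is exactly the difficulty the paper's proof addresses, and it is why deep ramification is essential rather than just fixing the intended class. The paper replaces the condition on $\Ring{v}[\pi^{-1}]$ by one stated purely in terms of $v$ and the constant $\pi$. For a henselian, deeply ramified $(K,v,\pi)$ of equal characteristic $p$, the criterion is: every finite separable unramified extension is generated by some $a$ with $0\le v(\delta(a))\le v(\pi)$.

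Because the bound is the fixed element $v(\pi)$, this criterion becomes one $\Lvalpi$-sentence per degree, by quantifying over the coefficients of polynomials. Proving that it is equivalent to separable algebraic maximality of $\Ring{v}[\pi^{-1}]$ is where henselianity and deep ramification are used. Any repair of your route must likewise replace the ``for some $n$'' by a bound expressed through $v$ and $v(\pi)$ alone. Obtaining such a bound is the real content of the theorem, not a formality.
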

\begin{proof}[Idea of the proof]
If $(K,v,\pi)$ is henselian and deeply ramified of equal characteristic $p$ then 
$\Ring{v}[\pi^{-1}]$ is separably defectless
if and only if
\begin{itemize}
\item
for every finite separable unramified extension $(L,w)/(K,v)$
there exists $a\in\Ring{v}$ such that $L=K(a)$
and $0\leq v(\delta(a))\leq v(\pi)$.
\end{itemize}
This property is clearly expressible as the conjunction of infinitely many $\Lvalpi$-sentences.
\end{proof}

\begin{theorem}[{\cite{vanderSchaaf}}]
The class $\CAST$ is an $\AKE$-class in the same senses as
Theorems~\ref{thm:AKE-relative_subcompleteness},~\ref{thm:AKE-elementary_substructure},~\ref{thm:AKE-elementary_equivalence}, and~\ref{thm:AKE-existential_closedness}.
\end{theorem}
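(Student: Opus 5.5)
The plan is to transpose the Jahnke--Kartas argument for $\CAT_{p}$ to the separable setting, with separably tame valued fields playing the role that tame valued fields played there. Concretely, we aim first for the analogue of Theorem~\ref{thm:AKE-relative_subcompleteness}, namely relative subcompleteness. Then Theorems~\ref{thm:AKE-elementary_substructure}, \ref{thm:AKE-elementary_equivalence} and~\ref{thm:AKE-existential_closedness} follow formally, exactly as in the almost tame case: take $K_{0}=K_{1}$, take $K_{0}=(\FF_{p}(\pi),v_{\pi},\pi)^{\hens}$, or use the analogous existential argument. In the separable setting we expect to add to the right-hand side the condition on the (elementary) imperfection degree, and to work in $\Lvalpi$ expanded by the $\Llambda$-symbols. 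Since $\CAST$ is elementary by the preceding theorem, the Keisler--Shelah reduction is available.

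For the relative subcompleteness statement, we pass to ultrapowers with respect to a common ultrafilter so that $K_{0}$ is $\aleph_{1}$-saturated. We may then assume there are isomorphisms $\mathcal{O}_{1}/\pi\to\mathcal{O}_{2}/\pi$ over $\mathcal{O}_{0}/\pi$ and $\Gamma_{1}\to\Gamma_{2}$ over $\Gamma_{0}$. Let $w_{i}$ be the coarsest coarsening of $v_{i}$ with $w_{i}(\pi)>0$ (the standard decomposition of Remark~\ref{rem:standard_decomposition}). By the Fact on saturated pointed fields, the induced rank~$1$ valuation $\bar v_{i}$ on $k_{w_{i}}$ is complete. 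Deep ramification gives that $k_{w_{i}}$ is perfect, and in fact $(k_{w_{i}},\bar v_{i})$ is perfectoid of characteristic~$p$. As in the sketch for Theorem~\ref{thm:AKE-relative_subcompleteness}, we use $\mathcal{O}_{\bar v_{i}}=(\mathcal{O}_{v_{i}}/\pi)_{\red}$ to lift the residue-ring isomorphism to a valued-field isomorphism $(k_{w_{1}},\bar v_{1})\cong(k_{w_{2}},\bar v_{2})$ over $k_{w_{0}}$. Here perfectness makes the reduced quotient determine the complete rank~$1$ valuation ring. Similarly we obtain $\Gamma_{w_{1}}\cong\Gamma_{w_{2}}$ over $\Gamma_{w_{0}}$, and the quotient $\Gamma_{w_{1}}/\Gamma_{w_{0}}$ is torsion-free because $\Gamma_{0}\preceq_{\exists}\Gamma_{1}$.

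Next we check that $(K_{i},w_{i})$ is separably tame. It is henselian, since it is a coarsening of a henselian valuation. Its value group is divisible, using saturation and $p$-divisibility, or at least $p$-divisibility suffices. Its residue field $k_{w_{i}}$ is perfect. Finally, $\mathcal{O}_{v}[\pi^{-1}]=\mathcal{O}_{w}$ being separably algebraically maximal, together with perfect residue field and $p$-divisible value group, yields separable defectlessness. Thus we are facing an embedding problem for separably tame valued fields of equal characteristic $p$, with separable residue extension $k_{w_{1}}/k_{w_{0}}$ and torsion-free value group extension. We then invoke the embedding lemma underlying Theorem~\ref{thm:STVFe}, in the form of \cite{KuhlmannPal,Anscombe_lambda}, with the assumption on imperfection degrees. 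This should produce an $\Lvlambda$-embedding of $(K_{1},w_{1})$ into an elementary extension of $(K_{2},w_{2})$ over $K_{0}$ that induces the given maps. The embedding is compatible with the finer valuations $v_{i}$, because these are recovered by composing $w_{i}$ with $\bar v_{i}$, and the residue map was chosen to respect $\bar v_{i}$. A back-and-forth then gives $K_{1}\equiv_{K_{0}}K_{2}$.

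The main obstacle is the separability bookkeeping. Unlike the perfect case, $K_{1}$ must be a separable extension of $K_{0}$, and the image of the embedding must have $K_{2}$ separable over it. Otherwise Theorem~\ref{thm:STVFe} cannot be applied, which is precisely why the $\Llambda$-functions and the imperfection degree condition are needed. A secondary point is to check that separable algebraic maximality of $\mathcal{O}_{v}[\pi^{-1}]$, rather than full algebraic maximality, is enough to make $w$ separably defectless in the saturated model. For this step we would adapt the different-based characterization from the proof that $\CAST$ is elementary.
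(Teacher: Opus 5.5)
Your overall route matches the paper's: rerun the $\CAT$ argument with $\STVF$ and Theorem~\ref{thm:STVFe} in place of $\TVF$. However, the key step, that $(K_i,w_i)$ is separably tame, is not established, because you conflate the two coarsenings of the standard decomposition.

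You take $w_i$ to be the coarsest coarsening with $w_i(\pi)>0$, i.e.\ $(v_i)_{\pi-}$. This is the choice for which $\Ring{\bar v_i}=(\Ring{v_i}/\pi)_{\red}$ holds, since $\sqrt{\pi\Ring{v}}=\Ideal{v_{\pi-}}$. But you then derive tameness from $\Ring{v}[\pi^{-1}]=\Ring{w}$, which is false: $\pi^{-1}\in\Ring{v}[\pi^{-1}]$ while $w(\pi^{-1})<0$. In fact $\Ring{v}[\pi^{-1}]=\Ring{v_{\pi+}}$ is a proper coarsening of $w$. So hypothesis (ii) controls only $v_{\pi+}$, and says nothing about the rank-$1$ valued field $(Kv_{\pi+},\bar v_{\pi-})$ between $v_{\pi+}$ and $w$. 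That rank-$1$ piece is exactly where the defect of deeply ramified fields lives. For example, $K=\widehat{\ps{\FF_{p}}^{\perf}}$ with $\pi=t$ lies in $\CAST$, since there $\Ring{v}[\pi^{-1}]=K$ is trivially valued. Yet $w=v$ has the immediate separable defect extension given by $X^{p}-X-t^{-1}$. Your inference (``separably algebraically maximal, perfect residue field, $p$-divisible value group, hence $w_i$ separably defectless'') never uses saturation, and this example shows it fails without it.

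The missing idea is how saturation enters.
\begin{enumerate}
\item By the Fact on $\aleph_1$-saturated pointed fields, $(K_i(v_i)_{\pi+},\bar v_{\pi-})$ is spherically complete, hence defectless.
\item $(K_i,(v_i)_{\pi+})$ is separably tame by (ii): saturation makes $(v_i)_{\pi+}$ nontrivial, and deep ramification then makes $K_i(v_i)_{\pi+}$ perfect.
\item Multiplying the two fundamental equalities shows that the composite $w_i$ is separably defectless. Since its residue field $k_{w_i}$ is perfect and its value group is $p$-divisible, $w_i$ is separably tame.
\end{enumerate}
You do cite the Fact, but you attach it to the wrong field. The valuation $\bar v_i$ on $k_{w_i}$ has value group $\Delta_{\pi-}$, which does not contain $v(\pi)$, so $(k_{w_i},\bar v_i)$ is neither the rank-$1$ piece nor perfectoid. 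Completeness alone would not help anyway, since the example above is complete; what is needed is spherical completeness.

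Alternatively, you could take $\Ring{w_i}=\Ring{v_i}[\pi^{-1}]$, and then your tameness argument is correct. But in that case $(\Ring{v_i}/\pi)_{\red}$ is not $\Ring{\bar v_i}$, because the reduction forgets the rank-$1$ piece. You would then have to lift the isomorphism by a tilting-type argument. Here $\Ring{\bar v_i}$ is perfect and, by the Fact, $\pi$-adically complete, and $\Ring{\bar v_i}/\pi=\Ring{v_i}/\pi$. So $\Ring{\bar v_i}\cong\varprojlim_{x\mapsto x^{p}}\Ring{v_i}/\pi$ functorially. As written, the proposal takes the easy half of each option and omits the step that does the work.

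A smaller point: $(\FF_{p}(\pi),v_{\pi},\pi)^{\hens}$ has value group $\ZZ$, so it is not deeply ramified and does not lie in $\CAST$. It therefore cannot serve as the base $K_0$ in your relative subcompleteness statement, and the reduction to the elementary-equivalence version is not formal with that choice.
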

The proof follows the same path as the theorems for $\CAT$, but replacing $\TVF$ by $\STVF$ and applying Theorem~\ref{thm:STVFe}.

\begin{corollary}
For any perfect field $k$, we have
$k(t)^{h,\mathrm{AS}}\preceq\ps{k}^{\mathrm{AS}}$,
where the latter is the Artin--Schreier closure of $\ps{k}$ and the former is the relative algebraic closure of $k(t)$ in the latter.
\end{corollary}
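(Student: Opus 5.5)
The plan is to deduce the statement from the $\AKE^{\preceq}$ principle for $\CAST$, the separable analogue of Theorem~\ref{thm:AKE-elementary_substructure} obtained in \cite{vanderSchaaf}. I will use it in the form where both value groups are regularly dense, so that only the residue rings modulo $\pi$ need to be compared.

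First, take $\pi=t$ and consider the pointed valued fields $K_{1}=(k(t)^{h,\mathrm{AS}},v_{t},t)\subseteq K_{2}=(\ps{k}^{\mathrm{AS}},v_{t},t)$. The valuation on the Artin--Schreier closure is the unique extension of the henselian valuation $v_{t}$, and $K_{1}$ carries the restriction. The first task is to verify that both fields lie in $\CAST_{p}$.

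\emph{Henselianity.} Each $K_{i}$ is algebraic over a henselian field, hence henselian.

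\emph{Deep ramification.} Adjoining Artin--Schreier roots of $t^{-p^{n}m}$-type elements makes the value group $p$-divisible. Adjoining roots of $x^{p}-x-a$ for all $a$ gives density of $K^{(p)}$ in $K$. Concretely, every element is congruent modulo $K^{(p)}$ to an element of arbitrarily high value, so the completion is perfect and the Kuhlmann--Rzepka criterion applies.

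\emph{Separable algebraic maximality.} Since $t$ has rank-one value, $\Ring{v}[t^{-1}]$ is the trivial valuation ring $K$ itself. The condition is therefore automatic, or at worst reduces to the rank-one case. The same holds for the value groups: both are of rank one and $p$-divisible, and both are dense (they contain $\frac{1}{p^{n}}\ZZ$ and are contained in $\QQ$). Dense archimedean groups are regularly dense by \cite{RobinsonZakon}.

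Second, by the regularly dense version of the $\AKE^{\preceq}$ principle, it then suffices to show
\begin{align*}
\Ring{K_{1}}/t\preceq\Ring{K_{2}}/t
\end{align*}
in $\Lring$. Both rings have the same elements modulo $t$, because $K_{1}$ is dense in $K_{2}$. This density holds because $k(t)^{h}$ is dense in $\ps{k}$, and Artin--Schreier roots over $\ps{k}$ can be approximated by roots of nearby polynomials over $k(t)^{h}$ by Krasner's lemma, since AS extensions are separable. Hence the inclusion induces an isomorphism $\Ring{K_{1}}/t\cong\Ring{K_{2}}/t$, which is trivially elementary. The conclusion $K_{1}\preceq K_{2}$ then follows.

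The main obstacle is the deep ramification check, specifically the density of $K^{(p)}$ and the $p$-divisibility of the value group in the Artin--Schreier closure. The value-group part is easy: a root of $x^{p}-x=t^{-1}$ has value $-1/p$. For density, I would first show that for any $a$, either $a$ or a suitable correction by $\wp$-images lies close to a $p$-th power. Since $\wp(y)=y^{p}-y$ is surjective on the closure, we have $a=y^{p}-y$, so $a\equiv -y$ modulo $K^{(p)}$; iterating this on $y$ should push the remainder to arbitrarily high value, provided the values improve at each step. That iteration is exactly what must be verified carefully.

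A secondary point is the density of $K_{1}$ in $K_{2}$ via Krasner's lemma, which needs uniform control of root separation for the AS polynomials. This works because distinct roots of $x^{p}-x-a$ differ by elements of $\FF_{p}^{\times}$, so they have value $0$, and the Krasner radius is bounded below.
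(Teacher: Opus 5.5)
Your overall strategy is the intended one. It is the $\CAST$ analogue of Corollary~\ref{cor:AKE}{\bf(i)}: take $\pi=t$, check that both fields lie in $\CAST_p$, note that the value groups are dense archimedean (hence regularly dense), use density of $K_1$ in $K_2$ to get $\mathcal{O}_{K_1}/t\cong\mathcal{O}_{K_2}/t$, and apply the regularly dense form of $\AKE^{\preceq}$. Henselianity is fine, since $k(t)^h\subseteq K_1$. The rank-one observation that $\mathcal{O}_v[t^{-1}]=K$ is trivially separably algebraically maximal is fine. The density step is fine provided you run it by induction along finite Artin--Schreier towers, approximating each coefficient $a$ from the dense subfield built at the previous stage rather than only from $k(t)^h$.

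However, the deep-ramification step, which you rightly flag as the crux, fails as proposed. Writing $a=y^p-y$ and passing to the remainder $-y$ never produces remainders of positive value. If $v(a)<0$ then $v(y)=v(a)/p$, so the successive remainders have values $v(a)/p^n$, which increase to $0$ but stay negative. If $v(a)\ge 0$ then every root satisfies $v(y)\le v(a)$, so there is no gain at all. The missing idea is to rescale before applying $\wp$. Given $\gamma$, choose $d$ (say a power of $t$) with $v(d)$ so large that $v(a)<p\,v(d)$ and $v(a)/p+(p-1)v(d)>\gamma$, and let $y$ be a root of $X^p-X-a/d^p$. Then $v(y)=v(a)/p-v(d)$ and $a=(dy)^p-d^py$, so $v\bigl(a-(dy)^p\bigr)=v(a)/p+(p-1)v(d)>\gamma$. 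Hence every nontrivially valued Artin--Schreier closed field of characteristic $p$ is dense in its perfect hull. It is therefore deeply ramified with $p$-divisible value group.

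This applies to $K_2$. It also applies to $K_1$ once you note that $K_1$ is itself Artin--Schreier closed: a root of $X^p-X-a$ with $a\in K_1$ lies in $K_2$ and is algebraic over $k(t)$, so it lies in $K_1$. Alternatively, $K_1$ has the same completion as $K_2$, which is perfect. Finally, suppose the $\CAST$ version of $\AKE^{\preceq}$ carries the separability hypothesis of Theorem~\ref{thm:STVFe}. Then you should also record that $K_2/K_1$ is separable and that both fields have imperfection degree $1$ with $p$-basis $t$. This holds because $K_1$ is separable algebraic over $k(t)$ and $K_2$ is separable algebraic over $\ps{k}$.
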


% \subsection{Kartas' $C_{i}$ transfer}
% A general perspective,
% in fragments and uniformly
% \cite{Kartas_perfectoid-Ci}
% \subsection{Perfectoid fields in continuous logic}
% Rideau-Kikuchi--Scanlon--Simon
% Questions for the future
% \subsection{Almost mathematics}
% \cite{FontaineWintenberger}
% \cite{GabberRamero}

%%%%%%%%%%%%%%%%%%%%%%%%%%%%%%%%%%%%%%%%%%%%%%%%%
% \clearpage
%%%%%%%%%%%%%%%%%%%%%%%%%%%%%%%%%%%%%%%%%%%%%%%%%

% \vfill

\section*{Acknowledgements}

The author would like to thank Franziska Jahnke, Konstantinos Kartas, Margarete Ketelsen, and Silvain Rideau-Kikuchi for comments on an earlier version.
Sincere thanks are due to an anonymous referee whose careful feedback enriched the manuscript greatly.
This work may look like a survey, but it is far from comprehensive.
Nevertheless many errors, omissions, and simplifications remain
--- for all of these the author has sole responsibility.

During the preparation of this manuscript the author 
was supported by
the ANR-DFG project ``AKE-PACT'' (ANR-24-CE92-0082)
and
by ``Investissement d'Avenir'' launched by the French Government and implemented by ANR (ANR-18-IdEx-0001) as part of its program ``Emergence''.

%%%%%%%%%%%%%%%%%%%%%%%%%%%%%%%%%%%%%%%%%%%%%%
%%%%%%%%%%%%%%%%%%%%%%%%%%%%%%%%%%%%%%%%%%%%%%
\printbibliography
%%%%%%%%%%%%%%%%%%%%%%%%%%%%%%%%%%%%%%%%%%%%%%
%%%%%%%%%%%%%%%%%%%%%%%%%%%%%%%%%%%%%%%%%%%%%%
\end{document}